\patchcmd{\ttlh@hang}{\parindent\z@}{\parindent\z@\leavevmode}{}{}
\patchcmd{\ttlh@hang}{\noindent}{}{}{}
\newcommand\numberthis{\addtocounter{equation}{1}\tag{\theequation}}
\theoremstyle{plain}
\newtheorem{theorem}{Theorem}[section]
\newtheorem{lemma}[theorem]{Lemma}
\newtheorem{proposition}[theorem]{Proposition}
\newtheorem{corollary}[theorem]{Corollary}
\providecommand{\customgenericname}{}
\newcommand{\newcustomtheorem}[2]{%
  \newenvironment{#1}[1]
  {%
   \renewcommand\customgenericname{#2}%
   \renewcommand\theinnercustomgeneric{##1}%
   \innercustomgeneric
  }
  {\endinnercustomgeneric}
}
\def\XXint#1#2#3{{\setbox0=\hbox{$#1{#2#3}{\int}$ }
\vcenter{\hbox{$#2#3$ }}\kern-.6\wd0}}
\theoremstyle{definition}
\theoremstyle{remark}
\newtheorem{remark}[theorem]{Remark}
\numberwithin{equation}{section}
\DeclareMathOperator*{\supp}{supp}
\DeclareMathOperator*{\scale}{scale}
\DeclareMathOperator*{\diam}{diam}
\DeclareMathOperator*{\dist}{dist}
\newcommand{\Schwartz}{\mathcal{S}}
\newcommand{\SP}{\mathcal{S}'  / \mathcal{P} }
\newcommand{\TLgeneral}[3]{\dot{\mathbf{F}}_{#1,#2}^{#3}}
\newcommand{\TLzero}{\dot{\mathbf{F}}^{0}_{p,q}}
\newcommand{\TLone}{\dot{\mathbf{F}}^{\alpha}_{p_1,q_1}}
\newcommand{\TLtwo}{\dot{\mathbf{F}}^{\beta}_{p_2,q_2}}
\newcommand{\TLnaked}{\dot{\mathbf{F}}}
\newcommand{\TL}{\dot{\mathbf{F}}^{\alpha}_{p,q}}
\newcommand{\TLi}{\dot{\mathbf{F}}^{\alpha}_{\infty,q}}
\newcommand{\TLii}{\dot{\mathbf{F}}^{\alpha}_{\infty,\infty}}
\NewDocumentCommand\TLseq{O{\alpha}}{\dot{\mathbf{f}}^{#1}_{p,q}}
\NewDocumentCommand\PT{O{\alpha}D<>{\beta}}{\dot{\mathbf{P}}^{#1, #2}_{p,q}}
\NewDocumentCommand\PTi{O{\alpha}D<>{\beta}}{\dot{\mathbf{P}}^{#1, #2}_{\infty,q}}
\NewDocumentCommand\PTii{O{\alpha}D<>{\beta}}{\dot{\mathbf{P}}^{#1, #2}_{\infty,\infty}}
\newcommand{\TLAii}{\dot{\mathbf{F}}^{\alpha}_{\infty, \infty}(A)}
\newcommand{\BAii}{\dot{\mathbf{B}}^{\alpha}_{\infty, \infty}(A)}
\NewDocumentCommand\PTseq{O{\alpha}D<>{\beta}}{\dot{\mathbf{p}}^{#1, #2}_{\infty,q}}
\newcommand{\vertiii}[1]{{\left\vert \kern-0.25ex
                            \left\vert \kern-0.25ex
                              \left\vert #1\right\vert\kern-0.25ex
                            \right\vert \kern-0.25ex
                          \right\vert}}
\NewDocumentCommand\DoubleStar{O{\varphi}m}{#1_{#2,\beta}^{\ast\ast}}
\renewcommand{\emptyset}{\varnothing}
\newcommand{\CalP}{\mathcal{P}}
\newcommand{\CalD}{\mathcal{D}}
\newcommand{\Measure}{\mathrm{m}}
\newcommand{\Lebesgue}[1]{\Measure \left( #1 \right)}
\newcommand{\GL}{\operatorname{GL}}
\newcommand{\Fourier}{\mathcal{F}}
\DeclareFontFamily{U}{mathx}{\hyphenchar\font45}
\DeclareFontShape{U}{mathx}{m}{n}{
      <5> <6> <7> <8> <9> <10>
      <10.95> <12> <14.4> <17.28> <20.74> <24.88>
      mathx10
      }{}
\DeclareSymbolFont{mathx}{U}{mathx}{m}{n}
\DeclareMathAccent{\widecheck}{0}{mathx}{"71}
\DeclareMathAccent{\wideparen}{0}{mathx}{"75}
\newcommand{\R}{\mathbb{R}}
\newcommand{\CC}{\mathbb{C}}
\newcommand{\N}{\mathbb{N}}
\newcommand{\Z}{\mathbb{Z}}
\newcommand{\EE}{\mathbb{E}}
\newcommand{\PP}{\mathbb{P}}
\title{Classification of anisotropic Triebel-Lizorkin spaces}
\author[S. Koppensteiner]{Sarah Koppensteiner}
\address{Faculty of Mathematics,
University of Vienna,
Oskar-Morgenstern-Platz 1,
A-1090 Vienna, Austria}
\email{sarah.koppensteiner@univie.ac.at}
\author[J.T. van Velthoven]{Jordy Timo van Velthoven}
\address{Delft University of Technology,
Mekelweg 4, Building 36,
2628 CD Delft, The Netherlands.}
\email{j.t.vanvelthoven@tudelft.nl}
\author[F. Voigtlaender]{Felix Voigtlaender}
\address{
Mathematical Institute for Machine Learning and Data Science (MIDS),
Catholic University of Eichstätt-Ingolstadt (KU),
Auf der Schanz 49,
85049 Ingolstadt,
Germany.
}
\email{felix.voigtlaender@ku.de}
\subjclass[2020]{42B25, 42B35, 46E35}
\keywords{Anisotropic Triebel-Lizorkin spaces, Expansive matrix, Homogeneous quasi-norm.}
\begin{document}

\begin{abstract}
This paper provides a classification theorem for expansive matrices $A \in \mathrm{GL}(d, \mathbb{R})$
generating the same anisotropic homogeneous Triebel-Lizorkin space $\TL(A)$ for $\alpha \in \mathbb{R}$
and $p,q \in (0,\infty]$.
It is shown that $\TL(A) = \TL(B)$ if and only if the homogeneous quasi-norms $\rho_A, \rho_B$
associated to the matrices $A, B$ are equivalent,
except for the case $\dot{\mathbf{F}}^0_{p, 2} = L^p$ with $p \in (1,\infty)$.
The obtained results complement and extend the classification of anisotropic Hardy spaces
$H^p(A) = \dot{\mathbf{F}}^{0}_{p,2}(A)$, $p \in (0,1]$, in [Mem. Am. Math. Soc. 781, 122 p. (2003)].
\end{abstract}

\maketitle

\section{Introduction}

Let $A \in \mathrm{GL}(d, \mathbb{R})$ be an expansive matrix and consider an analyzing vector
$\varphi \in \mathcal{S}(\mathbb{R}^d)$ for $A$, that is, a Schwartz function
$\varphi : \mathbb{R}^d \to \mathbb{C}$ with Fourier transform
$\widehat{\varphi} \in C_c^{\infty} (\mathbb{R}^d \setminus \{0\})$ satisfying
\begin{align*}
 \sup_{i \in \mathbb{Z}} |\widehat{\varphi} ((A^*)^i \xi )| > 0
 \quad \text{for all} \quad \xi \in \mathbb{R}^d \setminus \{0\},
\end{align*}
where $A^*$ denotes the transpose of $A$.
Denote its $L^1$-normalized dilation by $\varphi_i \!:=\! |\det A|^i \varphi (A^i \cdot)$
for $i \in \mathbb{Z}$.
For $\alpha \in \mathbb{R}$ and $p, q \in (0,\infty]$,
the associated anisotropic homogeneous Triebel-Lizorkin space $\TL(A)$ on $\mathbb{R}^d$
is defined to consist of all tempered distributions $f \in \mathcal{S}' (\mathbb{R}^d)$
(modulo polynomials) with finite quasi-norm $\| f \|_{\TL(A)}$, defined by
\[
  \| f \|_{\TL(A)}
  = \bigg\|
      \bigg(
        \sum_{i \in \mathbb{Z}}
          (|\det A|^{\alpha i} \, |f \ast \varphi_i |)^q
      \bigg)^{1/q}
    \bigg\|_{L^p},
  \quad p \in (0,\infty),
\]
with the usual modifications for $q = \infty$, and
\[
 \| f \|_{\TLi(A)}
  = \sup_{\ell \in \mathbb{Z}, k \in \mathbb{Z}^d}
       \bigg(
         \frac{1}{|\det A|^{\ell}}
         \int_{A^{\ell}([0,1]^d + k)}
           \sum_{i = -\ell}^{\infty}
             (|\det A|^{\alpha i} \, |(f \ast \varphi_i)(x)|)^q \;
         d x
       \bigg)^{1/q},
\]
and $\| f \|_{\TLii(A)} = \sup_{i \in \mathbb{Z}} |\det A|^{\alpha i} \| f \ast \varphi_i \|_{L^{\infty}}$.

For the scalar dilation matrix $A = 2 \cdot I_d$, the spaces $\TL(A)$ defined above
coincide with the usual homogeneous Triebel-Lizorkin spaces on $\mathbb{R}^d$ as studied in,
e.g., \cite{triebel2010theory, frazier1991littlewood, frazier1990discrete}.
For this particular case, the Triebel-Lizorkin spaces provide a unifying scale
of function spaces that encompasses, among others, the Lebesgue, Sobolev, Hardy and BMO spaces.
The anisotropic Triebel-Lizorkin spaces $\TL(A)$ associated to a general expansive matrix $A$
were first introduced in \cite{bownik2006atomic} and further studied in,
e.g., \cite{bownik2008duality, bownik2007anisotropic, koppensteiner2022anisotropic2, KvVV2021anisotropic, cabrelli2013non, li2014mean, benyi2010anisotropic}.
These anisotropic spaces are useful for the analysis of mixed homogeneity properties of functions
and operators as the dilation structure allows different directions to be scaled
by different dilation factors.
Among others, the anisotropic Triebel-Lizorkin spaces include Lebesgue spaces
and various anisotropic/parabolic versions of Hardy and BMO spaces as studied in,
e.g., \cite{bownik2003anisotropic, folland1982hardy, calderon1977parabolic, calderon1975parabolic, calderon1977atomic, bownik2020pde}.
See these papers (and the references therein) for further motivation
for considering anisotropic function spaces.

In the present paper, the main objective is to characterize when two expansive matrices
induce the same anisotropic Triebel-Lizorkin space.
For the special case of anisotropic Hardy spaces $H^p(A)$ ($= \dot{\mathbf{F}}^0_{p,2} (A)$) with $p \in (0,1]$,
a full solution to this problem has been obtained in \cite{bownik2003anisotropic}.
Explicitly, it is shown in \cite[Section~10]{bownik2003anisotropic} that $H^p(A) = H^p(B)$
for some (equivalently, all) $p \in (0,1]$ if and only if two homogeneous quasi-norms
$\rho_A, \rho_B : \mathbb{R}^d \to [0,\infty)$ associated to the expansive matrices $A, B$
are equivalent, in the usual sense of quasi-norms.
See also \cite{bownik2020pde} for a slightly corrected version and
\cite{dekel2011hardy} for an extension of the classification result
of \cite{bownik2003anisotropic} to Hardy spaces with variable anisotropy.
Analogous to these results on Hardy spaces, a classification of anisotropic Besov spaces
\cite{bownik2005atomic} has more recently been obtained in \cite{cheshmavar2020classification}.
The aim of this paper is to provide a complementary characterization
for the scale of Triebel-Lizorkin spaces.

\subsection{Main results}

The first key result obtained in this paper is the following rigidity theorem.
Here, as well as below, two expansive matrices $A$ and $B$ are called \emph{equivalent}
if they have equivalent homogeneous quasi-norms;
see \Cref{sec:expansive,sec:QuasiNorms} for precise definitions.

\begin{theorem}\label{thm:intro}
  Let $A, B \in \mathrm{GL}(d, \mathbb{R})$ be expansive,
  $\alpha, \beta \in \mathbb{R}$ and $p_1, p_2, q_1, q_2 \in (0, \infty]$.

  If $\TLone(A) = \TLtwo(B)$, then $(p, q, \alpha) := (p_1, q_1, \alpha) = (p_2, q_2, \beta)$.
  Furthermore, at least one of the following conditions hold:
  \begin{enumerate}[(i)]
   \item $A$ and $B$ are equivalent, or
   \item  $\alpha = 0$, $p \in (1, \infty)$ and $q = 2$.
  \end{enumerate}
\end{theorem}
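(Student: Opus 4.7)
The plan is to reduce \Cref{thm:intro} to two essentially independent rigidity statements and tackle them in turn: rigidity of the indices $(\alpha, p, q)$, and rigidity of the anisotropy up to the exceptional $L^p$ range.

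First I would upgrade the mere set equality $\TLone(A) = \TLtwo(B)$ to an equivalence of quasi-norms by invoking the closed graph theorem for the identity map between the two complete quasi-Banach spaces, using that both are continuously embedded in the metrizable space $\mathcal{S}'/\mathcal{P}$. With an equivalence of quasi-norms in hand, I would test it on carefully chosen families to pin down each index. To obtain $\alpha = \beta$ and $p_1 = p_2$, take a band-limited Schwartz function $\psi$ with $\widehat{\psi}$ supported in a single $A$-shell; the $\TL$-quasi-norm of $\psi(A^j\cdot)$ scales as an explicit power of $|\det A|^j$ determined by $(\alpha, p)$, and matching this against the $B$-side (where for $|j|$ large the spectrum concentrates on finitely many $B$-shells) forces the equalities. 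To force $q_1 = q_2$, I would use the standard lacunary construction: for a sequence $(\varepsilon_j) \in \ell^{q_1} \setminus \ell^{q_2}$, build $f = \sum_j \varepsilon_j \psi_j$ with $\widehat{\psi_j}$ in well-separated $A$-shells; the intrinsic $\ell^q$-character of the $\TL$-quasi-norm then places $f$ in $\TLone(A) \setminus \TLtwo(B)$, a contradiction.

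With matched indices and $\TL(A) \cong \TL(B)$, the second task is to show $A \sim B$ outside the exceptional range. The strategy is modeled on Bownik's treatment of Hardy spaces (\cite{bownik2003anisotropic}): use the anisotropic atomic or molecular decomposition of $\TL(A)$ to probe the $B$-geometry. For a fixed $A$-dilated cube $Q = A^\ell Q_0$, construct a normalized atom $a_Q$ with $\|a_Q\|_{\TL(A)} \lesssim 1$ and then bound $\|a_Q\|_{\TL(B)}$ from below in terms of the $B$-scale of $\supp a_Q$. The equivalence of quasi-norms forces a uniform upper bound on the latter, which translates into an inclusion $B_A(0,r) \subset B_B(0, Cr)$ for a universal constant $C$. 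Reversing the roles of $A$ and $B$ yields $\rho_A \asymp \rho_B$, i.e.\ equivalence of the anisotropies.

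The main obstacle lies in the second task. The exceptional case $\alpha = 0$, $p \in (1,\infty)$, $q = 2$ is genuinely unavoidable because $\TL(A) = L^p$ with equivalent norms for every expansive $A$, and the probe strategy necessarily collapses there. Outside this range, the technical heart of the proof is identifying, for each admissible $(\alpha, p, q)$, an atomic probe whose $\TL$-quasi-norm actually encodes the $A$-anisotropy. For $p \le 1$ the Hardy-space atoms of \cite{bownik2003anisotropic} provide this fingerprint directly. For $\alpha \ne 0$ the smoothing order already distinguishes $A$- and $B$-scales via Peetre-type maximal characterizations. The delicate subcase is $\alpha = 0$, $p > 1$, $q \ne 2$, where neither Hardy atoms nor derivative order help and one must exploit the nontrivial $\ell^q$-structure of the square function specifically to detect the $A$-anisotropy; this is where I expect the bulk of the technical work to lie.
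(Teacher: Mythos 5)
Your opening moves match the paper: the closed-graph upgrade to a quasi-norm equivalence (\Cref{lem:CoincidenceImpliesNormEquivalence}), and the use of band-limited probes supported in finitely many Fourier shells to read off the indices (\Cref{prop:TL-norm-estimate-simple-Fourier-support,prop:TL-norm-estimate-sequence}). There is, however, a genuine gap at exactly the place you flag as the crux.

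For the case $\alpha = 0$, $p \in (1,\infty)$, $q \neq 2$ you write that one ``must exploit the nontrivial $\ell^q$-structure of the square function to detect the $A$-anisotropy'' and that you ``expect the bulk of the technical work'' there, but you do not say how. The paper's resolution is a Khintchine-inequality argument (\Cref{lem:KhintchineArgument}): if $A$ and $B$ are \emph{not} equivalent, then by \Cref{lem:hom_covers} one of the shell-overlap counts is unbounded, so one can place $K$ pairwise well-separated $A$-shells inside a \emph{single} $B$-shell. A test function $f_\theta = \sum_k \theta_k c_k M_{\eta_k}\phi_\delta$ then has $\|f_\theta\|_{\TLzero(A)} \asymp \delta^{d(1-1/p)}\|c\|_{\ell^q}$ by \Cref{prop:TL-norm-estimate-sequence} (nontrivial $\ell^q$ on the $A$-side), while $\|f_\theta\|_{\TLzero(B)} \asymp \delta^{d(1-1/p)}\|f_\theta\|_{L^p}$ by \Cref{prop:TL-norm-estimate-simple-Fourier-support} (single $B$-shell). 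Averaging over random signs $\theta$ via Khintchine forces $\|c\|_{\ell^q} \asymp \|c\|_{\ell^2}$ uniformly in $K$, hence $q = 2$. This is the contrapositive that makes the theorem close; your proposal has no substitute for it. Relatedly, your atomic-probe strategy for showing $\rho_A \asymp \rho_B$ is vague and would indeed collapse in this range, as you note; the paper instead never reconstructs ball inclusions directly, but works entirely through the overlap criterion of \Cref{lem:hom_covers} applied to the Fourier-side covers $((A^\ast)^i Q)$ and $((B^\ast)^j P)$, and for $\alpha \neq 0$ combines \Cref{cor:detA-equiv-detB} with \Cref{lem:IjJiInclusions-with-alpha-beta} to bound the overlaps.

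Two smaller issues. First, your lacunary construction for $q_1 = q_2$ must choose shells that are simultaneously well-separated for $A$ \emph{and} for $B$ (the paper arranges $\eta_k \in (A^\ast)^{i_k}Q \cap (B^\ast)^{j_k}P$ with both $(i_k)$ and $(j_k)$ $2N$-separated); without this, the $\TL(B)$-norm of your $f$ need not reflect $\ell^{q_2}$. Second, your proposal does not address $p = \infty$ at all, which in the paper requires a separate duality reduction to the $p = 1$ case (\Cref{lem:PInftyMainArgument}, using \cite{bownik2008duality} and a dual norm characterization of $\dot{\mathbf{F}}^0_{1,\infty}$).
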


\Cref{thm:intro} shows, in particular, that equivalence of two expansive matrices is necessary
for the coincidence of the associated spaces, unless $\alpha = 0$, $p \in (1, \infty)$ and $q = 2$.
That this conclusion might fail for the space $\dot{\mathbf{F}}^0_{p, 2} (A)$ with $p \in (1,\infty)$
is easily explained, namely $\dot{\mathbf{F}}^0_{p, 2} (A)$ can be canonically identified
with the Lebesgue space $L^p$ for $p\in(1,\infty)$, see, e.g., \cite{bownik2007anisotropic, bownik2003anisotropic}.

The following theorem provides a converse to \Cref{thm:intro}.

\begin{theorem} \label{thm:intro2}
  If $A, B \in \mathrm{GL}(d, \mathbb{R})$ are equivalent expansive matrices,
  then $\TL(A) = \TL(B)$ for all $\alpha \in \mathbb{R}$ and $p,q \in (0,\infty]$.
\end{theorem}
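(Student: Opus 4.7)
The plan is to identify a description of the $\TL(A)$-norm that depends on the matrix $A$ only through the homogeneous quasi-norm $\rho_A$ and the geometric scale $\{|\det A|^i\}_{i \in \Z}$. Granted such a quasi-norm-invariant characterization, the conclusion follows at once from $\rho_A \sim \rho_B$. Concretely, the first step is to establish (or invoke from the body of the paper) a Peetre-type maximal function characterization
\[
\|f\|_{\TL(A)} \asymp \|f\|_{\PT(A)}
\]
for every sufficiently large Peetre parameter $\beta > \beta_0(\alpha, p, q)$, where the right-hand side is obtained by replacing each $|f \ast \varphi_i|$ in the definition of $\|\cdot\|_{\TL(A)}$ by the anisotropic Peetre maximal function
\[
\DoubleStar{i} f(x) = \sup_{y \in \R^d} \frac{|(f \ast \varphi_i)(x-y)|}{(1 + |\det A|^i \rho_A(y))^{\beta}},
\]
with the natural modifications for the cases $\TLi$ and $\TLii$. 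The crucial feature is that this norm depends on $A$ only through $|\det A|$ and $\rho_A$, up to the choice of the analyzing vector $\varphi$.

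The second step is to prove $\|\cdot\|_{\PT(A)} \asymp \|\cdot\|_{\PT(B)}$ under the hypothesis $\rho_A \sim \rho_B$. This rests on two facts. First, the Peetre characterization is independent of the specific choice of analyzing vector, so that any $\psi \in \SC(\R^d)$ analyzing both $A$ and $B$ simultaneously may be used on either side; such a $\psi$ can be constructed with $\widehat{\psi}$ supported in a compact annulus hit by both the $\{(A^*)^i\}$- and $\{(B^*)^j\}$-orbits, as a consequence of $\rho_A \sim \rho_B$. Second, the discrete scales $\{|\det A|^i\}_i$ and $\{|\det B|^j\}_j$ are interleaved up to bounded index shifts: this follows from the homogeneity identities $\rho_A(A^i x) = |\det A|^i \rho_A(x)$ (and its analogue for $B$) combined with $\rho_A \sim \rho_B$. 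Reindexing the sum over $i$ in $\|\cdot\|_{\PT(A)}$ as a sum over $j$ with a bounded index shift then yields the desired comparison, and applying the Peetre characterization from Step~1 to both sides produces $\TL(A) = \TL(B)$ with equivalent quasi-norms.

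The main obstacle will be the almost-orthogonality estimates underlying the independence of the analyzing vector in Step~2, together with the treatment of the $p = \infty$ cases. The norms $\|\cdot\|_{\TLi(A)}$ and $\|\cdot\|_{\TLii(A)}$ involve suprema over the anisotropic cubes $A^{\ell}([0,1]^d + k)$, whose geometry differs from that of the $B$-cubes; here $\rho_A \sim \rho_B$ must be leveraged to show that every $A$-cube is covered by a bounded number of $B$-cubes at a suitably shifted scale, and vice versa. Combined with the scale reindexing from Step~2, this yields the desired two-sided comparison in the $p = \infty$ regime. Uniform control of the multiplicative constants across all admissible $(\alpha, p, q)$, together with the delicate choice of Peetre exponent $\beta$ ensuring the characterization in Step~1, constitutes the main technical effort.
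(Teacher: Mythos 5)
Your high-level blueprint is the same as the paper's: use the Peetre-type maximal characterization of $\TL$ (\Cref{thm:maximal_characterizations}), which involves the matrix only through $\rho_A$, $|\det A|$, and the analyzing vector; then exploit $\rho_A \sim \rho_B$ to transfer between $A$- and $B$-scales, with extra care for the anisotropic cubes when $p = \infty$. However, there is a genuine gap in your Step~2 that is in fact the heart of the proof.

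The gap is the phrase ``reindexing the sum over $i$ \dots\ as a sum over $j$ with a bounded index shift.'' Even if you choose a single $\psi$ analyzing both $A$ and $B$, the terms in the two Peetre-type norms are \emph{different functions}: on the $A$-side you have $f \ast \psi_i^{(A)}$ with $\psi_i^{(A)} = |\det A|^i \psi(A^i \cdot)$ and the maximal function is damped by $(1 + \rho_A(A^i z))^{\beta}$, while on the $B$-side you have $f \ast \psi_j^{(B)}$ with $\psi_j^{(B)} = |\det B|^j \psi(B^j \cdot)$ and damping $(1 + \rho_B(B^j z))^{\beta}$. Knowing that the Fourier supports of $\psi_i^{(A)}$ and $\psi_j^{(B)}$ overlap only for $j$ in a bounded window around $ci$ (which is what $\rho_A \sim \rho_B$ gives via \Cref{lem:hom_covers} and \Cref{lem:IjJiInclusions}) does not by itself relate $|f \ast \psi_i^{(A)}|$ to $|f \ast \psi_j^{(B)}|$ pointwise; a relabeling of indices cannot compare two distinct convolutions. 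You acknowledge that ``almost-orthogonality estimates'' will be needed, but this is precisely the step that must actually be carried out, and it is more than a technical afterthought.

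The paper closes this gap by working with two analyzing vectors $\varphi$ (for $A$) and $\psi$ (for $B$) both satisfying the partition-of-unity condition \eqref{eq:analyzing3}, forming $\Psi_i := \sum_{j \in J_i} \psi_j$ (a finite sum because $\sup_i |J_i| < \infty$ by \Cref{lem:hom_covers}), and using that $\widehat{\Psi_i} \equiv 1$ on $\supp \widehat{\varphi_i}$ to obtain the reproducing identity $\varphi_i = \varphi_i \ast \Psi_i$. This turns $f \ast \varphi_i$ into $\sum_{j \in J_i} (f \ast \psi_j) \ast \varphi_i$, which can then be dominated by $\sum_{j \in J_i} \psi^{**}_{j,\beta} f$ after estimating the kernel integral using $\rho_A \sim \rho_B$ and \Cref{lem:detQuotient}; this yields the crucial pointwise transfer bound \eqref{eq:A-B-swap}. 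Everything downstream (Fubini-type interchange of $i$ and $j$ using $\sup_i |J_i| + \sup_j |I_j| < \infty$, the $p = \infty$ geometry relating $A^\ell \Omega_A$ to $B^{\ell_2}\Omega_B$) matches your outline, but without a convolution identity of this sort your ``reindexing'' step does not produce a proof.
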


A combination of \Cref{thm:intro} and \Cref{thm:intro2} provides a full characterization
of two expansive matrices inducing the same anisotropic Triebel-Lizorkin space.
This characterization extends the classification of anisotropic Hardy spaces \cite{bownik2003anisotropic}
to the full scale of Triebel-Lizorkin spaces, while complementing the classification
of anisotropic Besov spaces \cite{cheshmavar2020classification}
with a counterpart for Triebel-Lizorkin spaces.

In effect, the aforementioned classification theorems  translate the problem
of comparing function spaces into the comparison of homogeneous quasi-norms.
For this latter problem, explicit and verifiable criteria in terms of spectral properties
of the involved dilation matrices can be given, see, e.g.,
\cite[Section~10]{bownik2003anisotropic}, \cite[Section~7]{cheshmavar2020classification}
and \cite[Section~4]{bownik2020pde}.

As an illustration of \Cref{thm:intro}, we note that a matrix $B \in \mathrm{GL}(d, \mathbb{R})$
is equivalent to the scalar dilation $A = 2 \cdot I_d$ if and only if $B$ is diagonalizable
over $\mathbb{C}$ with all eigenvalues equal in absolute value, see, e.g.,
\cite[Example, p.7]{bownik2003anisotropic}.
Combined with \Cref{thm:intro}, this shows that for matrices $B$ that are not of this special form,
\[
  \TL(A) \neq \TL(B),
\]
unless $\alpha = 0$, $p \in (1,\infty)$ and $q = 2$.
In particular, the (homogeneous) Sobolev spaces $L^p_{\alpha}$ ($= \dot{\mathbf{F}}^{\alpha}_{p, 2} (A)$)
with $1 < p < \infty$ and $\alpha  \neq 0$ do \emph{not} coincide
with $\dot{\mathbf{F}}^{\alpha}_{p, 2} (B)$ for non-diagonalizible matrices $B$.

Lastly, let us mention an application of \Cref{thm:intro2}.
In \cite{koppensteiner2022anisotropic2, KvVV2021anisotropic},
we proved \emph{continuous} maximal characterizations of anisotropic Triebel-Lizorkin spaces $\TL(A)$
and obtained new results on their molecular decomposition.
These results were obtained under the additional assumption that the expansive matrix $A$ is exponential,
in the sense that $A = \exp(C)$ for some matrix $C \in \mathbb{R}^{d \times d}$.
\Cref{thm:intro2} implies that this additional assumption does not restrict
the scale of anisotropic Triebel-Lizorkin spaces.
Indeed, since there always exists an expansive \emph{and} exponential matrix $B$
that is equivalent to the given expansive matrix $A$ (cf.\ \mbox{\cite[Section~7]{cheshmavar2020classification}}),
it follows by \Cref{thm:intro2} that $\TL(A) = \TL(B)$ for all $\alpha \in \mathbb{R}$ and $p,q \in (0,\infty]$.

\subsection{Methods}

An essential ingredient in our proof of \Cref{thm:intro} and \Cref{thm:intro2}
is a simple characterization of the equivalence of two expansive matrices $A$ and $B$
in terms of properties of the associated covers $\bigl( (A^*)^i Q\bigr)_{i \in \mathbb{Z}}$
and $\bigl( (B^*)^j P\bigr)_{j \in \mathbb{Z}}$ of $ \mathbb{R}^d \setminus \{0\}$,
where $P, Q \subseteq \mathbb{R}^d \setminus \{0\}$ are suitable relatively compact sets;
see \cite[Lemma~6.2]{cheshmavar2020classification} and \Cref{sec:covers}.
Explicitly, this criterion asserts that two expansive matrices $A, B$ are equivalent
if and only if the associated homogeneous covers $\bigl( (A^*)^i Q\bigr)_{i \in \mathbb{Z}}$
and $\bigl( (B^*)^j P\bigr)_{j \in \mathbb{Z}}$ satisfy
\begin{align} \label{eq:equiv_matrix_intro}
 \sup_{i \in \mathbb{Z}}
   \big| \big\{ j \in \mathbb{Z} : (A^*)^i Q \cap (B^*)^j P \neq \emptyset \big\} \big|
 + \sup_{j \in \mathbb{Z}}
     \big| \big\{ i \in \mathbb{Z} : (A^*)^i Q \cap (B^*)^jP \neq \emptyset \big\} \big|
  < \infty.
\end{align}
The formulation \eqref{eq:equiv_matrix_intro} of the equivalence of matrices $A$ and $B$
is what is actually used in the proofs of our main results, as we expand upon next.

\subsubsection*{Necessary conditions.}

In the proof of \Cref{thm:intro}, we show the asserted equivalence of two matrices $A$ and $B$
by showing that the criterion \eqref{eq:equiv_matrix_intro} holds.
For this, we first carefully construct auxiliary functions in $\TL(A) = \TL(B)$ whose Fourier supports
are contained in finitely many of the sets $(A^*)^{i_k} Q$ and $(B^*)^{j_k} P$,
where $i_k, j_k \in \mathbb{Z}$, of appropriate homogeneous covers $\bigl( (A^*)^i Q\bigr)_{i \in \mathbb{Z}}$
and $\bigl( (B^*)^j P\bigr)_{j \in \mathbb{Z}}$.
Then, using adequate estimates of the norms of these auxiliary functions
(see \Cref{sec:norm_estimates}), it is shown directly that \eqref{eq:equiv_matrix_intro}
must hold for the case $\alpha \neq 0$, in which case $A$ and $B$ must be equivalent.
The proof strategy for  the case $\alpha = 0$ is similar,
but requires some additional arguments and tools.
For $p < \infty$, it is shown using the Khintchine inequality that necessarily $q = 2$
whenever $A$ and $B$ are not equivalent.
For $p = \infty$, we use dual norm characterizations of Triebel-Lizorkin norms
to conclude that $A$ and $B$ must be equivalent.

\subsubsection*{Sufficient conditions.}

In the proof of \Cref{thm:intro2}, the criterion \eqref{eq:equiv_matrix_intro} is used
to control the overlap of the Fourier supports of the $A$-dilates and $B$-dilates
of the analyzing vectors $\varphi$ and $\psi$, respectively,
that are used to define the spaces $\TL(A)$ and $\TL(B)$.
Combined with our maximal characterizations of Triebel-Lizorkin spaces
obtained in \cite{koppensteiner2022anisotropic2, KvVV2021anisotropic},
this allows to conclude that the analyzing vectors $\varphi$ and $\psi$ for $A$ respectively $B$
define the same space $\TL(A) = \TL(B)$.
\\~\\ \indent
As mentioned above, the used criterion \eqref{eq:equiv_matrix_intro} for equivalent matrices stems
from \cite{cheshmavar2020classification}, where it was used for the purpose
of classifying anisotropic Besov spaces.
For the actual comparison of function spaces, the approach of \cite{cheshmavar2020classification}
consists of showing that an anisotropic Besov space can be identified
with a (Besov-type) decomposition space \cite{VoigtlaenderEmbeddingsOfDecompositionSpaces},
which allows to apply the embedding theory \cite{VoigtlaenderEmbeddingsOfDecompositionSpaces}
developed by the third named author.
In contrast, the Triebel-Lizorkin spaces considered in this paper cannot be directly treated in the framework \cite{VoigtlaenderEmbeddingsOfDecompositionSpaces}; in particular, our main theorems cannot be easily deduced from \cite{VoigtlaenderEmbeddingsOfDecompositionSpaces}.
Some of our arguments for proving \Cref{thm:intro} are, however, inspired by ideas
used in \cite{VoigtlaenderEmbeddingsOfDecompositionSpaces},
most notably the use of the Khintchine inequality.
Nevertheless, all of our calculations and estimates differ non-trivially
from corresponding arguments in \cite{VoigtlaenderEmbeddingsOfDecompositionSpaces}
as the latter concerns Besov-type norms, which are technically easier to deal with
than the Triebel-Lizorkin norms considered in this paper.

\subsection{Organization}

The overall structure of this paper is as follows:
\Cref{sec:expansive0} collects various notions and results related to expansive matrices
and associated homogeneous covers.
The essential background on anisotropic Triebel-Lizorkin spaces is contained in \Cref{sec:TL0}.
\Cref{thm:intro} is proven in \Cref{sec:necessary}, whereas \Cref{sec:sufficient} provides the proof
of \Cref{thm:intro2}.
Lastly, some technical auxiliary results  are postponed to two appendices.

\subsection{Notation}

For a measurable set $\Omega \subseteq \mathbb{R}^d$,
we denote its Lebesgue measure by $\Measure(\Omega)$ and the indicator function of $\Omega$
by $\mathds{1}_{\Omega}$.
The notation $| \cdot | : \mathbb{R}^d \to [0, \infty)$ is used for the Euclidean norm.
The open Euclidean ball of radius $r > 0$ and center $x \in \mathbb{R}^d$ is denoted by $B_r (x)$.
The closure of a set $\Omega \subseteq \R^d$ will be denoted by $\overline{\Omega}$.

The Schwartz space on $\mathbb{R}^d$ is denoted by $\mathcal{S} (\mathbb{R}^d)$
and $\mathcal{S}' (\mathbb{R}^d)$ denotes its dual, the space of tempered distributions.
For $f \in \Schwartz'(\R^d)$ and $g \in \Schwartz(\R^d)$,
we define $\langle f, g \rangle := f(\overline{g})$,
so that the dual pairing $\langle \cdot, \cdot \rangle$ is sesquilinear,
in agreement with the inner product on $L^2(\R^d)$.
The subspace of $\mathcal{S}(\mathbb{R}^d)$ consisting of functions
with all moments vanishing (i.e., $\int x^\alpha \, f(x) \, dx = 0$ for all $\alpha \in \N_0^d$)
is denoted by $\mathcal{S}_0 (\mathbb{R}^d)$.
The dual space $\mathcal{S}'_0 (\mathbb{R}^d)$ is often identified
with the quotient $\SP$ of $\mathcal{S}'(\mathbb{R}^d)$ and the space of polynomials $\mathcal{P}(\mathbb{R}^d)$.
Lastly, the space of smooth compactly supported functions on an open set $U \subseteq \mathbb{R}^d$
is as usual denoted by $C^{\infty}_c (U)$.

For a function $f : \mathbb{R}^d \to \mathbb{C}$, its translation $T_y f$ and modulation $M_y f$
by $y \in \mathbb{R}^d$ are defined by $T_y f = f(\cdot - y)$ and $M_y f = e^{2\pi i y \cdot} f$,
respectively.
The Fourier transform of $f \in L^1 (\mathbb{R}^d)$ is normalized as
$\widehat{f} (\xi) = \int_{\mathbb{R}^d} f(x) e^{- 2 \pi i \xi \cdot x} \; dx$
for $\xi \in \mathbb{R}^d$, where $\xi \cdot x = \sum_{j=1}^d \xi_j x_j$.
The notation $\mathcal{F} f := \widehat{f}$ is also sometimes used.

For two functions $f, g : X \to [0,\infty)$ on a set $X$, we write $f \lesssim g$
whenever there exists $C > 0$ such that $f(x) \leq C g (x)$ for all $x \in X$.
We simply use the notation $f \asymp g$ whenever $f \lesssim g$ and $g \lesssim f$.
We also write $A \lesssim B$ for the inequality $A \leq C B$,
where $C > 0$ is constant independent of $A$ and $B$.
In case the implicit constant in $\lesssim$ depends on a quantity $\alpha$,
we also sometimes write $\lesssim_\alpha$.

\section{Expansive matrices and homogeneous covers}
\label{sec:expansive0}

This section collects background on expansive matrices and homogeneous quasi-norms.
A standard reference for most of the presented material is \cite{bownik2003anisotropic}.

\subsection{Expansive matrices}
\label{sec:expansive}

A matrix $A \in \mathrm{GL}(d, \R)$ is said to be \emph{expansive}
if $|\lambda| > 1$ for all $\lambda \in \sigma(A)$,
where $\sigma(A) \subseteq \CC$ denotes the spectrum of $A$.
Let $\lambda_-$ and $\lambda_+$ denote numbers
such that $1 < \lambda_- < \min_{\lambda \in \sigma(A)} |\lambda|$
and $\lambda_+ > \max_{\lambda \in \sigma(A)} |\lambda|$,
and define $\zeta_+ := \ln \lambda_+ / \ln |\det A|$ and $ \zeta_- := \ln \lambda_- / \ln |\det A|$.
Then there exists $C \geq 1$ such that, for all $x \in \mathbb{R}^d$,
\begin{equation}
  \begin{split}
    \frac{1}{C}  [\rho_A (x)]^{\zeta_-} &\leq | x | \leq C  [\rho_A(x)]^{\zeta_+},
    \quad \text{if }\rho_A(x) \geq 1, \\
    \frac{1}{C}  [\rho_A (x)]^{\zeta_+} &\leq | x | \leq C  [\rho_A(x)]^{\zeta_-},
    \quad \text{if }\rho_A(x) \leq 1,
  \end{split}
  \label{eq:ExpansiveConsequence}
\end{equation}
see, e.g., \cite[Lemma~3.2]{bownik2003anisotropic}.

A set $\Omega \subseteq \R^d$ is an \emph{ellipsoid} if $\Omega = \{ x \in \mathbb{R}^d : |P x| < 1 \}$
for some $P \in \mathrm{GL}(d, \mathbb{R})$.
Given any expansive matrix $A$, there exists an ellipsoid $\Omega_A$ and $r > 1$ such that
\begin{align} \label{eq:expansive_ellipsoid}
  \Omega_A \subseteq r \Omega_A \subseteq A \Omega_A,
\end{align}
and $\Lebesgue{\Omega_A} = 1$, see, e.g., \cite[Lemma 2.2]{bownik2003anisotropic}.
The choice of an ellipsoid satisfying \eqref{eq:expansive_ellipsoid} is not unique.
Throughout, given an expansive matrix $A$, we will fix one choice of ellipsoid $\Omega_A$ associated to $A$.

\subsection{Homogeneous quasi-norms}
\label{sec:QuasiNorms}

Let $A \in \mathrm{GL}(d, \mathbb{R})$ be an expansive matrix.
A \emph{homogeneous quasi-norm} associated with $A$ is a measurable function
$\rho : \mathbb{R}^d \to [0,\infty)$ satisfying the three properties:
\begin{enumerate}
  \item[(q1)] $\rho (x) = 0$ if and only if $x = 0$;
  \item[(q2)] $\rho(A x) = |\det A| \rho(x)$ for all $x \in \mathbb{R}^d$;
  \item[(q3)] there exists $C > 0$ such that $\rho(x+y) \leq C(\rho(x) + \rho(y))$
              for all $x, y \in \mathbb{R}^d$.
\end{enumerate}
By \cite[Lemma 2.4]{bownik2003anisotropic}, any two homogeneous quasi-norms $\rho_1, \rho_2$
associated to a fixed expansive matrix $A$ are equivalent,
in the sense that there exists $C > 0$ such that
\begin{align} \label{eq:equiv_homnorms}
  \frac{1}{C} \rho_1 (x) \leq \rho_2 (x) \leq C \rho_1 (x)
\end{align}
for all $x \in \mathbb{R}^d$.

In the sequel, we will primarily work with the so-called \emph{step homogeneous quasi-norm}
$\rho_A$ associated to $A$, defined as
\[
  \rho_A (x)
  = \begin{cases}
      |\det A|^i, & \text{if} \quad x \in A^{i+1} \Omega_A \setminus A^i \Omega_A, \\
      0,          & \text{if} \quad x = 0,
    \end{cases}
\]
where $\Omega_A$ is the fixed expansive ellipsoid \eqref{eq:expansive_ellipsoid};
see \cite[Definition~2.5]{bownik2003anisotropic}.

Two expansive matrices $A, B \in \mathrm{GL}(d, \mathbb{R})$ are called \emph{equivalent}
if the associated step homogeneous quasi-norms $\rho_{A}$ and $\rho_{B}$ are equivalent.
Note that, by \Cref{eq:equiv_homnorms}, two expansive matrices are equivalent if and only if
all of their associated quasi-norms are equivalent.

The following characterization is \cite[Lemma 10.2]{bownik2003anisotropic}.

\begin{lemma}[\cite{bownik2003anisotropic}]
  Let $A,B \in \GL(d, \R)$ be expansive.
  Then $A$ and $B$ are equivalent if and only if
  \[
    \sup_{k \in \mathbb{Z}}
      \big\| A^{-k} B^{\lfloor c k \rfloor} \big\|
    < \infty,
  \]
  where $c = c(A, B) := \ln |\det A| / \ln |\det B|$.
\end{lemma}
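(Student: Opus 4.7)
The plan is to translate the operator-norm condition $\sup_k \|A^{-k} B^{\lfloor ck\rfloor}\| < \infty$ into the equivalence $\rho_A \asymp \rho_B$, using \eqref{eq:ExpansiveConsequence} to pass between Euclidean and step-quasi-norm estimates. The algebraic input is the determinant identity
\[
  |\det (A^{-k} B^{\lfloor ck\rfloor})|
  = |\det A|^{-k} \, |\det B|^{\lfloor ck \rfloor}
  = |\det B|^{\lfloor ck\rfloor - ck}
  \in \bigl( |\det B|^{-1}, 1 \bigr],
\]
which follows from $c = \ln|\det A|/\ln|\det B|$ and $\lfloor ck\rfloor - ck \in (-1,0]$.

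For the forward direction, assume $\rho_A \asymp \rho_B$. Then property (q2), together with $\rho_A \asymp \rho_B$ applied twice, gives for every $x \in \R^d$ and $k \in \Z$,
\[
  \rho_A \bigl(A^{-k} B^{\lfloor ck\rfloor} x\bigr)
  = |\det A|^{-k} \, \rho_A \bigl(B^{\lfloor ck\rfloor} x\bigr)
  \asymp |\det A|^{-k} \, |\det B|^{\lfloor ck\rfloor} \, \rho_B(x)
  \asymp \rho_A(x),
\]
uniformly in $k$, the last step invoking the determinant identity. Restricting to $x$ in the closed Euclidean unit ball, $\rho_A(x)$ is bounded, hence so is $\rho_A(A^{-k}B^{\lfloor ck\rfloor} x)$ uniformly in $k$ and $x$, and \eqref{eq:ExpansiveConsequence} converts this quasi-norm bound into a uniform Euclidean bound on $|A^{-k} B^{\lfloor ck\rfloor} x|$, proving $\sup_k \|A^{-k} B^{\lfloor ck\rfloor}\| < \infty$.

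For the converse, assume $M := \sup_k \|A^{-k} B^{\lfloor ck\rfloor}\| < \infty$. The elementary estimate $\|T^{-1}\| \leq \|T\|^{d-1}/|\det T|$ for $T \in \GL(d,\R)$, combined with the determinant identity, shows that also $\sup_k \|B^{-\lfloor ck\rfloor} A^k\| < \infty$. Fix $x \neq 0$, let $k \in \Z$ be the unique integer with $x \in A^{k+1}\Omega_A \setminus A^k \Omega_A$, and set $w := A^{-k} x$, so that $\rho_A(x) = |\det A|^k$ and $w$ lies in the fixed relatively compact set $A\Omega_A \setminus \Omega_A$, which is bounded away from the origin. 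The two-sided operator bounds then yield
\[
  |B^{-\lfloor ck\rfloor} x|
  = \bigl| (B^{-\lfloor ck\rfloor} A^k)\, w \bigr|
  \asymp |w|
  \asymp 1,
\]
uniformly in $x$ and $k$. Applying \eqref{eq:ExpansiveConsequence} to $B$ gives $\rho_B(B^{-\lfloor ck\rfloor} x) \asymp 1$, and homogeneity of $\rho_B$ yields
\[
  \rho_B(x)
  \asymp |\det B|^{\lfloor ck\rfloor}
  \asymp |\det B|^{ck}
  = |\det A|^k
  = \rho_A(x),
\]
so $A$ and $B$ are equivalent.

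The main obstacle I anticipate is the careful bookkeeping around the two regimes $\rho_\bullet \geq 1$ and $\rho_\bullet \leq 1$ in \eqref{eq:ExpansiveConsequence}; however, since in both directions the relevant quantities $\rho_A(\cdot)$ or $|\cdot|$ are shown to be confined to a single compact sub-interval of $(0,\infty)$ uniformly in $k$, only one pair of constants is needed per regime and the case split is routine.
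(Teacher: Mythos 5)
The paper does not prove this lemma; it is stated as an imported result, citing \cite[Lemma~10.2]{bownik2003anisotropic}, so there is no in-paper argument to compare against. Judged on its own, your proof is correct and self-contained. The forward direction correctly combines the quasi-norm homogeneity (q2), the equivalence $\rho_A \asymp \rho_B$, the determinant identity $|\det A|^{-k}|\det B|^{\lfloor ck\rfloor} = |\det B|^{\lfloor ck\rfloor - ck} \in (|\det B|^{-1},1]$, and the two-sided comparison \eqref{eq:ExpansiveConsequence} between $\rho_A$ and $|\cdot|$ to turn a uniform quasi-norm bound on the image of the unit ball into a uniform operator-norm bound. The converse correctly observes that the singular-value (or adjugate) inequality $\|T^{-1}\| \leq \|T\|^{d-1}/|\det T|$ upgrades a one-sided bound on $\|A^{-k}B^{\lfloor ck\rfloor}\|$ to a two-sided one, which, applied to $w = A^{-k}x$ lying in the compact annulus $\overline{A\Omega_A \setminus \Omega_A}$ bounded away from the origin, yields $|B^{-\lfloor ck\rfloor}x| \asymp 1$, hence $\rho_B(B^{-\lfloor ck\rfloor}x) \asymp 1$ by \eqref{eq:ExpansiveConsequence} applied to $B$, and then $\rho_B(x) \asymp |\det B|^{\lfloor ck\rfloor} \asymp |\det B|^{ck} = |\det A|^k = \rho_A(x)$. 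The two regimes in \eqref{eq:ExpansiveConsequence} are indeed handled uniformly because the relevant quantity is confined to a fixed compact interval in $(0,\infty)$; your closing remark about this is accurate. This is essentially the standard argument from the original source.
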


As a corollary of the previous lemma (see also \cite[Remark~4.9]{cheshmavar2020classification}),
we see that equivalence of expansive matrices is preserved under taking transposes.

\begin{corollary}\label{cor:adjoint_equivalent}
  Two expansive matrices $A$ and $B$ are equivalent
  if and only if $A^*$ and $B^*$ are equivalent.
\end{corollary}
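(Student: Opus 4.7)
The plan is to derive both asserted equivalences---of $A, B$ and of $A^*, B^*$---from matrix-product criteria supplied by the preceding lemma. Since transposition preserves determinants, $c(A^*, B^*) = c(A, B) =: c$, so the lemma yields
\[
  A \text{ and } B \text{ equivalent}
  \iff \sup_{k \in \Z} \bigl\| A^{-k} B^{\lfloor ck \rfloor} \bigr\| < \infty,
\]
\[
  A^* \text{ and } B^* \text{ equivalent}
  \iff \sup_{k \in \Z} \bigl\| (A^*)^{-k} (B^*)^{\lfloor ck \rfloor} \bigr\| < \infty,
\]
and my goal is to connect these two boundedness conditions by chaining three elementary operations: matrix transposition, matrix inversion, and the reindexing $k \mapsto -k$.

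The only subtle step is inversion, which I expect to be the main obstacle: for arbitrary matrix sequences, $\sup_k \|M_k\| < \infty$ and $\sup_k \|M_k^{-1}\| < \infty$ are independent conditions, and here it is precisely the determinantal balance built into the choice of $c$ that forces them to coincide. Concretely, for $M_k := A^{-k} B^{\lfloor ck \rfloor}$, the identity $|\det A| = |\det B|^c$ gives $|\det M_k| = |\det B|^{\lfloor ck \rfloor - ck} \in [\,|\det B|^{-1},\, 1\,]$, so $|\det M_k|$ is uniformly bounded both above and away from zero. Combined with the cofactor bound $\|M^{-1}\| \leq C_d \|M\|^{d-1} / |\det M|$ (applied to both $M = M_k$ and $M = M_k^{-1}$), this yields $\sup_k \|M_k\| < \infty \iff \sup_k \|M_k^{-1}\| < \infty$. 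Hence $A$ and $B$ are equivalent if and only if $\sup_k \|B^{-\lfloor ck \rfloor} A^k\| < \infty$.

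To finish, I apply $\|M\| = \|M^*\|$ to rewrite this last condition as $\sup_k \|(A^*)^{k} (B^*)^{-\lfloor ck \rfloor}\| < \infty$. Substituting $k \mapsto -k$ turns it into $\sup_k \|(A^*)^{-k} (B^*)^{\lceil ck \rceil}\| < \infty$; since $\lceil ck \rceil - \lfloor ck \rfloor \in \{0, 1\}$, the resulting factor of $(B^*)^{\pm 1}$ has bounded operator norm and is absorbed, leaving exactly the criterion characterizing the equivalence of $A^*$ and $B^*$. Chaining these equivalences proves the claim.
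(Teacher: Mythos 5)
Your proof is correct and follows the route the paper intends: the paper itself gives no details beyond ``As a corollary of the previous lemma,'' and your argument fills in precisely what that corollary-from-lemma derivation requires. The key observation you supply -- that the determinantal balance $|\det(A^{-k}B^{\lfloor ck\rfloor})| = |\det B|^{\lfloor ck\rfloor - ck}$ stays in a compact subinterval of $(0,\infty)$, so boundedness of $\|M_k\|$ and $\|M_k^{-1}\|$ are equivalent via the adjugate bound -- together with $\|M\| = \|M^*\|$ and the harmless $\lceil ck\rceil$ versus $\lfloor ck\rfloor$ adjustment, is exactly the content that makes the corollary a genuine corollary; nicely done.
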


\subsection{Homogeneous covers}
\label{sec:covers}

Let $A \in \mathrm{GL}(d, \mathbb{R})$ be expansive and let
$Q \subseteq \mathbb{R}^d$ be open such that $\overline{Q}$ is compact
in $\mathbb{R}^d \setminus \{0\}$.  A cover
$(A^i Q)_{i \in \mathbb{Z}}$ of $\mathbb{R}^d \setminus \{0\}$ is
called a \emph{homogeneous cover induced by $A$}.  Given two
homogeneous covers $(A^i Q)_{i \in \mathbb{Z}}$ and
$(B^j P)_{j \in \mathbb{Z}}$ induced by
$A, B \in \mathrm{GL}(d, \mathbb{R})$, we define
\begin{align} \label{eq:Ji-Ij}
    J_i
    := \big\{ k \in \mathbb{Z} :  A^i Q \cap B^k P \neq \emptyset \big\}
    \quad \text{and} \quad
    I_j := \big\{ k \in \mathbb{Z} : A^k Q \cap B^j P \neq \emptyset \big\}
  \end{align}
for fixed $i, j \in \mathbb{Z}$.

The index sets defined in \Cref{eq:Ji-Ij} can be used for
characterizing the equivalence of two expansive matrices as the
following lemma shows. See \cite[Lemma
6.2]{cheshmavar2020classification} for a proof.

\begin{lemma}[\cite{cheshmavar2020classification}] \label{lem:hom_covers}
  Let $A, B \in \mathrm{GL}(d, \mathbb{R})$ be expansive
  and let $(A^i Q)_{i \in \mathbb{Z}}$ and $(B^jP)_{j \in \mathbb{Z}}$
  be associated induced covers of $\mathbb{R}^d \setminus \{0\}$.
  Then the step homogeneous quasi-norms $\rho_A$ and $\rho_B$ are equivalent if and only if
  \[
    \sup_{i \in \mathbb{Z}} |J_i | + \sup_{j \in \mathbb{Z}} |I_j| < \infty.
  \]
\end{lemma}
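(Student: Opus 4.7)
Because $\overline{Q}$ is compact in $\R^d \setminus \{0\}$, the step quasi-norm $\rho_A$---whose positive values $\{|\det A|^k : k \in \Z\}$ accumulate only at $0$ and $\infty$---takes only finitely many values on $\overline{Q}$. Hence there exist $0 < c_Q \leq C_Q$ with $c_Q \leq \rho_A(y) \leq C_Q$ for all $y \in \overline{Q}$, and the homogeneity $\rho_A(A^i y) = |\det A|^i \rho_A(y)$ upgrades this to
\[
c_Q |\det A|^i \leq \rho_A(x) \leq C_Q |\det A|^i
\quad \text{for all } x \in A^i Q,
\]
and an analogous estimate holds for $\rho_B$ on each $B^j P$. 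This is the bridge between the quasi-norm picture and the cover picture.

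\textbf{Necessity.} Assuming $\rho_A \asymp \rho_B$, for any pair $(i, j)$ with $A^i Q \cap B^j P \neq \emptyset$ I would pick $x$ in the intersection and combine the two preparatory comparisons with $\rho_A(x) \asymp \rho_B(x)$ to obtain $|\det A|^i \asymp |\det B|^j$ with constants independent of $i$ and $j$. Setting $c := \ln|\det A|/\ln|\det B|$, this forces $j$ into a uniformly bounded interval around $c i$, hence $\sup_i |J_i| < \infty$; the bound on $\sup_j |I_j|$ follows by swapping the roles of $A$ and $B$.

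\textbf{Sufficiency.} Write $M := \sup_i |J_i| + \sup_j |I_j|$. A Lebesgue volume count based on $A^i Q = \bigcup_{j \in J_i}(A^i Q \cap B^j P)$ gives
\[
|\det A|^i \, \Measure(Q) = \Measure(A^i Q) \leq \sum_{j \in J_i} \Measure(B^j P) \leq M \, |\det B|^{\max J_i} \, \Measure(P),
\]
so $\max J_i \geq c i + C_0$, and by the symmetric argument $\max I_j \geq c^{-1} j + C_0'$. The critical step is to upgrade these one-sided bounds on the maxima to a two-sided control $|j - c i| \leq C$ on every edge $(i, j)$ of the overlap graph. For the annular covers $E_k^A := A^{k+1} \Omega_A \setminus A^k \Omega_A$ (and their $B$-analogues), the corresponding index set $\{l : E_k^A \cap E_l^B \neq \emptyset\}$ is in fact an interval of consecutive integers in dimension $d \geq 2$, since the annular regions are connected and $\rho_B$ can change value only by $\pm 1$ across the boundary of some $B^{l+1}\Omega_B$. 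Consecutiveness together with the degree bound then forces the extreme values of this index set to differ by at most $M - 1$, and symmetrically on the dual side, which closes the loop. A general $(Q,P)$-cover is reduced to this annular case via the elementary observation that $Q$ and $P$ each meet only finitely many annuli, so the degree bound transfers between the two graphs. The resulting two-sided estimate $|\det A|^i \asymp |\det B|^j$ on edges, combined with the preparatory comparison, delivers $\rho_A(x) \asymp \rho_B(x)$ on every piece $A^i Q \cap B^j P$, and hence on all of $\R^d \setminus \{0\}$.

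\textbf{Main obstacle.} The necessity direction is essentially a one-liner. The real work lies in sufficiency: the volume count only produces a one-sided lower bound on $\max J_i$, and converting this into two-sided affine control on every element of $J_i$ requires the index sets to be ``contiguous''. The cleanest way to guarantee this is to pass to the annular covers and exploit connectedness in dimension $d \geq 2$; dimension $d = 1$ requires the additional observation that the two connected components of each annular shell are related by $x \mapsto -x$, under which $\rho_B$ is invariant.
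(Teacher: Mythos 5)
The paper does not prove this lemma itself: it is quoted from and cited to \cite[Lemma~6.2]{cheshmavar2020classification} (``See \dots for a proof''), and the neighbouring Lemmas~\ref{lem:detQuotient}--\ref{lem:NeighborSetControl} only redevelop the easy necessity direction. Your argument is therefore an independent reconstruction rather than something to match against the text, and as far as I can tell it is correct and complete. The preliminary comparison $\rho_A\asymp|\det A|^i$ on $A^iQ$ is the right bridge, and necessity follows as you say. For sufficiency you correctly isolate the two real points: (i) the overlap bound for the given covers transfers to the annular covers $E_k^A:=A^{k+1}\Omega_A\setminus A^k\Omega_A$, $E_l^B:=B^{l+1}\Omega_B\setminus B^l\Omega_B$ (with a possibly different constant, say $M'$, rather than the literal $M$), because $\overline{Q},\overline{P}$ meet only finitely many annuli and, conversely, any point of $E_k^A\cap E_l^B$ lies in some $A^iQ\cap B^jP$ with $|i-k|,|j-l|$ uniformly bounded by the preliminary comparison; (ii) the annular index set $J_k^{\mathrm{ann}}:=\{l:E_k^A\cap E_l^B\neq\emptyset\}$ is an interval. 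For (ii), a clean version of your connectivity argument: for $d\ge 2$ the set $E_k^A$ (the region between two nested, origin-centred open ellipsoids) is connected; if $l<l'<l''$ with $l,l''\in J_k^{\mathrm{ann}}$ but $l'\notin J_k^{\mathrm{ann}}$, then $E_k^A$ avoids $\partial(B^{l'}\Omega_B)\subseteq E_{l'}^B$, so it is separated by the disjoint open sets $B^{l'}\Omega_B$ and $(\overline{B^{l'}\Omega_B})^{c}$, both of which it meets --- a contradiction. Your $d=1$ fix via the symmetry $x\mapsto-x$, under which both step quasi-norms are invariant, is also right. The only place you are terse is ``closes the loop''; spelled out, for any annular edge $(k,l)$ one has $l\ge\min J_k^{\mathrm{ann}}\ge\max J_k^{\mathrm{ann}}-(M'-1)\ge ck+C_0-(M'-1)$, and symmetrically $k\ge l/c+C_0'-(M'-1)$, i.e.\ $l\le ck+c(M'-1-C_0')$; the two bounds give $|\det A|^k\asymp|\det B|^l$ on every annular edge, which via the preliminary comparison yields $\rho_A\asymp\rho_B$ on $\R^d\setminus\{0\}$.
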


In addition to \Cref{lem:hom_covers}, we will also make use of more
refined estimates on the cardinalities of the index sets defined in
\Cref{eq:Ji-Ij}. We provide the required estimates in the following two lemmata.
The provided proofs follow arguments in the proof of Lemma~\ref{lem:hom_covers}
(cf.\ \cite[Lemma~6.2]{cheshmavar2020classification}) closely, but are included
here for completeness.

\begin{lemma}\label{lem:detQuotient}
  Let $A, B \in \mathrm{GL}(d, \mathbb{R})$ be two equivalent expansive matrices and
  $Q, P \subseteq \R^d$ open such that $\overline{Q}, \overline{P}$ are
  compact in $\mathbb{R}^d \setminus \{0\}$.
  Then there exists $C > 0$ such that
  \begin{equation}
    \label{eq:detQuotient}
    \frac{1}{C} |\det B|^j
    \leq |\det A|^i
    \leq C |\det B|^j
  \end{equation}
  whenever $i,j \in \Z$ are such that $ A^i Q \cap B^j P \neq \emptyset$.
\end{lemma}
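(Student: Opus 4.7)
The plan is to reduce the estimate to a comparison of the step quasi-norms $\rho_A$ and $\rho_B$ at any point $x$ lying in the intersection $A^i Q \cap B^j P$. The key inputs are the homogeneity property (q2) of the step quasi-norms, the fact that a compact subset of $\R^d \setminus \{0\}$ is bounded and bounded away from the origin in the Euclidean norm, and the equivalence $\rho_A \asymp \rho_B$ which holds because $A$ and $B$ are equivalent (by the definition given in \Cref{sec:QuasiNorms}).

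First, I would establish the following auxiliary claim: there exist constants $c_1, c_2 > 0$ (depending only on $A$ and $Q$) such that $c_1 \leq \rho_A(y) \leq c_2$ for every $y \in \overline{Q}$. Since $\overline{Q}$ is compact in $\R^d \setminus \{0\}$, there exist $0 < r \leq R < \infty$ such that $r \leq |y| \leq R$ for all $y \in \overline{Q}$. Combining these bounds with \eqref{eq:ExpansiveConsequence} (applied with the two cases $\rho_A(y) \geq 1$ and $\rho_A(y) \leq 1$) gives uniform upper and lower positive bounds on $\rho_A(y)$. The same reasoning yields analogous constants $c_1', c_2' > 0$ with $c_1' \leq \rho_B(y) \leq c_2'$ for all $y \in \overline{P}$.

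Next, I would apply the homogeneity property (q2). If $x \in A^i Q$, then $A^{-i} x \in Q$, so
\[
  \rho_A(x) = |\det A|^i \, \rho_A(A^{-i} x) \in \bigl[ c_1 |\det A|^i , \, c_2 |\det A|^i \bigr].
\]
Similarly, if $x \in B^j P$, then $\rho_B(x) \in [c_1' |\det B|^j, c_2' |\det B|^j]$.

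Finally, suppose $A^i Q \cap B^j P \neq \emptyset$ and pick any $x$ in the intersection. Since $A$ and $B$ are equivalent expansive matrices, there exists $K \geq 1$, independent of $i$ and $j$, such that $K^{-1} \rho_A(x) \leq \rho_B(x) \leq K \rho_A(x)$. Chaining the four inequalities for $\rho_A(x)$ and $\rho_B(x)$ then yields $|\det A|^i \asymp |\det B|^j$ with a constant $C$ depending only on $A, B, Q, P$, which is the claim \eqref{eq:detQuotient}. No single step is subtle; the only mild obstacle is the first, since $\rho_A$ is not continuous and one must instead appeal to \eqref{eq:ExpansiveConsequence} to obtain uniform bounds on $\overline{Q}$.
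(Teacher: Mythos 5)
Your proposal is correct and takes essentially the same approach as the paper: pick a point in $A^i Q \cap B^j P$, combine homogeneity of the step quasi-norms with the equivalence $\rho_A \asymp \rho_B$, and bound $\rho_A$, $\rho_B$ uniformly on the relatively compact sets $\overline{Q}$, $\overline{P}$ via \eqref{eq:ExpansiveConsequence}. The only cosmetic difference is that you state the uniform two-sided bounds on $\overline{Q}$ and $\overline{P}$ up front rather than inline, but the substance is identical.
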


\begin{proof}
  If $A^i Q \cap B^j P \neq \emptyset$, then there exists
  $x_0 \in Q \cap A^{-i}B^j P$.  Hence, by homogeneity of
  $\rho_A, \rho_B$ and the assumption of their equivalence, it follows that
  \begin{equation*}
    |\det B|^j  \rho_B(B^{-j} A^i x_0) = \rho_B(A^i x_0) \geq \frac{1}{C}  \rho_A(A^i x_0)
    = \frac{|\det A|^i}{C}  \rho_A(x_0).
  \end{equation*}
  Since $B^{-j} A^i x_0 \in P$, this yields
  \begin{equation*}
    |\det A|^i
    \leq C
    \frac{\max_{x \in \overline{P}}\{\rho_B(x)\}}{\min_{x \in \overline{Q}}\{\rho_A(x)\}} \, |\det B|^j,
  \end{equation*}
  where
  $\max_{x \in \overline{P}}\{\rho_B(x)\} / \min_{x \in
    \overline{Q}}\{\rho_A(x)\}$ is finite by
  \Cref{eq:ExpansiveConsequence} as
  $\overline{Q}, \overline{P}$ are compact in
  $\mathbb{R}^d \setminus \{0\}$.
  The left inequality of
  \eqref{eq:detQuotient} follows analogously by using that
  \begin{equation*}
    |\det B|^j  \rho_B(B^{-j} A^i x_0)  \leq C  \rho_A(A^i x_0)
    = C  |\det A|^i \rho_A(x_0),
  \end{equation*}
  which completes the proof.
\end{proof}

We also need the following estimates involving parameters $\alpha, \beta \in \mathbb{R}$.

\begin{lemma}
   \label{lem:IjJiInclusions-with-alpha-beta}
  Let $A, B \in \mathrm{GL}(d, \mathbb{R})$ be expansive, let $\alpha, \beta \in \R \setminus \{0\}$,
  and let $Q, P \subseteq \R^d$ be open such that $\overline{Q}, \overline{P}$ are
  compact in $\mathbb{R}^d \setminus \{0\}$.
  If there exists $C > 0$ such that
  \begin{equation}
    \label{eq:WeightEquivAssumption}
    \frac{1}{C} |\det B|^{\beta j}
    \leq |\det A|^{\alpha i}
    \leq C |\det B|^{\beta j} \qquad \text{whenever } A^i Q \cap B^j P \neq \emptyset,
  \end{equation}
  then there exists $N \in \N$ such that, for all $i, j \in \Z$,
   \begin{equation*}
      J_i \subseteq \Big\{ j \in \Z \colon \Big|j - \Big\lfloor
      \frac{\alpha}{\beta} \, c \, i \Big\rfloor\Big| \leq N \Big\}
      \qquad \text{and} \qquad
      I_j \subseteq \Big\{ i \in \Z \colon \Big|i - \Big\lfloor
      \frac{\beta}{\alpha} \, \frac{1}{c} \, j \Big\rfloor\Big| \leq N \Big\},
  \end{equation*}
  where  $c = c(A, B) := \ln |\det A| / \ln |\det B|$.
\end{lemma}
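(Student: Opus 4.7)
The plan is to reduce the multiplicative assumption \eqref{eq:WeightEquivAssumption} to a linear inequality between $i$ and $j$ by taking logarithms, and then to recognize that the resulting slope is precisely $\frac{\alpha}{\beta} c$ (respectively $\frac{\beta}{\alpha} \cdot \frac{1}{c}$), by the very definition $c = \ln|\det A|/\ln|\det B|$. Once this linear control is in hand, a floor function picks up at most an additional error of $1$, yielding the claimed bounds on $J_i$ and $I_j$.

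More concretely, I would argue as follows. Suppose $i, j \in \Z$ satisfy $A^i Q \cap B^j P \neq \emptyset$. Applying $\log$ to \eqref{eq:WeightEquivAssumption} gives
\[
  \bigl| \alpha i \ln|\det A| - \beta j \ln|\det B| \bigr|
  \leq \ln C.
\]
Since $A$ and $B$ are expansive, $\ln|\det A|, \ln|\det B| > 0$, so we may factor out $\beta \ln|\det B|$ from the left-hand side. Using that $\ln|\det A| = c \ln|\det B|$, this yields
\[
  \Bigl| j - \frac{\alpha}{\beta}\, c\, i \Bigr|
  \leq \frac{\ln C}{|\beta| \ln|\det B|}.
\]
The right-hand side is a constant independent of $i$ and $j$, so choosing $N \in \N$ with $N \geq 1 + \frac{\ln C}{|\beta|\ln|\det B|}$ and using $\bigl|\lfloor x \rfloor - x\bigr| \leq 1$, I obtain $\bigl| j - \lfloor \frac{\alpha}{\beta} c\, i\rfloor \bigr| \leq N$, which is exactly the first inclusion in the statement.

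The inclusion for $I_j$ is entirely symmetric: dividing the logarithmic inequality instead by $\alpha \ln|\det A|$ and observing that $\ln|\det B|/\ln|\det A| = 1/c$ gives $\bigl| i - \frac{\beta}{\alpha}\cdot\frac{1}{c}\cdot j \bigr| \leq \frac{\ln C}{|\alpha|\ln|\det A|}$, and enlarging $N$ if necessary finishes the proof. I do not expect any real obstacle here; the lemma is purely a quantitative, coordinate-level rephrasing of the bi-sided estimate \eqref{eq:WeightEquivAssumption}, and the only care needed is to keep track of the sign of $\beta$ (respectively $\alpha$) by working with absolute values after factoring.
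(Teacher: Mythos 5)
Your proof is correct and follows essentially the same route as the paper: take logarithms of \eqref{eq:WeightEquivAssumption}, divide by $\beta\ln|\det B|$ (resp.\ $\alpha\ln|\det A|$), recognize the slope as $\frac{\alpha}{\beta}c$ (resp.\ $\frac{\beta}{\alpha}\frac{1}{c}$), and absorb the floor function into the constant by adding $1$.
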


\begin{proof}
  Taking the logarithm of \Cref{eq:WeightEquivAssumption} yields
  \begin{equation*}
    \beta j \, \ln (|\det B|) - \ln(C)
    \leq \alpha i  \, \ln(|\det A|)
    \leq \beta j \, \ln(|\det B|) + \ln(C),
  \end{equation*}
  and thus $\big| \alpha i \ln (|\det A|) - \beta j \ln(|\det B|) \big| \leq \ln (C)$.
  This easily implies that
  \begin{equation*}
    \Big|
      i - j \frac{\beta}{\alpha} \frac{\ln(|\det B|)}{\ln(|\det A|)}
    \Big|
    \leq \frac{\ln(C)}{|\alpha| \, \ln (|\det A|)}
    .
  \end{equation*}
  Setting
  $ N_1 := \big\lceil \frac{\ln(C)}{|\alpha| \, \ln(|\det A|)}
  \big\rceil + 1 $, it follows that
  \begin{equation*}
    I_j
    \subseteq \Big\{ i \in \Z \colon \Big|i - \Big\lfloor \frac{\beta}{\alpha} \, \frac{1}{c} \, j
    \Big\rfloor\Big|  \leq N_1 \Big\} .
  \end{equation*}
  The desired inclusion for $J_i$ is obtained analogously with
  $ N_2 := \big\lceil \frac{\ln(C)}{|\beta| \, \ln(|\det B|)}
  \big\rceil + 1 $, which completes the proof by setting $N:= \max \{N_1, N_2\}$.
\end{proof}

\begin{corollary}
  \label{lem:IjJiInclusions}
  Let $A, B \in \mathrm{GL}(d, \mathbb{R})$ be equivalent
  expansive matrices and $Q, P \subseteq \R^d$ open such that
  $\overline{Q}, \overline{P}$ are compact in
  $\mathbb{R}^d \setminus \{0\}$.  Then
  there exists $N \in \N$ such that, for all $i, j \in \Z$,
   \begin{equation*}
     J_i \subseteq \{ j \in \Z \colon |j - \lfloor
     c \, i \rfloor| \leq N \}
     \qquad \text{and} \qquad
     I_j \subseteq \{ i \in \Z \colon |i - \lfloor
     j/c \rfloor| \leq N \},
   \end{equation*}
  where  $c = c(A, B) := \ln |\det A| / \ln |\det B|$.
\end{corollary}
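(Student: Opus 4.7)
The plan is to derive this corollary as a direct specialization of \Cref{lem:IjJiInclusions-with-alpha-beta} to the case $\alpha = \beta = 1$. Under that substitution the announced inclusions take the form
\[
  J_i \subseteq \{ j \in \Z \colon |j - \lfloor c i \rfloor| \leq N \}
  \quad \text{and} \quad
  I_j \subseteq \{ i \in \Z \colon |i - \lfloor j/c \rfloor| \leq N \},
\]
which matches the statement exactly, so the only real work is to verify the hypothesis \eqref{eq:WeightEquivAssumption} for $\alpha = \beta = 1$.

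First I would observe that since $A$ and $B$ are assumed to be equivalent, \Cref{lem:detQuotient} applies directly: it yields a constant $C > 0$ such that whenever $A^i Q \cap B^j P \neq \emptyset$ one has
\[
  \frac{1}{C} |\det B|^j \leq |\det A|^i \leq C |\det B|^j.
\]
This is precisely \eqref{eq:WeightEquivAssumption} with $\alpha = \beta = 1$. I would then invoke \Cref{lem:IjJiInclusions-with-alpha-beta} with this choice of exponents to obtain the corresponding $N \in \N$ and the claimed inclusions for $J_i$ and $I_j$.

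There is essentially no obstacle, since both the needed det-ratio estimate and the geometric conclusion are already packaged in the two preceding lemmas; the corollary is only the statement that equivalence of $A$ and $B$ is strong enough to feed \Cref{lem:IjJiInclusions-with-alpha-beta}. The only point worth mentioning is that the case $\alpha = \beta = 1 \neq 0$ is admissible in that lemma, and that $\alpha/\beta = \beta/\alpha = 1$ makes the shift indices collapse to $\lfloor c i \rfloor$ and $\lfloor j/c \rfloor$, which is exactly what we want.
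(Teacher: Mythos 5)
Your proposal matches the paper's proof exactly: the paper also derives the corollary by first invoking \Cref{lem:detQuotient} to verify hypothesis \eqref{eq:WeightEquivAssumption} with $\alpha = \beta = 1$, and then applying \Cref{lem:IjJiInclusions-with-alpha-beta}. Both the chain of lemmas and the specialization are identical.
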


\begin{proof}
  This follows from Lemma~\ref{lem:detQuotient} and
  Lemma~\ref{lem:IjJiInclusions-with-alpha-beta} with
  $\alpha = \beta = 1$.
\end{proof}

Lastly, for a single homogeneous cover $(A^i Q)_{i \in \mathbb{Z}}$, we also define the index set
\[
  N_i (A)
  := \big\{
       k \in \Z
       \colon
       A^i Q \cap A^k Q \neq \emptyset
     \big\}
  .
\]
Note that $N_i (A)$ coincides with the index sets in \eqref{eq:Ji-Ij}
for the choice $A=B$ and $Q = P$.
Therefore, the following is a direct consequence of \Cref{lem:IjJiInclusions}.

\begin{corollary}\label{lem:NeighborSetControl}
  Let $A \in \mathrm{GL}(d, \mathbb{R})$ be expansive and $Q \subseteq \R^d$ open such that
  $\overline{Q}$ is compact in $\mathbb{R}^d \setminus \{0\}$.
  Then there exists  $N \in \N$ such that, for all $i \in \Z$,
  \begin{equation*}
    N_i (A)
    \subseteq \{ j \in \Z \colon |j - i| \leq N \}.
  \end{equation*}
\end{corollary}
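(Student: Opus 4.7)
The plan is essentially to invoke Corollary~\ref{lem:IjJiInclusions} in the degenerate case $B = A$ and $P = Q$, so there is almost no real work to do. First, I would verify that the hypotheses of Corollary~\ref{lem:IjJiInclusions} are satisfied by this choice: the matrix $A$ is trivially equivalent to itself (the associated step homogeneous quasi-norm is equal to itself), and $\overline{Q}$ is compact in $\R^d\setminus\{0\}$ by assumption, so the roles of both $P$ and $Q$ in that corollary can be played by the given set $Q$.

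Next, I would observe that with these choices the constant $c = c(A,B) = \ln|\det A|/\ln|\det B|$ reduces to $c = 1$, so $\lfloor c\, i \rfloor = i$ and $\lfloor j/c \rfloor = j$. Moreover, under the definition given just before the corollary, the set $N_i(A) = \{k \in \Z : A^i Q \cap A^k Q \neq \emptyset\}$ is literally the set $J_i$ from \eqref{eq:Ji-Ij} specialized to $B=A$ and $P=Q$. Hence Corollary~\ref{lem:IjJiInclusions} yields some $N \in \N$ with
\[
N_i(A) = J_i \subseteq \{ j \in \Z : |j - i| \leq N\},
\]
which is exactly the claimed inclusion.

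Since this is a one-line deduction, I do not anticipate any genuine obstacle; the only thing to double-check is the bookkeeping that $N_i(A)$ really coincides with the specialization of $J_i$ (which is immediate from the definitions), and that the statement of Corollary~\ref{lem:IjJiInclusions} applies uniformly in $i$ with a single constant $N$, which it does.
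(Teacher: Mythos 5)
Your proposal is correct and matches the paper's own proof exactly: the paper also observes that $N_i(A)$ is the specialization of $J_i$ to $B=A$, $P=Q$, notes that a matrix is trivially equivalent to itself, and invokes Corollary~\ref{lem:IjJiInclusions} (where $c=1$) to conclude.
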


\section{Anisotropic Triebel-Lizorkin spaces}
\label{sec:TL0}

Throughout this section, let $A \in \mathrm{GL}(d, \mathbb{R})$ be expansive
and $\Omega_A$ be an associated ellipsoid.

\subsection{Analyzing vectors}

A vector $\varphi \in \mathcal{S}(\mathbb{R}^d)$ is called an \emph{$A$-analyzing vector}
if its Fourier transform $\widehat{\varphi}$ has compact support
\begin{align}\label{eq:analyzing1}
  \supp \widehat{\varphi}
  := \overline{
       \{
         \xi \in \mathbb{R}^d
         :
         \widehat{\varphi} (\xi) \neq 0
       \}
     }
  \subseteq \mathbb{R}^d \setminus \{0\}
\end{align}
and satisfies
\begin{align}\label{eq:analyzing2}
  \sup_{i \in \mathbb{Z}} | \widehat{\varphi}((A^*)^i \xi) | > 0,
  \quad \xi \in \mathbb{R}^d \setminus \{0\}.
\end{align}
In addition to conditions \eqref{eq:analyzing1} and \eqref{eq:analyzing2},
 an $A$-analyzing vector $\varphi$ can be chosen to satisfy
\begin{align}\label{eq:analyzing3}
  \sum_{i \in \mathbb{Z}}
    \widehat{\varphi} ((A^*)^i \xi)
  = 1
  \quad \text{for all} \quad \xi \in \mathbb{R}^d \setminus \{0\},
\end{align}
see, e.g., \cite[Lemma 3.6]{bownik2006atomic} or \cite[Remark 2.3]{cheshmavar2020classification}.
In most situations, we will choose an $A$-analyzing vector that satisfies \eqref{eq:analyzing3}.

\subsection{Triebel-Lizorkin spaces}
\label{sec:TL}
Let $\varphi \in \mathcal{S}(\mathbb{R}^d)$ be a fixed $A$-analyzing vector.
For $i \in \mathbb{Z}$, let $\varphi_i := |\det A|^i \varphi (A^i \cdot)$.
The (homogeneous) \emph{anisotropic Triebel-Lizorkin space} $\TL(A)$,
with $p \in (0, \infty)$, $q \in (0,\infty]$ and $\alpha \in \mathbb{R}$,
is defined as the collection of all $f \in \mathcal{S}' / \mathcal{P}$ satisfying
\begin{align} \label{def:TLspace}
  \| f \|_{\TL(A; \varphi)}
  := \bigg\|
       \bigg(
         \sum_{i \in \mathbb{Z}}
           (|\det A|^{\alpha i} \, |f \ast \varphi_i |)^q
       \bigg)^{1/q}
     \bigg\|_{L^p}
  <  \infty,
\end{align}
with the usual modifications for $q = \infty$.
The space $\TLi(A)$ consists of all $f \in \mathcal{S}' / \mathcal{P}$ such that
\[
  \| f \|_{\TLi(A; \varphi)}
  := \sup_{\ell \in \mathbb{Z}, w \in \mathbb{R}^d}
       \bigg(
         \frac{1}{|\det A|^{\ell}}
         \int_{A^{\ell}\Omega_A + w}
           \sum_{i = -\ell}^{\infty}
             (|\det A|^{\alpha i} \, |(f \ast \varphi_i)(x)|)^q
       \;  d x
       \bigg)^{1/q}
  <  \infty
\]
if $q \in (0,\infty)$, and
\[
  \| f \|_{\TLii(A; \varphi)}
  := \sup_{\ell \in \mathbb{Z}, w \in \mathbb{R}^d}
       \sup_{i \in \mathbb{Z}, i \geq - \ell}
         \frac{1}{|\det A|^{\ell}}
         \int_{A^{\ell}\Omega_A + w}
           |\det A|^{\alpha i} \, |(f \ast \varphi_i)(x)|
       \;  dx
  <  \infty.
\]
In \cite{bownik2006atomic} and the introduction of this paper,
the spaces $\TLi(A)$, $q \in (0, \infty)$,
are alternatively defined using the cube $[0,1]^d$ instead of an expansive ellipsoid $\Omega_A$.
However, it is easily seen that both conditions define the same space, see,
e.g., \cite[Lemma 2.2]{koppensteiner2022anisotropic2}.
See also \Cref{thm:maximal_characterizations} below for the equivalent norm on $\TLii(A)$ used in the introduction.

Each space $\TL(A)$ is continuously embedded into $ \mathcal{S}' / \mathcal{P}$
and is complete with respect to the quasi-norm $\| \cdot \|_{\TL}$.
In addition, $\TL(A)$ is independent of the choice of $A$-analyzing vector $\varphi$,
with equivalent quasi-norms for different choices.
See \cite[Section 3]{bownik2006atomic} and \cite[Section 3.3]{bownik2007anisotropic} for details.
We will often simply write $\| \cdot \|_{\TL(A)}$ for $\| \cdot \|_{\TL(A; \varphi)}$
whenever the precise choice of analyzing vector $\varphi$ does not play a role in our arguments.

For $p, q < \infty$, the space $\Schwartz_0(\R^d)$ is a dense subspace of $\TL(A)$.
This fact follows easily from the various atomic and molecular decompositions of $\TL(A)$,
see, e.g., \cite{bownik2006atomic, bownik2007anisotropic, KvVV2021anisotropic, koppensteiner2022anisotropic2}.

\subsection{Maximal characterizations}
\label{sec:maximal}

For $\varphi \in \mathcal{S} (\mathbb{R}^d)$ and $i \in \mathbb{Z}$ and $\beta > 0$,
the associated Peetre-type maximal function $\varphi^{**}_{i, \beta} f : \mathbb{R}^d \to [0,\infty]$
of $f \in \mathcal{S}' (\mathbb{R}^d)$ is defined by
\[
  \varphi^{**}_{i, \beta} f (x)
  := \sup_{z \in \mathbb{R}^d}
      \frac{|(f \ast \varphi_i) (x+z)|}{(1+\rho_A (A^i z))^{\beta}},
  \quad x \in \mathbb{R}^d.
\]

The following theorem provides characterizations of Triebel-Lizorkin spaces
in terms of Peetre-type maximal functions and will play a key role in \Cref{sec:sufficient}.
See \cite{KvVV2021anisotropic,koppensteiner2022anisotropic2} for proofs.

\begin{theorem}[\cite{KvVV2021anisotropic, koppensteiner2022anisotropic2}] \label{thm:maximal_characterizations}
  Let $A \in \mathrm{GL}(d, \mathbb{R})$ be expansive and $\alpha \in \mathbb{R}$.
  Suppose $\varphi \in \mathcal{S}(\mathbb{R}^d)$ satisfies support conditions \eqref{eq:analyzing1}
  and \eqref{eq:analyzing2}.
  Then the following norm equivalences hold:
  \begin{enumerate}[(i)]
   \item For $p \in (0,\infty)$, $q \in (0, \infty]$ and $\beta > \max \{1/p, 1/q\}$,
         \[
           \| f \|_{\TL(A)}
           \asymp \bigg\|
                    \bigg(
                      \sum_{i \in \mathbb{Z}}
                        (|\det A|^{\alpha i}  \varphi^{**}_{i, \beta} f )^q
                    \bigg)^{1/q}
                  \bigg\|_{L^p},
           \quad f \in \SP,
         \]
    with the usual modification for $q = \infty$.
  \item For $q \in (0,\infty)$ and $\beta > 1/q$,
        \[
          \| f \|_{\TLi(A)}
          \asymp \sup_{\ell \in \mathbb{Z}, w \in \mathbb{R}^d}
                   \bigg(
                     \frac{1}{|\det A|^{\ell}}
                     \int_{A^{\ell} \Omega_A + w}
                       \sum_{i = - \ell}^{\infty}
                         (|\det A|^{\alpha i}  \varphi_{i,\beta}^{**} f (x) )^q
              \;       dx
                   \bigg)^{1/q},
          \quad f \in \SP.
        \]

  \item
        \begin{align}\label{eq:Bii}
           \| f \|_{\TLii(A)}
           \asymp \sup_{i \in \mathbb{Z}}
                    |\det A|^{\alpha i}
                    \| f \ast \varphi_i \|_{L^{\infty}},
           \quad f \in \SP.
        \end{align}
  \end{enumerate}
\end{theorem}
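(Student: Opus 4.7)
The plan is to adapt the classical Peetre--Triebel approach to the anisotropic setting, using the anisotropic Hardy--Littlewood maximal operator $M_A$ (averaging over dilated ellipsoids $A^k \Omega_A + x$) in place of the usual one. The three items are treated in turn, with (i) and (ii) both hinging on a common pointwise Peetre-type estimate and (iii) being essentially a reformulation. One direction of (i) is immediate: taking $z = 0$ in the definition of $\varphi^{**}_{i,\beta} f$ gives $|f \ast \varphi_i(x)| \leq \varphi^{**}_{i,\beta} f(x)$, so the right-hand side of the claimed equivalence always dominates $\| f \|_{\TL(A)}$.

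The core step for the reverse direction in (i) is a pointwise estimate of the form
\[
  \bigl( \varphi^{**}_{i,\beta} f (x) \bigr)^{r}
  \lesssim \sum_{|k| \leq L} M_A \bigl( |f \ast \varphi_{i+k}|^{r} \bigr)(x),
\]
valid for some $L \in \N$ and any $0 < r \leq 1$ with $r\beta$ above an anisotropy-dependent threshold. The derivation rests on a reproducing formula based on an auxiliary $A$-analyzing vector $\psi$ satisfying \eqref{eq:analyzing3}, so that by \Cref{lem:NeighborSetControl} only finitely many scales interact via Fourier support overlap; the anisotropic Schwartz decay of $\varphi \ast \psi$, translated into $\rho_A$-decay via \eqref{eq:ExpansiveConsequence}, can then be absorbed into the Peetre denominator $(1 + \rho_A(A^i z))^{\beta}$. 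Choosing $r < \min\{p, q\}$ with $r < 1/\beta$ -- possible precisely when $\beta > \max\{1/p, 1/q\}$ -- and applying the anisotropic vector-valued Fefferman--Stein inequality for $M_A$ (available since $p/r, q/r > 1$) to the sequence $(|f \ast \varphi_i|^{r})_{i \in \Z}$, then re-indexing the finite sum over $|k| \leq L$, yields the reverse bound. For (ii), the same pointwise estimate is combined with a localized variant of the maximal inequality on $A^{\ell} \Omega_A + w$, splitting $f \ast \varphi_k$ into a near-part controlled by the $\TLi(A)$ integral and a far-part whose Schwartz tail is summable over $k \geq -\ell$. Part (iii) reduces to the observation that the essential supremum is recovered from averages over $A^{\ell} \Omega_A + w$ as $\ell \to -\infty$, using continuity of $f \ast \varphi_i$ (which follows from the compact Fourier support of $\varphi_i$).

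The main technical obstacle is the anisotropic Peetre pointwise estimate. In the isotropic case it is classical, but in the anisotropic setting one must carefully track how the Schwartz decay of $\varphi$ transforms under $A^i$-dilation: the two-sided bounds in \eqref{eq:ExpansiveConsequence} must be used to convert Euclidean decay of $|\varphi(A^i \cdot)|$ into $\rho_A$-decay that matches the growth rate of $(1 + \rho_A(A^i z))^{\beta}$, and this matching is what dictates the lower bound required on $\beta$ and hence the admissible range of $r$ in the Fefferman--Stein step. Once that pointwise bound is secured with the correct exponent relation, the remaining ingredients -- Fefferman--Stein, finite-overlap index shifts, and localization for $p = \infty$ -- proceed as in the classical theory, and the detailed execution can be found in \cite{KvVV2021anisotropic, koppensteiner2022anisotropic2}.
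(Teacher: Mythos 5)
The paper's proof of \Cref{thm:maximal_characterizations} is a one-line citation to the authors' earlier papers, so at the level of what is actually argued in this paper, your proposal --- which ends by deferring to the same references --- is aligned with it. Your sketch of the underlying argument (Peetre-type pointwise estimate followed by the anisotropic vector-valued Fefferman--Stein inequality for the $\rho_A$-based maximal operator) is the standard route and is broadly sound.

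However, there is a logic-reversing slip that undermines the key quantitative claim. You write ``Choosing $r < \min\{p,q\}$ with $r < 1/\beta$ --- possible precisely when $\beta > \max\{1/p, 1/q\}$.'' The correct constraint is $r > 1/\beta$: one needs $r\beta > 1$ so that the Peetre denominator $(1+\rho_A(A^i z))^{-r\beta}$ is integrable with respect to Lebesgue measure (since $\rho_A$ is $|\det A|$-homogeneous, the anisotropic threshold is $1$, matching your earlier phrase ``$r\beta$ above a threshold''). With the inequality in that direction, the interval $(1/\beta,\,\min\{p,q\})$ is nonempty precisely when $\beta > \max\{1/p,1/q\}$, which is the stated hypothesis; as you have written it, ``$r < 1/\beta$'' can always be satisfied and imposes nothing on $\beta$, contradicting your own ``precisely when.'' Separately, in (iii), ``continuity of $f \ast \varphi_i$'' is weaker than what is actually needed: passing from averages over $A^{-i}\Omega_A + w$ to $\|f\ast\varphi_i\|_{L^\infty}$ relies on a Plancherel--Polya--Nikolskii-type sub-mean-value estimate for functions with Fourier support in $(A^*)^i \supp\widehat{\varphi}$, which controls oscillation at scale $A^{-i}\Omega_A$ uniformly in $i$, not mere pointwise continuity.
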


\begin{proof}
  Assertion (i) is part of \cite[Theorem 3.5]{KvVV2021anisotropic}
  and holds for general expansive matrices (cf.\ \cite[Remark 3.6]{KvVV2021anisotropic}).
  Similarly, assertions (ii) and (iii) are part of \cite[Theorem 3.3]{koppensteiner2022anisotropic2}
  and \cite[Theorem 4.1]{koppensteiner2022anisotropic2}, respectively.
\end{proof}

Part (iii) of \Cref{thm:maximal_characterizations} shows that $\TLAii$
coincides with the anisotropic Besov space $\BAii$ considered in \cite{bownik2005atomic}.
In \cite[Definition 3.1]{bownik2005atomic}, the space $\BAii$ is defined via the right-hand side
of the equivalence \eqref{eq:Bii}.

\section{Necessary conditions}
\label{sec:necessary}

This section is devoted to the proof of the following theorem involving necessary conditions
for coincidence of two Triebel-Lizorkin spaces.
This theorem corresponds to \Cref{thm:intro} in the introduction.

\begin{theorem}\label{thm:NecessaryCondition}
  Let $A, B \in \GL(d,\R)$ be expansive matrices, $\alpha, \beta \in \R$ and
  $p_1, p_2, q_1, q_2 \in (0,\infty]$.
  If $\TLone (A) = \TLtwo(B)$, then $(p,q,\alpha) := (p_1,q_1,\alpha) = (p_2,q_2,\beta)$.

  Moreover, at least one of the following two cases holds:
  \begin{enumerate}[(i)]
    \item $A$ and $B$ are equivalent,
          or
    \item $\alpha = 0$, $p \in (1,\infty)$, and $q = 2$.
  \end{enumerate}
\end{theorem}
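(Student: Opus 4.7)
My plan is to exploit the cover-theoretic characterization of matrix equivalence in Lemma \ref{lem:hom_covers}: $A$ and $B$ are equivalent if and only if $\sup_i |J_i| + \sup_j |I_j| < \infty$ for the induced covers of $\R^d \setminus \{0\}$. The hypothesis $\TLone(A) = \TLtwo(B)$, combined with the open mapping theorem (both are complete quasi-normed spaces), yields equivalence of the two quasi-norms. From this equivalence I will extract a scaling identity for the weights attached to the covers, and test it against carefully designed auxiliary functions with Fourier support inside specific cells $(A^*)^i Q \cap (B^*)^j P$. The outcome will either force the coincidence of the weights (whence equivalence via Lemma \ref{lem:IjJiInclusions-with-alpha-beta} and Corollary \ref{cor:adjoint_equivalent}), or leave us in the exceptional configuration.

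\textbf{Parameter matching.} Fix analyzing vectors $\varphi$ for $A$ and $\psi$ for $B$ satisfying \eqref{eq:analyzing3}, and set $Q := \supp\widehat{\varphi}$, $P := \supp\widehat{\psi}$. For any pair $(i_0,j_0)$ with $(A^*)^{i_0} Q \cap (B^*)^{j_0} P \neq \emptyset$, I pick $f \in \Schwartz(\R^d)$ whose Fourier transform is supported in that intersection. Standard spectral localization yields
\[
\|f\|_{\TLone(A)} \asymp |\det A|^{\alpha i_0} \|f\|_{L^{p_1}}, \qquad \|f\|_{\TLtwo(B)} \asymp |\det B|^{\beta j_0} \|f\|_{L^{p_2}}.
\]
Applying this to the family $\sum_{y \in Y_n} T_y f$ with well-separated translates (Fourier-support preserving, $L^p$-norm growing like $n^{1/p}$) forces $p_1 = p_2 =: p$. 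Varying $(i_0,j_0)$ over admissible pairs gives $|\det A|^{\alpha i_0} \asymp |\det B|^{\beta j_0}$; the case $\alpha = 0$ degenerates via the covering of $\Z$ to $\beta = 0$ (and vice versa), and the case $\alpha,\beta \neq 0$ yields $\alpha = \beta$ after comparison with the self-pairing. Superposing bumps with Fourier supports in well-separated $A^*$-cells and computing both norms gives $q_1 = q_2 =: q$.

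\textbf{Case $\alpha \neq 0$.} Now $|\det A|^{\alpha i} \asymp |\det B|^{\alpha j}$ holds for every $(i,j)$ with $(A^*)^i Q \cap (B^*)^j P \neq \emptyset$. This is exactly the hypothesis of Lemma \ref{lem:IjJiInclusions-with-alpha-beta} (with $\beta$ replaced by $\alpha$), and it therefore yields uniform bounds on $|J_i|$ and $|I_j|$. Lemma \ref{lem:hom_covers} applied to $A^*$ and $B^*$ gives $\rho_{A^*} \asymp \rho_{B^*}$, and Corollary \ref{cor:adjoint_equivalent} then gives $A \sim B$.

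\textbf{Case $\alpha = 0$.} Here the scaling identity is vacuous. I argue by contrapositive: assume $A \not\sim B$, so by Lemma \ref{lem:hom_covers} at least one of $\sup_i |J_i|$, $\sup_j |I_j|$ is infinite; by symmetry assume $\sup_j |I_j| = \infty$. For each large $n$, pick $j_0 = j_0(n) \in \Z$ with $|I_{j_0}|$ much larger than $n$, then thin out to obtain indices $i_1 < \dots < i_n \in I_{j_0}$ whose $A^*$-cover neighborhoods $N_{i_k}(A^*)$ are pairwise disjoint (using Corollary \ref{lem:NeighborSetControl}). Choose interior points $\xi_k \in (A^*)^{i_k} Q \cap (B^*)^{j_0} P$ and a smooth bump $g$ whose Fourier transform is supported in a small neighborhood of the origin, so that $f_k := M_{\xi_k} g$ has Fourier support inside the chosen intersection. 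Assume $p < \infty$ and form the Rademacher sum $F_\eps := \sum_{k=1}^n \eps_k f_k$. Because $\alpha = 0$ erases all weights and the $A^*$-cells are disjoint, the square function in the $\TL(A)$-norm decouples and
\[
\|F_\eps\|_{\TL(A)} \asymp n^{1/q}\, \|g\|_{L^p} .
\]
Because all $f_k$ have Fourier support in a single $B^*$-cell, spectral localization gives $\|F_\eps\|_{\TL(B)} \asymp \|F_\eps\|_{L^p}$, and Khintchine's inequality yields $\EE \|F_\eps\|_{L^p} \asymp n^{1/2} \|g\|_{L^p}$. Matching the two norms as $n \to \infty$ forces $q = 2$; combined with the already-known classification of the anisotropic Hardy spaces \cite{bownik2003anisotropic} to dispose of the $p \leq 1$ subcase, this leaves exactly the exceptional configuration $\alpha = 0$, $p \in (1,\infty)$, $q = 2$.

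\textbf{Case $\alpha = 0$, $p = \infty$.} Khintchine is unavailable in $L^\infty$, so I would instead invoke the duality between $\dot{\mathbf{F}}^0_{\infty,q}(A)$ and $\dot{\mathbf{F}}^0_{1,q'}(A)$ (modulo polynomials) to transfer the identity of spaces to the $p = 1$ side, and then apply the previously established Khintchine-based argument to conclude $A \sim B$.

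\textbf{Main obstacle.} The main obstacle is the $p = \infty$ regime, where Khintchine's inequality is not directly applicable and one must rely on duality, which requires verifying compatibility of the dual identifications for $A$ and $B$. A secondary technical point is the simultaneous construction of the bump functions $f_k$: they must be localized in frequency inside specific intersections of $A^*$- and $B^*$-cells (whose geometry is opaque for unrelated matrices), while remaining tight enough in space for the $A$- and $B$-side computations to decouple cleanly.
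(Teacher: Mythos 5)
Your proposal follows the same overall strategy as the paper: constructing Fourier-localized bump functions and comparing their $\TL$-norms against $L^p$-norms (the paper's Propositions~\ref{prop:TL-norm-estimate-simple-Fourier-support} and \ref{prop:TL-norm-estimate-sequence}), invoking the cover criterion of \Cref{lem:hom_covers}, running a Khintchine argument to obtain $q=2$ when $\alpha=0$ and $p<\infty$, falling back on the Hardy space classification for $p\le 1$, and transferring the $p=\infty$ case to $p=1$ by duality. The route to $p_1=p_2$ via sums of well-separated translates (using that translations preserve Fourier support) is a legitimate alternative to the paper's $\delta$-rescaling with $M_\eta\phi_\delta$, and it would work with the same Proposition~\ref{prop:TL-norm-estimate-simple-Fourier-support}-type estimate as input.

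However, there are two concrete gaps that would need to be filled before this becomes a proof.

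\textbf{The deduction that $\alpha=\beta$.} You write that once $|\det A|^{\alpha i}\asymp|\det B|^{\beta j}$ on intersecting cells, the conclusion $\alpha=\beta$ for $\alpha,\beta\ne 0$ follows ``after comparison with the self-pairing,'' but no argument is given, and this is not obvious from the weight relation alone: that relation merely forces $i\approx \frac{\beta}{\alpha}\frac{1}{c}j$ on $I_j$, which is consistent with any value of $\beta/\alpha$. What is actually needed is a measure-counting argument: since the $A^\ast$-cells cover $\R^d\setminus\{0\}$, one has $(B^\ast)^jP\subseteq\bigcup_{i\in I_j}(A^\ast)^iQ$, so $|\det B|^j\lesssim\sum_{i\in I_j}|\det A|^i\lesssim|\det B|^{(\beta/\alpha)j}$ for all $j\in\Z$, which forces $\beta/\alpha=1$. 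Without something of this sort, the first assertion $(p,q,\alpha)=(p_1,q_1,\alpha)=(p_2,q_2,\beta)$ is not established.

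\textbf{The case $p=\infty$.} Your plan is to dualize $\TLnaked^0_{\infty,q}$ against $\TLnaked^0_{1,q'}$. This is what the paper does, but the plan has two holes. First, for $q<1$ there is no conjugate exponent and the duality \cite[Theorem~4.8]{bownik2008duality} is unavailable, so one first needs a separate reduction (the paper's \Cref{lem:PInftyBanachReduction}) showing that $A\not\sim B$ forces $q\ge 1$: this is itself a quantitative estimate obtained by comparing $\|f_c\|_{\TLnaked^0_{\infty,q}(A)}\asymp\delta^d\|c\|_{\ell^q}$ against $\|f_c\|_{\TLnaked^0_{\infty,q}(B)}\asymp\|f_c\|_{L^\infty}\lesssim\delta^d\|c\|_{\ell^1}$. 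Second, for $q=1$ one has $q'=\infty$ and the cited duality theorem only applies for $q'\in[1,\infty)$; the paper has to prove a separate dual \emph{characterization} of the $\dot{\mathbf{F}}^0_{1,\infty}$-norm (\Cref{prop:dualnormF1inf}, built on the sequence-space Carleson-measure arguments in Appendix~B). Your ``invoke the duality'' step therefore needs to be split into $q>1$ and $q=1$, and the $q=1$ piece requires nontrivial additional work.
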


In the proof of \Cref{thm:NecessaryCondition}, we will often actually
use the norm equivalence
\begin{align} \label{eq:necessary_norm_equiv}
  \| f \|_{\TLone (A)} \asymp \| f \|_{\TLtwo (B)}, \quad \text{for all} \quad f \in \TLone(A) = \TLtwo(B)
\end{align}
rather than the coincidence of the spaces $\TLone (A) = \TLtwo(B)$.
By a standard density argument, the norm equivalence \eqref{eq:necessary_norm_equiv} is equivalent
to the same condition being satisfied for all elements in a dense subspace.
Both facts are contained in the following simple lemma,
which will often be used without further mentioning.

\begin{lemma}\label{lem:CoincidenceImpliesNormEquivalence}
  Let $A, B \in \GL(d,\R)$ be expansive matrices, $\alpha, \beta \in \R$ and
  $p_1, p_2, q_1, q_2 \in (0,\infty]$.

  If $\TLone (A) = \TLtwo (B)$, then there exists a constant
  $C \geq 1$ such that
  \begin{equation}
    \label{eq:CoincidenceImpliesNormEquivalence}
    \frac{1}{C} \| f \|_{\TLone (A)} \leq \| f \|_{\TLtwo(B)} \leq C \| f \|_{\TLone(A)}
  \end{equation}
  for all $f \in \TLone(A) = \TLtwo(B)$.

  On the other hand, if $p_1, p_2, q_1, q_2 < \infty$ and
  \Cref{eq:CoincidenceImpliesNormEquivalence} holds for all
  $f \in \Schwartz_0(\R^d)$, then $\TLone (A) = \TLtwo (B)$.
\end{lemma}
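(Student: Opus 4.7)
The plan is to handle both statements via standard functional-analytic machinery, exploiting the two facts recalled in \Cref{sec:TL}: each space $\TL(A)$ is complete with respect to its quasi-norm, and embeds continuously into $\mathcal{S}'/\mathcal{P}$. For $p, q < \infty$ we additionally use density of $\Schwartz_0(\R^d)$.

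For the first assertion I would invoke the closed graph theorem in the F-space setting. Since a complete quasi-normed space is an F-space (the quasi-norm induces a complete translation-invariant metric, up to a suitable power), both $\TLone(A)$ and $\TLtwo(B)$ are F-spaces. Consider the identity map $\iota : \TLone(A) \to \TLtwo(B)$, which is well defined by the hypothesis $\TLone(A) = \TLtwo(B)$. To verify that its graph is closed, suppose $f_n \to f$ in $\TLone(A)$ and $\iota f_n = f_n \to g$ in $\TLtwo(B)$. Both convergences imply convergence in $\mathcal{S}'/\mathcal{P}$ because of the continuous embeddings $\TLone(A) \hookrightarrow \mathcal{S}'/\mathcal{P}$ and $\TLtwo(B) \hookrightarrow \mathcal{S}'/\mathcal{P}$. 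Hence $f = g$ in $\mathcal{S}'/\mathcal{P}$, so $(f, g)$ belongs to the graph of $\iota$. The closed graph theorem therefore yields a constant $C_1$ with $\|f\|_{\TLtwo(B)} \leq C_1 \|f\|_{\TLone(A)}$. Applying the same argument with the roles of $A$ and $B$ swapped produces the opposite estimate, giving \Cref{eq:CoincidenceImpliesNormEquivalence}.

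For the second assertion, I would use the density of $\Schwartz_0(\R^d)$ in $\TLone(A)$ and $\TLtwo(B)$ (valid since $p_1, p_2, q_1, q_2 < \infty$, as recalled at the end of \Cref{sec:TL}), together with completeness. Given $f \in \TLone(A)$, pick a sequence $(f_n) \subseteq \Schwartz_0(\R^d)$ with $f_n \to f$ in $\TLone(A)$. Then $(f_n)$ is Cauchy in $\TLone(A)$, and the assumed norm equivalence on $\Schwartz_0(\R^d)$ yields that $(f_n)$ is Cauchy in $\TLtwo(B)$ as well; by completeness it converges to some $g \in \TLtwo(B)$. Both limits agree in $\mathcal{S}'/\mathcal{P}$ by continuity of the embeddings, so $f = g \in \TLtwo(B)$, proving $\TLone(A) \subseteq \TLtwo(B)$. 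The converse inclusion is obtained symmetrically.

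The argument is essentially routine once the right abstract framework is identified; the only subtle point is to ensure that the closed graph theorem is available, which requires verifying that completeness of the quasi-norms $\|\cdot\|_{\TLone(A)}$ and $\|\cdot\|_{\TLtwo(B)}$ indeed makes them F-spaces. This is standard for $p$-Banach spaces via the Aoki--Rolewicz theorem, so no genuine difficulty arises; the main obstacle is simply citing or recalling the relevant completeness and embedding results already established in \cite{bownik2006atomic, bownik2007anisotropic}.
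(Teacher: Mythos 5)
Your argument is correct and follows the same route as the paper: the first part via the closed graph theorem for $F$-spaces using the continuous embedding into $\mathcal{S}'/\mathcal{P}$, and the second via density of $\Schwartz_0(\R^d)$ plus completeness and agreement of limits in $\mathcal{S}'/\mathcal{P}$. The only cosmetic difference is in how the $F$-space structure is justified: you invoke Aoki--Rolewicz abstractly, while the paper cites its own earlier lemmas showing $\|\cdot\|_{\TL}$ is an $r$-norm with $r=\min\{p,q,1\}$, which gives the $F$-space metric directly without needing to re-quasi-norm.
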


\begin{proof}
  If $\TLone (A) = \TLtwo (B)$, then the identity map
  $\iota: \TLone(A) \to \TLtwo(B), f \mapsto f$ is well-defined.
  Furthermore, since both $\TLone(A)$ and $\TLtwo(B)$ continuously embed into
   $\Schwartz'/\CalP = \Schwartz_0'$ (see \Cref{sec:TL}),
  it is easy to see that $\iota$ has a closed graph.
  The norm estimates \eqref{eq:CoincidenceImpliesNormEquivalence} follow therefore
  by the closed graph theorem, see, e.g., \cite[Theorem~2.15]{RudinFA}.
  More precisely, since, by \cite[Lemma 5.4]{KvVV2021anisotropic} and \cite[Lemma 5.6]{koppensteiner2022anisotropic2}, both $\| \cdot \|_{\TLone(A)}$ and $\| \cdot \|_{\TLtwo(B)}$ are $r$-norms for $r := \min\{p,q,1\}$,
  i.e., both quasi-norms satisfy $\| f_1 + f_2 \|^r \leq \| f_1 \|^r + \| f_2 \|^r$,
  it follows that $\TLone(A)$ and $\TLtwo(B)$ are $F$-spaces in the sense of \cite[Section~1.8]{RudinFA}.
  Therefore,  the closed graph theorem (\mbox{\cite[Theorem~2.15]{RudinFA}}) applies to $\iota$
  and shows that it is bounded, so that
  \[
    \| f \|_{\TLtwo(B)} \lesssim \| f \|_{\TLone(A)}
    \quad \text{for all} \quad
    f \in \TLone(A) = \TLtwo(B).
  \]
  The converse estimate is shown in the same way.

  For the second part of the lemma, recall that $\Schwartz_0(\R^d)$ is norm
  dense in $\TLone(A)$ for $p_1,q_1 < \infty$ (cf.\ \Cref{sec:TL}).
  Hence, for arbitrary $f \in \TLone(A)$, there exists a sequence
  $(f_n)_{n = 1}^\infty$ in $\Schwartz_0(\R^d)$ converging to $f$ in $\TLone(A)$.
  Therefore, if \eqref{eq:CoincidenceImpliesNormEquivalence} holds for all
  $f_n \in \Schwartz_0(\R^d)$, then $(f_n)_{n = 1}^{\infty}$ is a Cauchy sequence
  in $\TLtwo(B)$ converging to some $g \in \TLtwo(B)$.
  Since convergence in $\TLone(A)$, respectively $\TLtwo(B)$, implies weak
  convergence in $\SP$ (cf.\ \Cref{sec:TL}), it follows that $f = g \in \TLtwo(B)$.
  This shows $\TLone(A) \subseteq \TLtwo(B)$.
  The reverse inclusion is shown similarly.
\end{proof}

\subsection{Preparations and notation}%
\label{sub:NecessityProofNotation}

This section sets up some essential objects and notation that will be
used for the proof of \Cref{thm:NecessaryCondition}.
This notation will be kept throughout \Cref{sec:necessary}.

Let $A, B \in \mathrm{GL}(d, \mathbb{R})$ be expansive matrices.
Fix analyzing vectors $\varphi \in \Schwartz(\R^d)$ and
$\psi \in \Schwartz(\R^d)$ satisfying \Cref{eq:analyzing3} for $A$ and $B$, respectively.
Then
\[
  Q := \bigl\{ \xi \in \R^d \colon \widehat{\varphi} (\xi) \neq 0 \bigr\}
  \qquad \text{and} \qquad
  P := \bigl\{ \xi \in \R^d \colon \widehat{\psi}(\xi) \neq 0 \bigr\},
\]
are open, relatively compact sets in $\R^d \setminus \{0\}$.
In the following, we mainly consider the covers $\bigl( (A^\ast)^i Q \bigr)_{i \in \mathbb{Z}}$ and
$\bigl( (B^\ast)^j P \bigr)_{j \in \mathbb{Z}}$ of $\R^d \setminus \{ 0 \}$.
In particular, we will take the sets $I_j$ and $J_i$ defined in \Cref{eq:Ji-Ij} to be defined
with respect to these two coverings; explicitly, this means
\begin{equation}
  I_j := \big\{ k \in \Z \colon (A^\ast)^k Q \cap (B^\ast)^j P \neq \emptyset \big\}
  \quad \text{and} \quad
  J_i := \big\{ k \in \Z \colon (B^\ast)^k P \cap (A^\ast)^i Q \neq \emptyset \big\}
  \label{eq:SpecializedIntersectionSets}
\end{equation}
for $i, j \in \Z$.
Furthermore, for $i \in \mathbb{Z}$, we will use the index sets
\begin{equation*}
  N_i (A^\ast):=  \big\{ j \in \Z \colon (A^\ast)^i Q \cap (A^\ast)^j Q \neq \emptyset \big\}
  \quad \text{and} \quad
  N_i (B^\ast):=  \big\{ j \in \Z \colon (B^\ast)^i P \cap (B^\ast)^j P \neq \emptyset \big\}
  .
\end{equation*}
As shown in \Cref{lem:NeighborSetControl}, there exists $N = N(A,B,Q,P) \in \N$ satisfying
\begin{equation}
  N_i(A^\ast) \cup N_i(B^\ast) \subseteq \{ j \in \Z \colon |j - i| \leq N \}
  \qquad \text{for all} \quad i \in \N .
  \label{eq:NeighborControl}
\end{equation}
Throughout, we fix such an $N$ and define the functions
\[
  \Phi := \sum_{i=-N}^{N} \varphi_i
  \qquad \text{and} \qquad
  \Psi := \sum_{j=-N}^{N} \psi_j
  .
\]
In view of \Cref{eq:analyzing3} and because
$(A^\ast)^i Q \cap Q \neq \emptyset$ can only hold if $|i| \leq N$ by
\Cref{eq:NeighborControl}, it follows that $\widehat{\Phi} \equiv 1$
on $Q$ and $\widehat{\Psi} \equiv 1$ on $P$.
In particular, $\Phi$ and $\Psi$ satisfy the analyzing vector conditions
\eqref{eq:analyzing1} and \eqref{eq:analyzing2} for $A$ and $B$, respectively.

In addition to the above, we fix throughout a non-zero function
$\phi \in \Schwartz(\R^d)$ satisfying $\widehat{\phi} \geq 0$ and
$\supp \widehat{\phi} \subseteq B_1(0)$.
For $\delta > 0$, define
\[
  \phi_\delta (x)
  := \delta^d \, \phi(\delta x).
\]
Then $\widehat{\phi_\delta}(\xi) = \widehat{\phi}(\xi/\delta)$ and thus
$\supp \widehat{\phi_\delta} \subseteq B_\delta(0)$.

\subsection{Norm estimates for auxiliary functions}
\label{sec:norm_estimates}

This subsection consists of two estimates of the Triebel-Lizorkin
norms of functions with specific Fourier support.
These functions play an essential role in our proof of \Cref{thm:NecessaryCondition} and
will be used in the following subsections.

\begin{proposition}
  \label{prop:TL-norm-estimate-simple-Fourier-support}
  Let $A \in \GL(d,\R)$ be expansive, $\alpha \in \R$ and $p, q \in (0, \infty]$.
  If $f \in \Schwartz(\R^d)$ satisfies
  $\supp \widehat{f} \subseteq (A^\ast)^{i_0}Q$ for $i_0 \in \Z$, then
  \begin{equation}\label{eq:TL-norm-estimate-simple-Fourier-support}
    \|f\|_{\TL(A)} \asymp |\det A|^{\alpha i_0} \|f\|_{L^p},
  \end{equation}
  with an implicit constant independent of $i_0$ and $f$.
\end{proposition}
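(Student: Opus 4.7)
My plan is to exploit the narrow Fourier support of $f$ to collapse the defining sum of $\|f\|_{\TL(A)}$ to only finitely many nonzero terms, and then to reduce the claim to a scale-invariant $L^p$-comparison via a dilation argument.

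First I would observe that $\widehat{f \ast \varphi_i} = \widehat{f} \, \widehat{\varphi_i}$ is supported in $(A^\ast)^{i_0} Q \cap (A^\ast)^i Q$, which by the choice of $N$ in \eqref{eq:NeighborControl} is empty whenever $|i - i_0| > N$. Thus $f \ast \varphi_i \equiv 0$ for all such $i$, leaving at most $2N+1$ nonzero terms in the Triebel--Lizorkin sum. For the surviving indices, $|\det A|^{\alpha i}$ is uniformly comparable to $|\det A|^{\alpha i_0}$, and since the $\ell^q$-norm of a sequence of bounded length is uniformly comparable to its $\ell^1$-norm, one obtains (for $p < \infty$)
\begin{equation*}
  \|f\|_{\TL(A)}
  \asymp |\det A|^{\alpha i_0}
  \, \Bigl\| \sum_{|i - i_0| \leq N} |f \ast \varphi_i| \Bigr\|_{L^p},
\end{equation*}
with implicit constants depending only on $\alpha$, $q$, and $N$. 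The cases $p = \infty$ admit the same finite-sum reduction, since the inner sum in the local-average or supremum definitions of $\|\cdot\|_{\TLi(A)}$ and $\|\cdot\|_{\TLii(A)}$ collapses to boundedly many terms regardless of the outer parameter $\ell$.

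Next I would pass to the scale $i_0 = 0$ by setting $g := f \circ A^{-i_0}$. A direct calculation yields $g \ast \varphi_j = (f \ast \varphi_{j + i_0}) \circ A^{-i_0}$, and a change of variables shows that both sides of the claimed equivalence transform by the same power of $|\det A|$ under this dilation. Since $\supp \widehat{g} \subseteq Q$ whenever $\supp \widehat{f} \subseteq (A^\ast)^{i_0} Q$, this reduces the proposition to the fixed-scale statement $\|g\|_{\TL(A)} \asymp \|g\|_{L^p}$ for $g \in \Schwartz(\R^d)$ with $\supp \widehat{g} \subseteq Q$.

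For this fixed-scale statement, the lower bound $\|g\|_{L^p} \lesssim \bigl\| \sum_{|i| \leq N} |g \ast \varphi_i| \bigr\|_{L^p}$ follows from the reproducing formula $g = \sum_{i \in \Z} g \ast \varphi_i$ coming from \eqref{eq:analyzing3}, together with $g \ast \varphi_i \equiv 0$ for $|i| > N$, which yields pointwise $|g| \leq \sum_{|i| \leq N} |g \ast \varphi_i|$. For the reverse inequality it suffices to show $\|g \ast \varphi_i\|_{L^p} \lesssim \|g\|_{L^p}$ for each $|i| \leq N$; for $p \in [1, \infty]$ this is immediate from Young's inequality via $\|\varphi_i\|_{L^1} = \|\varphi\|_{L^1}$. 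The main technical obstacle is $p \in (0, 1)$, where I would employ the standard band-limited argument: since $\supp \widehat{g} \subseteq Q$ is a fixed compact set, the anisotropic Peetre-type maximal function $g^{\ast\ast}_\beta(x) := \sup_z |g(x-z)|/(1+\rho_A(z))^\beta$ dominates $|g \ast \varphi_i(x)|$ pointwise (by inserting the Peetre weight inside the convolution and exploiting Schwartz decay of $\varphi$), and satisfies $\|g^{\ast\ast}_\beta\|_{L^p} \lesssim \|g\|_{L^p}$ for $\beta$ sufficiently large by the Peetre maximal inequality for band-limited functions (cf.\ the ingredients behind \Cref{thm:maximal_characterizations}). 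The $p = \infty$ cases are handled analogously, with the $L^\infty$-comparison for the Carleson-type norm $\|\cdot\|_{\TLi(A)}$ relying on the well-known equivalence of $L^\infty$ with BMO-type norms for functions having fixed compact Fourier support away from the origin.
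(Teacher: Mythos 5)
Your dilation reduction to the fixed scale $i_0 = 0$ is a genuinely different organization of the proof than the one in the paper, which works directly at scale $i_0$ throughout. The paper's upper bound for $p \in (0,1)$ goes through the band-limited convolution estimate of \Cref{cor:ConvenientConvolutionRelation} (a quantified form of Triebel's convolution lemma), whereas you reach for the Peetre-type maximal inequality; these are essentially interchangeable and both are available from the machinery behind \Cref{thm:maximal_characterizations}. For the lower bound the paper uses the auxiliary analyzing vector $\Phi = \sum_{|i|\le N}\varphi_i$ together with the identity $f \ast \Phi_{i_0} = f$ and the independence of $\TL(A)$ of the analyzing vector; your reproducing-formula argument $g = \sum_{|i|\le N} g\ast\varphi_i$ is the same idea in disguise. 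What your route buys is that uniformity in $i_0$ comes for free from the exactness of the rescaling, and you should verify (as is indeed true) that the rescaling respects not only the $p < \infty$ norm but also the Carleson-type $\TLi(A)$ expression, which requires a short change-of-variables check on the outer supremum over $\ell, w$.

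The one genuine gap is the case $p = \infty$, $q \in (0,\infty)$. There the $\TLi(A)$ quasi-norm is a local-average (tent/Carleson) expression, not the $L^\infty$-norm of a pointwise quantity, so the lower bound $\|g\|_{L^\infty} \lesssim \|g\|_{\TLi(A)}$ for a band-limited $g$ is not covered by the finite-sum reduction plus reproducing formula that works for $p < \infty$; your appeal to a ``well-known equivalence of $L^\infty$ with BMO-type norms for functions having fixed compact Fourier support away from the origin'' is too vague to carry the step. The paper closes this by citing the continuous embedding $\TLi(A) \hookrightarrow \TLii(A)$ from \cite[Theorem~4.1]{koppensteiner2022anisotropic2}, then using the formula \eqref{eq:Bii} for the $\TLii(A)$-norm together with $f\ast\Phi_{i_0}=f$. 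To make your argument complete you need to cite or reprove that embedding (or an equivalent Plancherel--Polya type inequality for the anisotropic tent norm); without it, the $p=\infty$, $q<\infty$ case is not established.
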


\begin{proof}
  With notation as in \Cref{sub:NecessityProofNotation}, we start by collecting some
  basic facts about the convolutions $f \ast \varphi_i$ and
  $f \ast \Phi_{i_0}$ for $f$ as in the statement of the proposition.
  First, note that since $\widehat{\varphi_i} \equiv 0$ outside of
  $(A^\ast)^{i}Q$, it follows that $f \ast \varphi_i = 0$ whenever
  $(A^\ast)^{i_0}Q \cap (A^\ast)^{i}Q = \emptyset$,
  which holds whenever $|i-i_0| > N$, by \Cref{eq:NeighborControl}.
  Therefore,
  \begin{equation}
    \label{eq:lem1-f-conv-varphi}
    f \ast \varphi_i \equiv 0 \qquad \text{for }|i-i_0|> N.
  \end{equation}
  For the convolution  $f \ast \Phi_{i_0}$ observe that $\widehat{\Phi_{i_0}} \equiv 1$ on
  $(A^\ast)^{i_0}Q$ by construction, and therefore
  \begin{equation}
    \label{eq:lem1-f-conv-Phi}
    f \ast \Phi_{i_0} = \Fourier^{-1}(\widehat{f} \cdot
  \widehat{\Phi_{i_0}}) = f.
  \end{equation}

  In the remainder of this proof, we deal with the cases $p < \infty$,
  $p = \infty$ and $q < \infty$, and $p = q = \infty$ separately.
  \\~\\
  \textbf{Case 1:} $p \in (0, \infty)$.
  For the upper bound in \Cref{eq:TL-norm-estimate-simple-Fourier-support}, we use
  \eqref{eq:lem1-f-conv-varphi} to obtain
  \begin{equation*}
    \| f \|_{\TL(A;\varphi)}
     = \bigg\|
          \Big\|
            \Big( |\det A|^{\alpha i}
              |f \ast \varphi_i|
            \Big)_{i \in \Z}
          \Big\|_{\ell^q}
        \bigg\|_{L^p}
     \lesssim_{p,q,N} \sum_{i = i_0-N}^{i_0 + N} |\det A|^{\alpha i} \| f \ast \varphi_i \|_{L^p}.
   \end{equation*}
   If $p \in [1,\infty)$, then Young's inequality shows
   \begin{equation*}
     \| f \ast \varphi_i \|_{L^p}
     \leq \| f \|_{L^p}  \| \varphi_i \|_{L^1}
     \lesssim_{\varphi} \| f \|_{L^p}.
   \end{equation*}
   If $p \in (0,1)$, then, since
   \(
     \supp \widehat{f}, \supp \widehat{\varphi_i}
     \subseteq \bigcup_{\ell = -N}^N (A^\ast)^{i_0 + \ell} \overline{Q}
   \)
   for $|i - i_0| \leq N$, an application of \Cref{cor:ConvenientConvolutionRelation} yields
   \begin{align*}
    \| f \ast \varphi_i \|_{L^p}
    &\lesssim_{A,Q,N,p} |\det A|^{i_0  (\frac{1}{p} - 1)}
                        \| f \|_{L^p}
                        \| \varphi_i \|_{L^p} \\
    &= |\det A|^{(i_0 - i) (\frac{1}{p} - 1)} \| f \|_{L^p} \| \varphi \|_{L^p} \\
    &\lesssim_{A,N,\varphi,p} \| f \|_{L^p}.
  \end{align*}
  Consequently, for arbitrary $p \in (0,\infty]$,
  \begin{equation*}
    \| f \|_{\TL(A;\varphi)}
    \lesssim_{p,q,N} \sum_{i = i_0-N}^{i_0 + N} |\det A|^{\alpha i} \| f \ast \varphi_i \|_{L^p}
    \lesssim_{A,Q,N, \alpha, p,\varphi} |\det A|^{\alpha i_0} \| f \|_{L^p},
  \end{equation*}
  which proves the desired upper bound.

  For the lower bound, using \Cref{eq:lem1-f-conv-Phi} and the equivalence
  $\| \cdot \|_{\TL(A; \varphi)} \asymp \| \cdot \|_{\TL(A; \Phi)}$ (see \Cref{sec:TL}) gives
  \begin{align*}
    \| f \|_{\TL (A;\varphi)}
    & \asymp_{\varphi,N,p,q,A,\alpha} \| f \|_{\TL (A;\Phi)}
    = \bigg\|
        \Big\|
        \Big(
        |\det A|^{\alpha i}
            |f \ast \Phi_i|
          \Big)_{i \in \Z}
        \Big\|_{\ell^q}
      \bigg\|_{L^p} \\
    & \geq |\det A|^{\alpha i_0} \| f \ast \Phi_{i_0} \|_{L^p}
    = |\det A|^{\alpha i_0} \| f \|_{L^p},
  \end{align*}
  as required.
  \\~\\
  \textbf{Case 2:} $p = \infty$, $q \in (0, \infty)$.
  As in the previous case, we use \Cref{eq:lem1-f-conv-varphi} for the upper estimate.
  This yields
  \begin{align*}
    \|f\|_{\TLi(A;\varphi)}
    &= \sup_{\ell \in \Z, w \in \R^d}
      \bigg( \frac{1}{|\det A|^{\ell}} \int_{A^{\ell} \Omega_A + w}
      \sum_{i = - \ell}^{\infty} (|\det A|^{\alpha i}
      |(f \ast \varphi_i)(x)|)^q \; dx \bigg)^{1/q} \\
    &\leq \sup_{\ell \in \Z, w \in \R^d}
      \bigg( \frac{1}{|\det A|^{\ell}} \int_{A^{\ell} \Omega_A + w}
      \sum_{i = i_0 -N}^{i_0+N} (|\det A|^{\alpha i}
      |(f \ast \varphi_i)(x)|)^q \; dx \bigg)^{1/q} \\
    &\lesssim_{A, N, q, \alpha} |\det A|^{\alpha i_0}  \sum_{i = i_0 -N}^{i_0+N}
      \| f \ast \varphi_i\|_{L^\infty} \\
    &\leq |\det A|^{\alpha i_0}  \sum_{i = i_0 -N}^{i_0+N}
      \| f \|_{L^\infty}  \| \varphi_i\|_{L^1} \\
    & \lesssim_{N, \varphi}  |\det A|^{\alpha i_0} \| f \|_{L^\infty}.
  \end{align*}
  For the lower bound, we use the continuous embedding
  $\TLi(A) \hookrightarrow \TLii(A)$ (cf.\ \cite[Theorem 4.1]{koppensteiner2022anisotropic2}),
  the norm equivalence $\| \cdot \|_{\TLii(A; \varphi)} \asymp \| \cdot \|_{\TLii(A; \Phi)}$,
  and \Cref{eq:Bii} to obtain
  \begin{align*}
    \|f\|_{\TLi(A;\varphi)}
    &\gtrsim_{\varphi,q,A,\alpha} \|f\|_{\TLii(A;\varphi)}
    \asymp_{\varphi, N, A,\alpha} \|f\|_{\TLii(A;\Phi)}
    \asymp_{\varphi,N,A,\alpha} \sup_{i \in \Z} |\det A|^{\alpha i} \| f \ast \Phi_i \|_{L^\infty} \\
    &\geq |\det A|^{\alpha i_0} \| f \ast \Phi_{i_0} \|_{L^\infty}
    = |\det A|^{\alpha i_0} \| f \|_{L^\infty},
  \end{align*}
  where the final step follows from \Cref{eq:lem1-f-conv-Phi}.
  \\~\\
  \textbf{Case 3:} $p =q = \infty$.
  The lower bound $ \|f\|_{\TLii(A;\varphi)} \gtrsim |\det A|^{\alpha i_0} \| f
  \|_{L^\infty}$ has been shown in the previous case already.
  For the reverse, observe that \eqref{eq:Bii} and \eqref{eq:lem1-f-conv-varphi} yield
  \begin{align*}
    \|f\|_{\TLii(A;\varphi)}
    &\asymp \sup_{i \in \Z} |\det A|^{\alpha i} \| f \ast \varphi_i \|_{L^\infty}
    = \sup_{|i - i_0| \leq N} |\det A|^{\alpha i} \| f \ast \varphi_i \|_{L^\infty} \\
    &\leq \sup_{|i - i_0| \leq N} |\det A|^{\alpha i} \| f\|_{L^\infty} \dot \|\varphi_i \|_{L^1}
    \lesssim_{N, A, \alpha, \varphi} |\det A|^{\alpha i_0} \| f\|_{L^\infty},
  \end{align*}
  which completes the proof.
\end{proof}

The following simple consequence is what actually will be used in obtaining necessary conditions
for the coincidence of Triebel-Lizorkin spaces.

\begin{corollary}
  \label{cor:detA-equiv-detB}
  Let $A, B \in \GL(d,\R)$ be expansive, $\alpha, \beta \in \R$ and
  $p_1, p_2, q_1, q_2 \in (0,\infty]$.

  Suppose that $\TLone(A) = \TLtwo(B)$.
  If $(A^\ast)^i Q \cap (B^\ast)^j P \neq \emptyset$ for some $i, j \in \mathbb{Z}$,
  then there exists $\delta_0 = \delta_0(i,j) > 0$ such that for all $0 < \delta \leq \delta_0$,
  it holds that
  \[
    |\det A|^{\alpha i}  \delta^{d  (1-1/p_1)}
    \asymp |\det B|^{\beta j}  \delta^{d  (1-1/p_2)},
  \]
  where the implicit constants are independent of $i, j, \delta, \delta_0$.
\end{corollary}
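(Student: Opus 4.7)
The plan is to feed a single test function into \Cref{prop:TL-norm-estimate-simple-Fourier-support} from both sides. Specifically, I want to produce $f_\delta \in \Schwartz(\R^d)$ whose Fourier support sits simultaneously inside $(A^*)^i Q$ and inside $(B^*)^j P$, apply the proposition for $A$ (with exponent $\alpha$ and integrability $p_1$) and for $B$ (with exponent $\beta$ and integrability $p_2$), and then chain the two equivalences through the norm equivalence $\|\cdot\|_{\TLone(A)} \asymp \|\cdot\|_{\TLtwo(B)}$ supplied by \Cref{lem:CoincidenceImpliesNormEquivalence}.

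Concretely, since $(A^*)^i Q \cap (B^*)^j P$ is open and non-empty, I pick $\xi_0$ in it and choose $\delta_0 = \delta_0(i,j) > 0$ small enough that $B_{\delta_0}(\xi_0) \subseteq (A^*)^i Q \cap (B^*)^j P$. For $0 < \delta \leq \delta_0$, I then set
\[
 f_\delta := M_{\xi_0} \phi_\delta,
\]
where $\phi_\delta$ is the function fixed in \Cref{sub:NecessityProofNotation}. Because $\widehat{f_\delta}(\xi) = \widehat{\phi_\delta}(\xi - \xi_0)$ and $\supp \widehat{\phi_\delta} \subseteq B_\delta(0)$, the Fourier support of $f_\delta$ lies in $B_\delta(\xi_0) \subseteq (A^*)^i Q \cap (B^*)^j P$, as required.

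Next I apply \Cref{prop:TL-norm-estimate-simple-Fourier-support} twice: once to view $f_\delta$ as an element of $\TLone(A)$ with Fourier support in $(A^*)^i Q$, and once to view it as an element of $\TLtwo(B)$ with Fourier support in $(B^*)^j P$. This yields
\[
 \|f_\delta\|_{\TLone(A)} \asymp |\det A|^{\alpha i} \|f_\delta\|_{L^{p_1}}
 \qquad \text{and} \qquad
 \|f_\delta\|_{\TLtwo(B)} \asymp |\det B|^{\beta j} \|f_\delta\|_{L^{p_2}},
\]
with implicit constants independent of $\delta$, $i$, $j$. Combining these with $\|f_\delta\|_{\TLone(A)} \asymp \|f_\delta\|_{\TLtwo(B)}$ gives $|\det A|^{\alpha i} \|f_\delta\|_{L^{p_1}} \asymp |\det B|^{\beta j} \|f_\delta\|_{L^{p_2}}$.

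Finally I compute the two $L^p$ norms. Since modulation is an isometry on $L^p$, $\|f_\delta\|_{L^p} = \|\phi_\delta\|_{L^p}$, and a one-line change of variables gives $\|\phi_\delta\|_{L^p} = \delta^{d(1 - 1/p)} \|\phi\|_{L^p}$ for every $p \in (0, \infty]$. Substituting this into the previous display yields precisely the claimed equivalence. There is no genuine obstacle here; the content of the corollary is essentially bookkeeping on top of \Cref{prop:TL-norm-estimate-simple-Fourier-support}, with the only delicate point being the choice of $\delta_0$, which must be small enough that the Fourier support of $f_\delta$ fits inside the intersection $(A^*)^i Q \cap (B^*)^j P$ (hence the dependence $\delta_0 = \delta_0(i,j)$).
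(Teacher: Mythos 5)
Your proof is correct and takes essentially the same route as the paper's: both pick a center $\eta$ (you call it $\xi_0$) and a radius $\delta_0$ with $B_{\delta_0}(\eta) \subseteq (A^*)^i Q \cap (B^*)^j P$, set $f_\delta = M_\eta \phi_\delta$, apply \Cref{prop:TL-norm-estimate-simple-Fourier-support} on both the $A$-side and the $B$-side, and chain through the norm equivalence of \Cref{lem:CoincidenceImpliesNormEquivalence}. The only cosmetic difference is that you make the computation $\|\phi_\delta\|_{L^p} = \delta^{d(1-1/p)}\|\phi\|_{L^p}$ explicit, which the paper suppresses into the $\asymp$.
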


\begin{proof} Since $(A^\ast)^i Q \cap (B^\ast)^j P \neq \emptyset$ is
  open, there exists $\eta \in \R^d$ and  $\delta_0 > 0$ such that
  $B_{\delta_0}(\eta) \subseteq (A^\ast)^i Q \cap (B^\ast)^j P$.
  For a fixed $0 < \delta \leq \delta_0$, define
  $f_\delta := M_\eta \phi_\delta$.
  Then
  \[
    \supp \widehat{f_\delta}
    = \supp T_\eta \widehat{\phi_\delta}
    \subseteq B_{\delta}(\eta).
  \]
  Using the estimates of
  \Cref{prop:TL-norm-estimate-simple-Fourier-support} for
  $\|f_\delta\|_{\TLone(A)}$ and $\|f_\delta\|_{\TLtwo(B)}$ yields
  \begin{align*}
    |\det A|^{\alpha i}  \delta^{d  (1-1/p_1)}
    = |\det A|^{\alpha i} \, \|f_\delta\|_{L^{p_1}}
      \asymp \|f_\delta\|_{\TLone(A)}
     \asymp \|f_\delta\|_{\TLtwo(B)}
      \asymp |\det B|^{\beta j}  \delta^{d  (1-1/p_2)},
  \end{align*}
  with implicit constants independent of $i,j, \delta, \delta_0$.
\end{proof}

The following proposition provides a more technical version
of Proposition~\ref{prop:TL-norm-estimate-simple-Fourier-support}
and involves a linear combination of functions with Fourier supports in $(A^*)^{i_k} Q$
for suitable points $i_k \in \mathbb{Z}$.
The proof strategy resembles the one of \Cref{prop:TL-norm-estimate-simple-Fourier-support},
but requires various technical modifications.

\begin{proposition}\label{prop:TL-norm-estimate-sequence}
  Let $A \in \GL(d,\R)$ be expansive, $\alpha \in \R$ and $p, q \in (0, \infty]$.
  For $K \in \N$, let $i_1, \dots, i_K \in \Z$ be
  increasing with $|i_k - i_{k'}| > 2N$ if $k \neq k'$, where
  $N \in \N$ is as in \Cref{eq:NeighborControl}.

  Suppose there exists $\delta_0 > 0$ and points
  $\eta_1, \dots, \eta_K \in \R^d$ such that:
  \begin{enumerate}
   \item[(a)] $B_{\delta_0}(\eta_k) \subseteq (A^\ast)^{i_k}Q$ for all $k=1, \dots, K$,
   \item[(b)] $|\phi(x)| \geq \frac{1}{2}|\phi(0)|$ for all $x \in \delta_0 A^{-i_1}\Omega_A$.
  \end{enumerate}
  Then, for all $0 < \delta \leq \delta_0$ and $c \in \CC^K$, the function
  $f = \sum_{k=1}^K c_k M_{\eta_k} \phi_\delta$ satisfies
   \begin{equation} \label{eq:TL-norm-estimate-sequence}
    \| f \|_{\TL(A)} \asymp \delta^{d (1 -1/p)}
    \Big\|\Big(|\det A|^{\alpha i_k} \, |c_k|\Big)_{k =1}^K \Big\|_{\ell^q},
  \end{equation}
  where the implicit constant is independent of
  $K, c, \delta, \delta_0, \eta_1, \dots, \eta_K, i_1, \dots, i_K$.
\end{proposition}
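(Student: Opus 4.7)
The strategy parallels the proof of \Cref{prop:TL-norm-estimate-simple-Fourier-support}, with the additional combinatorial layer of tracking the Fourier-disjoint ``bands'' $I_k := \{i \in \Z : |i - i_k| \leq N\}$. Setting $f_k := c_k M_{\eta_k}\phi_\delta$, hypothesis (a) gives $\supp\widehat{f_k} \subseteq B_\delta(\eta_k) \subseteq (A^\ast)^{i_k} Q$, so $f_k \ast \varphi_i = 0$ unless $(A^\ast)^i Q \cap B_\delta(\eta_k) \neq \emptyset$, which by \Cref{eq:NeighborControl} forces $i \in I_k$. The separation hypothesis $|i_k - i_{k'}| > 2N$ makes the bands $I_k$ pairwise disjoint, so for every $i$ at most one summand in $f \ast \varphi_i = \sum_k c_k (M_{\eta_k}\phi_\delta) \ast \varphi_i$ is nonzero.

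For the lower bound, I would pass to the auxiliary analyzing vector $\Phi$, whose induced Triebel-Lizorkin quasi-norm is equivalent to that induced by $\varphi$ (see \Cref{sec:TL}). Because $\widehat{\Phi_{i_k}} \equiv 1$ on $(A^\ast)^{i_k} Q \supseteq B_\delta(\eta_k)$ while $\supp \widehat{\Phi_{i_k}} \subseteq \bigcup_{|l - i_k| \leq N}(A^\ast)^l \overline{Q}$ is disjoint from $B_\delta(\eta_{k'}) \subseteq (A^\ast)^{i_{k'}} Q$ for $k' \neq k$ (the same bandwidth argument, using $|i_k - i_{k'}| > 2N$), one obtains the exact identity $f \ast \Phi_{i_k} = f_k$. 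For $p, q < \infty$, keeping only the terms with index $i = i_k$ in the defining $\ell^q$-sum and using $|f_k| = |c_k|\,|\phi_\delta|$ yields
\begin{equation*}
  \|f\|_{\TL(A;\Phi)}
  \geq \bigg\|\Big(\sum_{k=1}^K (|\det A|^{\alpha i_k}|c_k|)^q\Big)^{1/q}|\phi_\delta|\bigg\|_{L^p}
  = \bigl\|(|\det A|^{\alpha i_k}c_k)_{k=1}^K\bigr\|_{\ell^q}\,\|\phi_\delta\|_{L^p},
\end{equation*}
and $\|\phi_\delta\|_{L^p} = \delta^{d(1-1/p)}\|\phi\|_{L^p}$. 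The endpoint cases $p = \infty$ are handled analogously by restricting the outer sup in $\|\cdot\|_{\TLi(A;\Phi)}$ or $\|\cdot\|_{\TLii(A;\Phi)}$ to a single dilate--translate $A^\ell\Omega_A + w$ on which, via hypothesis (b), $|\phi_\delta|$ admits the lower bound $(1/2)\delta^d|\phi(0)|$.

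For the upper bound, the band decomposition rewrites
\begin{equation*}
  \bigg(\sum_i (|\det A|^{\alpha i}|f \ast \varphi_i|)^q\bigg)^{1/q}
  = \bigg(\sum_{k=1}^K U_k(x)^q\bigg)^{1/q},
\end{equation*}
where $U_k(x) := |c_k|\bigl(\sum_{i \in I_k}(|\det A|^{\alpha i})^q|(M_{\eta_k}\phi_\delta)\ast\varphi_i(x)|^q\bigr)^{1/q}$ satisfies $\|U_k\|_{L^p} = \|f_k\|_{\TL(A)} \asymp |c_k|\,|\det A|^{\alpha i_k}\delta^{d(1-1/p)}$ by \Cref{prop:TL-norm-estimate-simple-Fourier-support}. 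When $p \geq q$, Minkowski's inequality applied in $L^{p/q}$ gives $\|f\|_{\TL(A)}^q \leq \sum_k \|U_k\|_{L^p}^q$; when $1 \leq p \leq q$, the generalized Minkowski inequality $\|\cdot\|_{L^p(\ell^q)} \leq \|\cdot\|_{\ell^q(L^p)}$ yields the same conclusion. Either way one obtains the desired $\ell^q$-upper bound.

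The main obstacle is the remaining regime $p < \min(q,1)$, where neither form of Minkowski applies and a naive triangle-type estimate on $\ell^q(L^p)$ produces only an $\ell^p$-bound that is weaker (by a power of $K$) than the claimed $\ell^q$-bound. Here I would instead use a uniform-in-$k$ pointwise estimate: writing $|(M_{\eta_k}\phi_\delta)\ast\varphi_i(x)| = |(\phi_\delta \ast M_{-\eta_k}\varphi_i)(x)| \leq (|\phi_\delta| \ast |\varphi_i|)(x)$ and exploiting that hypothesis (b) restricts $\delta_0$ to a range in which $\phi_\delta$ varies slowly on the spatial scales $\|A^{-i_k}\| \leq \|A^{-i_1}\|$, one deduces $(|\phi_\delta| \ast |\varphi_i|)(x) \lesssim |\phi_\delta(x)|$ uniformly for $i \in I_k$ and $k = 1,\dots,K$. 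Consequently $U_k(x) \lesssim |c_k|\,|\det A|^{\alpha i_k}|\phi_\delta(x)|$, so that $(\sum_k U_k^q)^{1/q} \lesssim |\phi_\delta(x)|\,\|(|\det A|^{\alpha i_k}c_k)_k\|_{\ell^q}$, and taking $L^p$-norms and invoking $\|\phi_\delta\|_{L^p} = \delta^{d(1-1/p)}\|\phi\|_{L^p}$ completes the estimate. The endpoint cases $p = \infty$ are treated analogously to Cases 2--3 of \Cref{prop:TL-norm-estimate-simple-Fourier-support}.
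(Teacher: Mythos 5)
Your lower-bound argument matches the paper's: pass to the auxiliary analyzing vector $\Phi$, use the identity $f \ast \Phi_{i_k} = c_k M_{\eta_k}\phi_\delta$, restrict the defining sum (or supremum) to $i = i_k$, and for $p = \infty$, $q < \infty$ invoke hypothesis (b) after choosing $\ell = -i_1$, $w = 0$. That part is fine.

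The upper bound, however, has two genuine gaps, and unfortunately they hit exactly the regime $p < q$. First, the ``generalized Minkowski inequality'' you invoke for $1 \leq p \leq q$, namely $\|\cdot\|_{L^p(\ell^q)} \leq \|\cdot\|_{\ell^q(L^p)}$, goes the \emph{wrong} way. For $p \leq q$ the correct direction is $\|\cdot\|_{\ell^q(L^p)} \leq \|\cdot\|_{L^p(\ell^q)}$, i.e.\ the mixed norm you want to \emph{bound} is the bigger one. A concrete counterexample to your inequality: $p=1$, $q=2$, $a_1 = \mathds{1}_{[0,1]}$, $a_2 = \mathds{1}_{[1,2]}$ gives $\|(a_k)\|_{L^1(\ell^2)} = 2 > \sqrt{2} = \|(a_k)\|_{\ell^2(L^1)}$. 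So the Minkowski step only works for $p \geq q$, and the remaining regime is all of $p < q$, not just $p < \min(q,1)$. Second, your fallback for the remaining regime, the pointwise claim $(|\phi_\delta| \ast |\varphi_i|)(x) \lesssim |\phi_\delta(x)|$, is false in general: if $\phi$ has a zero at some $x_0 \neq 0$ then $|\phi_\delta|$ vanishes at $x_0/\delta$ while $|\phi_\delta|\ast|\varphi_i|$ does not, and more fundamentally the decay of a convolution of two Schwartz functions is governed by the slower-decaying factor, so $|\phi_\delta|\ast|\varphi_i|$ need not inherit the decay of $|\phi_\delta|$. Hypothesis (b) is purely a lower bound on $|\phi|$ near the origin and has no bearing on the comparison at general $x$.

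That said, you are circling the right idea. The paper avoids any interchange of $L^p$ and $\ell^q$ precisely by establishing a $k$-uniform \emph{pointwise} bound whose $x$-dependence factors out of the inner $\ell^q$ sum — but the correct majorant is not $|\phi_\delta(x)|$ itself. \Cref{lem:ImprovedConvolutionBound} shows (using that $B_\delta(\eta_k) \subseteq (A^\ast)^{i_k}Q$ forces $\|A^{-i}\| \lesssim 1/\delta$ for $|i-i_k|\leq N$) that
\begin{equation*}
  \bigl(|\phi_\delta| \ast |\varphi_i|\bigr)(x) \lesssim \delta^d \, (1 + |\delta x|)^{-M}
\end{equation*}
for any prescribed $M > 0$, with constant independent of $i, i_k, \delta, x$. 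Taking $M = d/p + 1$, the factor $\delta^d(1+|\delta x|)^{-M}$ is independent of $k$, so it pulls out of the $\ell^q$ norm in $k$, and its $L^p_x$ norm is $\asymp \delta^{d(1-1/p)}$. This handles all $p \in (0,\infty)$, $q \in (0,\infty]$ simultaneously and renders the Minkowski case split unnecessary. You should replace the $|\phi_\delta(x)|$-majorant by this polynomial one and derive it along the lines of \Cref{lem:ImprovedConvolutionBound}.
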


\begin{remark}
  Assumption (b) in \Cref{prop:TL-norm-estimate-sequence} is only needed for the case
  $p = \infty$, $q \in (0, \infty)$.
\end{remark}

\begin{proof}
  Using the notation from \Cref{sub:NecessityProofNotation}, we first state some basic
  observations for $f \ast \varphi_i$ and $f \ast \Phi_{i_k}$ with $f$ as in the statement.
  First, note that by assumption (a) it follows that
  $\supp T_{\eta_k}\widehat{\phi_\delta} \subseteq (A^\ast)^{i_k}Q$ for all $k=1, \dots, K$.
  Since $\widehat{\varphi_i} \equiv 0$ outside of $(A^\ast)^{i}Q$, this implies
  \begin{equation*}
    M_{\eta_k}[\phi_\delta] \ast \varphi_i
    = \Fourier^{-1}(T_{\eta_k} [\widehat{\phi_\delta}] \cdot \widehat{\varphi_i})
    = 0 \qquad \text{for } |i - i_k| > N,
  \end{equation*}
  as $(A^\ast)^{i_k}Q \cap (A^\ast)^{i}Q = \emptyset$ for
  $|i - i_k| > N$ by \Cref{eq:NeighborControl}.
  Furthermore, note that for fixed $i \in \Z$, there can be at most one point
  $i_k$ such that $|i - i_k| \leq N $ due to the pairwise minimal distance
  between the chosen points $i_1, ..., i_K$.
  This implies that
  \begin{equation}\label{eq:lem2-f-conv-varphi}
    f \ast \varphi_i
    = \sum_{k=1}^K c_k \cdot (M_{\eta_k} [\phi_\delta] \ast \varphi_i)
    = \begin{cases}
         c_k \cdot (M_{\eta_k} [\phi_\delta] \ast \varphi_i), & \text{if } |i - i_k| \leq N,\\
         0, & \text{otherwise.}
      \end{cases}
  \end{equation}

  Second, for $f \ast \Phi_{i_k}$, observe that $\widehat{\Phi_{i_k}} \equiv 0$ outside of
  $\bigcup_{i= i_k -N}^{i_k+N}(A^\ast)^{i}Q$ for all $k = 1, \dots, K$ by construction of $\Phi$.
  Since $|i_k - i_{k'}| > 2N$ for $k \neq k'$, it follows by \Cref{eq:NeighborControl} that
  \[
    (A^\ast)^{i_{k'}}Q \cap \bigcup_{i= i_k -N}^{i_k+N}(A^\ast)^{i}Q
    = \emptyset,
    \quad \text{for} \quad k \neq k'.
  \]
  This implies
  \(
    M_{\eta_{k'}} [\phi_\delta] \ast \Phi_{i_k}
    = \Fourier^{-1}(T_{\eta_{k'}} [\widehat{\phi_\delta}] \cdot \widehat{\Phi_{i_k}})
    = 0
  \)
  for $k \neq k'$.
  Since also $\widehat{\Phi_{i_k}} \equiv 1$ on
  $(A^\ast)^{i_k}Q \supseteq \supp T_{\eta_k}\widehat{\phi_\delta}$, necessarily
  \begin{equation}
    \label{eq:lem2-f-conv-Phi}
    f \ast \Phi_{i_k}
    = \sum_{k'=1}^K c_{k'} \cdot (M_{\eta_{k'}} [\phi_\delta] \ast \Phi_{i_k})
    = c_kM_{\eta_k} \phi_\delta \qquad \text{for } k= 1, \dots K.
  \end{equation}

  The remainder of the proof is divided into three cases and deals
  with $p < \infty$, $p = \infty$ and $q < \infty$, and
  $p = q = \infty$ separately.
  \\~\\
  \textbf{Case 1:} $p \in (0, \infty)$.
  For the upper bound in \Cref{eq:TL-norm-estimate-sequence}, set $M = \frac{d}{p} + 1$.
  Then, in view of \Cref{eq:lem2-f-conv-varphi}, an application of
  \Cref{lem:ImprovedConvolutionBound} with $\ell = i_k$ shows that
  \begin{align*}
    |f \ast \varphi_i (x)|
    & = |c_k| \cdot |(M_{\eta_k} [\phi_\delta] \ast \varphi_i)(x)|
      \leq |c_k| \cdot (|\phi_\delta| \ast |\varphi_i|) (x) \\
    & \lesssim_{N,A,d,p,Q,\phi,\varphi} |c_k|  \delta^d  (1 + |\delta x|)^{-M}
  \end{align*}
  whenever $| i - i_k| \leq N$.
  On the other hand, $f \ast \varphi_i = 0$  if $|i - i_k| > N$ for all $k = 1,\dots,K$.
  Therefore, for all $x \in \R^d$,
  \begin{align*}
    \Big\|
      \Big(
        |\det A|^{\alpha i}
        |f \ast \varphi_i(x)|
      \Big)_{i \in \Z}
    \Big\|_{\ell^q}
    & \leq \bigg(
             \sum_{k = 1}^{K}
               \sum_{i= i_k -N}^{i_k +N}
                 (|\det A|^{\alpha i} \, |f \ast \varphi_i(x)|)^q
           \bigg)^{1/q} \\
    & \lesssim_{N,A,Q,d,p,q,\phi,\varphi,\alpha}
           \bigg(
             \sum_{k = 1}^{K}
             \big(
               |\det A|^{\alpha i_k} \,
               |c_k| \,
               \delta^d \,
               (1 + |\delta x|)^{-M}
             \big)^q
           \bigg)^{1/q} \\
    & = \delta^d \, (1 + |\delta x|)^{-M} \,
      \Big\|\Big(|\det A|^{\alpha i_k}  |c_k|\Big)_{k =1}^K \Big\|_{\ell^q},
  \end{align*}
  with the usual modification of the argument for $q = \infty$.
  Consequently, this yields
  \begin{align*}
    \| f \|_{\TL(A;\varphi)}
    & = \bigg\|
          \Big\|
            \Big(
              |\det A|^{\alpha i}
              |f \ast \varphi_i|
            \Big)_{i \in \Z}
          \Big\|_{\ell^q}
        \bigg\|_{L^p} \\
    & \lesssim_{N,A,Q,d,p,q,\phi,\varphi,\alpha}
        \bigg(
          \int_{\R^d} (\delta^d \, (1 + |\delta x|)^{-M})^p \, dx
        \bigg)^{1/p}
        \Big\|
          \Big(
            |\det A|^{\alpha i_k}  |c_k|
          \Big)_{k =1}^K
        \Big\|_{\ell^q} \\
    & \lesssim_{d,p} \delta^{d  (1- 1/p)} \,
      \Big\|\Big(|\det A|^{\alpha i_k}  |c_k|\Big)_{k =1}^K \Big\|_{\ell^q},
  \end{align*}
  where the last step used that $M > \frac{d}{p}$,
  so that $\int_{\R^d} (1 + |x|)^{-Mp} \, dx < \infty$.

  For the lower bound, we use the equivalence
  $\| \cdot \|_{\TL(A; \varphi)} \asymp \| \cdot \|_{\TL(A; \Phi)}$ and \Cref{eq:lem2-f-conv-Phi}
  to obtain
  \begin{align*}
    \| f \|_{\TL(A;\varphi)}
    &\asymp_{A,p,q,\alpha,\varphi,N}  \bigg\|
          \Big\|
            \Big(
              |\det A|^{\alpha i} \,
              |f \ast \Phi_i|
            \Big)_{i \in \Z}
          \Big\|_{\ell^q}
        \bigg\|_{L^p} \\
    & \geq \bigg\|
             \Big\|
               \Big(
                 |\det A|^{\alpha i_k} \,
                 |f \ast \Phi_{i_k}|
               \Big)_{k =1}^K
             \Big\|_{\ell^q}
           \bigg\|_{L^p} \\
    & = \| \phi_\delta \|_{L^p}
       \Big\|
         \Big(
           |\det A|^{\alpha i_k} \, |c_k|
         \Big)_{k =1}^K
       \Big\|_{\ell^q} \\
    & \gtrsim_{\phi,p}
        \delta^{d  (1- 1/p)}
        \Big\|
          \Big(
            |\det A|^{\alpha i_k} \, |c_k|
          \Big)_{k =1}^K
        \Big\|_{\ell^q},
  \end{align*}
  as required.
  \\~\\
  \textbf{Case 2:} $p = \infty$, $q \in (0, \infty)$.
  The upper estimate in \Cref{eq:TL-norm-estimate-sequence}
  follows by an application of \Cref{eq:lem2-f-conv-varphi}:
  \begin{align*}
    \|f\|_{\TLi(A;\varphi)}
    &\leq \sup_{\ell \in \Z, w \in \R^d}
            \bigg(
              \frac{1}{|\det A|^{\ell}}
              \int_{A^{\ell} \Omega_A +w}
                \sum_{k= 1}^{K}
                  \sum_{i=i_k -N}^{i_k +N}
                  \big(
                    |\det A|^{\alpha i} \,
                    |c_k| \,
                    (|\phi_\delta| \ast |\varphi_{i}|) (x)
                  \big)^q
             \; dx
            \bigg)^{1/q} \\
    & \leq \bigg(
             \sum_{k= 1}^{K}
               \sum_{i=i_k -N}^{i_k +N}
               \big(
                 |\det A|^{\alpha i} \,
                 |c_k| \,
                 \| |\phi_\delta| \ast |\varphi_{i}| \|_{L^\infty}
               \big)^q
           \bigg)^{1/q}  \\
    & \leq \bigg(
             \sum_{k= 1}^{K}
               \sum_{i=i_k -N}^{i_k +N}
               \big(
                 |\det A|^{\alpha i} \,
                 |c_k| \,
                 \| \phi_\delta\|_{L^\infty} \,
                 \| \varphi_{i} \|_{L^1}
               \big)^q
           \bigg)^{1/q}  \\
    & \lesssim_{A, N, q, \alpha} \delta^d \, \|\phi\|_{L^\infty}  \|\varphi\|_{L^1}
      \bigg(\sum_{k= 1}^{K}(|\det A|^{\alpha i_k}  |c_k|  )^q \bigg)^{1/q}.
  \end{align*}
  For the reverse inequality, we again use the $A$-analyzing vector $\Phi$.
  We start by taking $w = 0$ and $\ell = - i_1$ in the supremum below.
  Note that this choice ensures that the sum over
  $i \geq - \ell = i_1$ includes all $i_k$ for $k=1, \dots, K$ as they are increasing.
  By \Cref{eq:lem2-f-conv-Phi}, it follows that
  \begin{align*}
    \|f\|_{\TLi(A;\varphi)}
    & \asymp_{A,q,\alpha,\varphi,N}
      \sup_{\ell \in \Z, w \in \R^d}
        \bigg(
          \frac{1}{|\det A|^{\ell}}
          \int_{A^{\ell} \Omega_A + w}
            \sum_{i = - \ell}^{\infty}
              (|\det A|^{\alpha i} \, |(f \ast \Phi_i) (x)|)^q
          \, dx
        \bigg)^{1/q} \\
    &\geq \bigg(
            \frac{1}{|\det A|^{-i_1}}
            \int_{A^{-i_1} \Omega_A}
              \sum_{k= 1}^{K}
              (|\det A|^{\alpha i_k} \, |(f \ast \Phi_{i_k}) (x)|)^q
            \, dx
          \bigg)^{1/q} \\
    &= \bigg(
         \frac{1}{|\det A|^{-i_1}}
         \int_{A^{-i_1} \Omega_A}
           \sum_{k= 1}^{K}
           (|\det A|^{\alpha i_k} \, |c_k| \, |\phi_\delta(x)|)^q
         \, dx
       \bigg)^{1/q}\\
    &\geq  \min_{x \in \delta A^{-i_1} \Omega_A}
             \delta^d \, |\phi(x)|  \,
             \Big\|
               \Big(
                 |\det A|^{\alpha i_k} \, |c_k|
               \Big)_{k =1}^K
             \Big\|_{\ell^q} \\
    &\geq \delta^d \, \frac{1}{2} \, |\phi(0)|
          \Big\|
            \Big(
              |\det A|^{\alpha i_k} \, |c_k|
            \Big)_{k =1}^K
          \Big\|_{\ell^q},
  \end{align*}
  where we used the assumption $|\phi(x)| \geq \frac{1}{2} |\phi(0)|$
  for $x \in \delta_0 A^{-i_1} \Omega_A$ in the last step.
  Furthermore, note that $\phi(0) > 0$ since $\widehat{\phi} \geq 0$
  and $\widehat{\phi} \not\equiv 0$, so that
  $\phi(0) = \int_{\R^d} \widehat{\phi}(\xi) \, d \xi > 0$.
  \\~\\
  \textbf{Case 3:} $p = q = \infty$.
  \Cref{eq:Bii,eq:lem2-f-conv-varphi} allow to obtain the upper bound:
  \begin{align*}
    \|f\|_{\TLii(A;\varphi)}
    & \asymp \sup_{i \in \Z}
               |\det A|^{\alpha i}
               \| f \ast \varphi_i \|_{L^\infty} \\
      &= \sup_{k=1, \dots, K} \,\,
           \sup_{|i - i_k| \leq N}
             |\det A|^{\alpha i}
             \| f \ast \varphi_i \|_{L^\infty}\\
    &\leq \sup_{k=1, \dots, K} \,\,
            \sup_{|i - i_k| \leq N}
              |\det A|^{\alpha i} \,
              |c_k| \,
              \|\phi_\delta\|_{L^\infty} \,
              \|\varphi_{i}\|_{L^1} \\
    & \lesssim_{N, A, \alpha, \phi, \varphi}
      \delta^d
      \Big\|
        \Big(
          |\det A|^{\alpha i_k} \, |c_k|
        \Big)_{k = 1}^K
      \Big\|_{\ell^{\infty}}.
  \end{align*}
  For the lower bound, combining the norm equivalence
  $\| \cdot \|_{\TLii(A; \varphi)} \asymp \| \cdot \|_{\TLii(A; \Phi)}$
  and \Cref{eq:Bii,eq:lem2-f-conv-Phi} yields
  \begin{align*}
    \|f\|_{\TLii(A;\varphi)}
    & \asymp_{A,\varphi,N,\alpha} \|f\|_{\TLii(A;\Phi)}
      \asymp \sup_{i \in \Z}
               |\det A|^{\alpha i} \,
               \| f \ast \Phi_i \|_{L^\infty} \\
    & \geq \sup_{k=1, \dots, K}
             |\det A|^{\alpha i_k} \,
             \| f \ast \Phi_{i_k} \|_{L^\infty}
    = \sup_{k=1, \dots, K}
       |\det A|^{\alpha i_k} \,
       |c_k| \,
       \| \phi_\delta \|_{L^\infty} \\
    & \gtrsim_{\phi} \delta^d
      \Big\|\Big(|\det A|^{\alpha i_k}  |c_k|\Big)_{k =1}^L \Big\|_{\ell^{\infty}},
  \end{align*}
  which completes the proof.
\end{proof}

\subsection{The case \texorpdfstring{$\alpha \neq 0$}{α ≠ 0}}

This subsection is devoted to the proof of the following theorem.
In particular, it shows that two expansive matrices $A, B \in \mathrm{GL}(d, \mathbb{R})$ are equivalent
whenever $\dot{\mathbf{F}}_{p,q}^{\alpha} (A) = \dot{\mathbf{F}}_{p,q}^{\alpha} (B)$ and $\alpha \neq 0$.
This proves \Cref{thm:NecessaryCondition} for the case $\alpha \neq 0$.

\begin{theorem}
  Let $A, B \in \GL(d, \R)$ be expansive, $\alpha, \beta \in \R$ and
  $p_1,p_2, q_1, q_2 \in (0,\infty]$.
  If $\TLone (A) = \TLtwo(B)$, then the following hold:
  \begin{enumerate}[(i)]
    \item $p_1 = p_2$,

    \item $q_1 = q_2$,

    \item $\alpha = \beta$.
          Furthermore, if $\alpha = \beta \neq 0$, then $A$ and $B$ are equivalent.
  \end{enumerate}
\end{theorem}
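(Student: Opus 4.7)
The plan is to extract three consequences from the coincidence $\TLone(A) = \TLtwo(B)$ in the following order: first $p_1 = p_2$, then $q_1 = q_2$, and finally $\alpha = \beta$ together with the equivalence of $A$ and $B$ when $\alpha \neq 0$. All three arguments test the norm equivalence provided by \Cref{lem:CoincidenceImpliesNormEquivalence} against the auxiliary functions of \Cref{sec:norm_estimates}, whose Fourier supports are carefully placed in the covers $\bigl((A^*)^i Q\bigr)_{i \in \Z}$ and $\bigl((B^*)^j P\bigr)_{j \in \Z}$. I will repeatedly use the following consequence of expansiveness: for every $i \in \Z$ there exists $j \in \Z$ with $(A^*)^i Q \cap (B^*)^j P \neq \emptyset$, and along such intersecting pairs $|j| \to \infty$ together with $|i|$.

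To obtain $p_1 = p_2$, fix any pair $(i,j)$ with non-empty intersection and apply \Cref{cor:detA-equiv-detB}: the relation
\[
  |\det A|^{\alpha i} \, \delta^{d(1-1/p_1)}
  \asymp |\det B|^{\beta j} \, \delta^{d(1-1/p_2)}
\]
must hold uniformly in $\delta \in (0,\delta_0]$, so letting $\delta \to 0^+$ with $i,j$ fixed forces $p_1 = p_2 =: p$. With this identification the same corollary also delivers the \emph{weight equivalence}
\[
  |\det A|^{\alpha i} \asymp |\det B|^{\beta j}
  \qquad \text{whenever } (A^*)^i Q \cap (B^*)^j P \neq \emptyset.
\]

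For $q_1 = q_2$ I would invoke the multi-bump estimate \Cref{prop:TL-norm-estimate-sequence}. Choose a strictly increasing sequence $(i_k)_{k=1}^K$ with $|i_k - i_{k'}| > 2N$, together with a matching $(j_k)_{k=1}^K$ and points $\eta_k \in (A^*)^{i_k} Q \cap (B^*)^{j_k} P$; picking $i_1$ large enough and $\delta_0$ small enough also realises condition (b) of the proposition, since $A^{-i_1}\Omega_A$ shrinks to $\{0\}$ and $\phi(0) > 0$. Applying the proposition in parallel for $A$ and for $B$ to $f = \sum_{k=1}^K c_k M_{\eta_k} \phi_\delta$ yields
\[
  \Bigl\| \bigl( |\det A|^{\alpha i_k}\, c_k \bigr)_{k=1}^K \Bigr\|_{\ell^{q_1}}
  \asymp
  \Bigl\| \bigl( |\det B|^{\beta j_k}\, c_k \bigr)_{k=1}^K \Bigr\|_{\ell^{q_2}}
\]
with implicit constants independent of $K$ and $c$. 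The weight equivalence makes both weighted sequences comparable, so this collapses to $\|(d_k)\|_{\ell^{q_1}} \asymp \|(d_k)\|_{\ell^{q_2}}$ uniformly over finitely supported sequences, which is well-known to force $q_1 = q_2$.

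For (iii) I would split on whether $\alpha$ is zero. If $\alpha \neq 0$, taking logarithms in the weight equivalence yields $\bigl|\alpha i \ln|\det A| - \beta j \ln|\det B|\bigr| \leq \mathrm{const}$, which first forces $\beta \neq 0$ (otherwise $i$ would be bounded, contradicting that intersections occur for arbitrarily large $|i|$) and then, via \Cref{lem:IjJiInclusions-with-alpha-beta}, gives $\sup_i |J_i| + \sup_j |I_j| < \infty$. \Cref{lem:hom_covers} applied to the covers induced by $A^*$ and $B^*$, combined with \Cref{cor:adjoint_equivalent}, then yields the equivalence of $A$ and $B$. With this in hand, \Cref{lem:IjJiInclusions} shows $j = \lfloor c i \rfloor + O(1)$ on intersecting pairs with $c = \ln|\det A|/\ln|\det B|$; substituting into the log-form of the weight equivalence and using $c \ln|\det B| = \ln|\det A|$ reduces it to $(\alpha - \beta)\, i\, \ln|\det A| = O(1)$, so sending $i \to \infty$ forces $\alpha = \beta$. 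If instead $\alpha = 0$, the weight equivalence reduces to $1 \asymp |\det B|^{\beta j}$ on intersecting pairs, and since $|j| \to \infty$ is achievable this forces $\beta = 0 = \alpha$. I expect the main technical obstacle to be the combinatorial bookkeeping in the multi-bump step — ensuring that the points $\eta_k$ and the exponents $(i_k, j_k)$ can be chosen consistently for both covers simultaneously while respecting the spacing $|i_k - i_{k'}| > 2N$ and assumption (b) — which should be handled by enlarging $N$ to accommodate the analogous constant of \Cref{lem:NeighborSetControl} for the $B$-cover.
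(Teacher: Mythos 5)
Your proposal follows the paper's proof closely in all three parts: the single-bump test from \Cref{cor:detA-equiv-detB} with $\delta \to 0^+$ for $p_1 = p_2$, the multi-bump construction from \Cref{prop:TL-norm-estimate-sequence} combined with the weight equivalence for $q_1 = q_2$, and \Cref{lem:IjJiInclusions-with-alpha-beta} plus \Cref{lem:hom_covers} and \Cref{cor:adjoint_equivalent} to derive equivalence of $A$ and $B$ when $\alpha\neq 0$. The only genuine divergence is in the sub-step showing $\alpha = \beta$ once $\alpha\neq 0 \neq \beta$ and equivalence are in hand: you substitute $j = \lfloor c\,i\rfloor + O(1)$ into the logarithmic form of the weight equivalence and cancel using $c\ln|\det B| = \ln|\det A|$ to get $(\alpha-\beta)\,i\,\ln|\det A| = O(1)$; the paper instead runs a measure comparison $|\det B|^j \lesssim \sum_{i\in I_j}\Lebesgue{(A^*)^i Q} \lesssim |\det B|^{(\beta/\alpha)j}$ to force $\beta/\alpha = 1$. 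Both arguments use exactly the same two inputs (the weight equivalence and the bounded deviation of $j$ from $\lfloor c\,i\rfloor$), so this is a cosmetic variation rather than a different route; your version is perhaps slightly more direct, while the paper's fits the recurring measure-theoretic style of \Cref{sec:covers}. Your case split at $\alpha = 0$ versus $\alpha\neq 0$ is also a harmless reorganization of the paper's Case 1/Case 2 split. Two small things worth tightening: condition (b) of \Cref{prop:TL-norm-estimate-sequence} must hold on $\delta_0(A^{-i_1}\Omega_A \cup B^{-j_1}\Omega_B)$ — both ellipsoids, not only the $A$-one — and the cleaner way to guarantee it is to shrink $\delta_0$ rather than push $i_1$ large; and in the multi-bump step the spacing $|j_k - j_{k'}| > 2N$ must be enforced alongside $|i_k - i_{k'}| > 2N$, which your "enlarge $N$" remark gestures at but should be made explicit.
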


\begin{proof} We prove the three assertions separately.
\\~\\
 (i) Since $\varphi, \psi \in \Schwartz(\R^d)$
  are analyzing vectors for $A$ resp.\ $B$, it follows that
  \begin{align} \label{eq:covers_analyzing}
   \bigcup_{i \in \Z} (A^\ast)^{i} Q = \bigcup_{j \in \Z}
  (B^\ast)^{j} P = \mathbb{R}^d \setminus \{0\} .
  \end{align}
  Hence, there exist $i_0, j_0 \in \Z$ such that
  $(A^\ast)^{i_0} Q \cap (B^\ast)^{j_0} P \neq \emptyset$.
  By \Cref{cor:detA-equiv-detB}, this implies the existence of some $\delta_0 > 0$ such that
  for, all $0 < \delta \leq \delta_0$,
    \begin{equation*}
     |\det A|^{\alpha i_0}  \delta^{d  (1-1/p_1)}
    \asymp |\det B|^{\beta j_0}  \delta^{d  (1-1/p_2)},
  \end{equation*}
  with implicit constant independent of $\delta$.
  In turn, this implies
  \begin{equation*}
    \delta^{1/p_1 - 1/p_2} \asymp 1, \qquad \text{for all} \quad 0 < \delta \leq \delta_0,
  \end{equation*}
  which is only possible for $p_1 = p_2$.
  \\~\\
  (ii) Under the assumption $p_1 = p_2 = p$, we show that
  \begin{equation}
    \| c \|_{\ell^{q_1}} \asymp \| c \|_{\ell^{q_2}},
    \qquad \text{for all} \quad \, K \in \N, \; c \in \CC^K
    ,
    \label{eq:QIdentificationSequenceNorm}
  \end{equation}
  where the implied constant is independent of $K$ and $c$.
  This easily implies $q_1 = q_2$.

  Let $K \in \N$ be arbitrary and let $N \in \N$ be as chosen in \Cref{eq:NeighborControl}.
  Recall the identity \eqref{eq:covers_analyzing}
  and note that each image
  set $(A^\ast)^{i} Q$, $(B^\ast)^{j} P$ for $i,j \in \Z$ is
  relatively compact and hence bounded.
  Therefore, it is not hard to see that there exist points $\eta_1,\dots,\eta_K \in \R^d$
  and increasing sequences $(i_k)_{k = 1}^K$ and $(j_k)_{k = 1}^K$ in $\Z$
  satisfying
  \begin{equation*}
    |i_k - i_{k'}| > 2N \quad \text{and} \quad |j_k - j_{k'}| > 2N
    \qquad \text{for } k \neq k',
  \end{equation*}
   with
  \begin{equation}
    \label{eq:QIdentificationXiInIntersection}
    \eta_k \in (A^\ast)^{i_k} Q \cap (B^\ast)^{j_k} P
    \quad \text{for all} \quad
    k = 1,\dots,K.
  \end{equation}
 Since the sets $Q,P$ are open, there exists $\delta_1 > 0$ such that
  \begin{equation*}
    B_{\delta_1}(\eta_k)
    \subseteq (A^\ast)^{i_k} Q \cap (B^\ast)^{j_k} P
    \quad \text{for all} \quad
    k = 1, \dots, K.
  \end{equation*}
  Additionally, by continuity of $\phi \in \Schwartz(\R^d)$,
  there exists $\delta_2 > 0$ such that
  \begin{equation*}
    |\phi(x)| \geq \frac{1}{2} |\phi(0)|,
    \qquad  x \in \delta_2 ( A^{-i_1}\Omega_A \cup B^{-j_1}\Omega_B).
  \end{equation*}
  In combination, this shows that the assumptions of \Cref{prop:TL-norm-estimate-sequence} are
  met for $\TLgeneral{p}{q_1}{\alpha}(A;\varphi)$ with
  $\delta_0 := \min \{\delta_1, \delta_2\}$, $\eta_1,\dots,\eta_K$
  and $(i_k)_{k = 1}^K$, as well as for
  $\TLgeneral{p}{q_2}{\beta}(B;\psi)$ with $(j_k)_{k = 1}^K$ replacing
  the sequence $(i_k)_{k = 1}^K$.

  For showing the claim \eqref{eq:QIdentificationSequenceNorm},
  let $c \in \CC^K$ and $0 < \delta \leq \delta_0$ be fixed.
  Then defining
  $f_\delta := \sum_{k=1}^K |\det A|^{- \alpha i_k} c_k M_{\eta_k}
  \phi_\delta $ gives
  \begin{align*}
    \delta^{d  (1 -1/p)}
    \| c \|_{\ell^{q_1}}
    &\asymp \|f_\delta\|_{\TLgeneral{p}{q_1}{\alpha}(A)} \\
     & \asymp \|f_\delta\|_{\TLgeneral{p}{q_2}{\beta}(B)} \\
    &\asymp \delta^{d  (1 -1/p)}
      \Big\|\Big(|\det B|^{\beta j_k}\, |\det A|^{- \alpha i_k}\, |c_k|
      \Big)_{k =1}^K\Big\|_{\ell^{q_2}}.
      \numberthis   \label{eq:q1=q2}
  \end{align*}
  Since $(A^\ast)^{i_k} Q \cap (B^\ast)^{j_k} P \neq \emptyset$ by
  \Cref{eq:QIdentificationXiInIntersection}, it follows by
  \Cref{cor:detA-equiv-detB} for $p_1 = p_2 = p$ that
  $|\det A|^{\alpha i_k}  \asymp |\det B|^{\beta j_k}$ for $ k=1, \dots, K$,
  with implicit constant independent of $i_k, j_k$.
  This, together
  with \Cref{eq:q1=q2}, easily shows the claim \eqref{eq:QIdentificationSequenceNorm}.
  \\~\\
  (iii) Assuming $p_1 = p_2 = p$, it follows by
  \Cref{cor:detA-equiv-detB} that there exists $C \geq 1$
  such that
  \begin{equation}
    \label{eq:WeightEquivalenceExplicit}
    \frac{1}{C}  |\det B|^{\beta j}
    \leq |\det A|^{\alpha i}
    \leq C  |\det B|^{\beta j}
    \quad \text{whenever} \quad
    (A^\ast)^i Q \cap (B^\ast)^j P \neq \emptyset.
  \end{equation}
  We consider the cases $\alpha =0$ or $\beta = 0$, and $\alpha \neq 0 \neq \beta$.

  \textit{Case~1: $\alpha = 0$ or $\beta = 0$.}
  Suppose first that $\alpha = 0$.
  As a consequence of \Cref{eq:covers_analyzing},
  for all $j \in \Z$, there needs to exist $i \in \Z$ such that
  $(A^\ast)^i Q \cap (B^\ast)^j P \neq \emptyset$.
  \Cref{eq:WeightEquivalenceExplicit} implies therefore that
  $|\det B|^{\beta j} \leq C$ as $\alpha = 0$.
  Since this holds for all $j \in \Z$, and $|\det B| \neq 0$,
  it follows that necessarily also $\beta = 0$.
  If $\beta = 0$, then also $\alpha = 0$ by symmetry.

  \emph{Case~2: $\alpha \neq 0 \neq \beta$.}
  Suppose that $\alpha \neq 0 \neq \beta$.
  Then, by \Cref{eq:WeightEquivalenceExplicit},
  the assumptions of \Cref{lem:IjJiInclusions-with-alpha-beta}
  are satisfied for $\big( (A^\ast)^i Q \big)_{i \in \Z}$ and
  $\big( (B^\ast)^j P \big)_{j \in \Z}$.
  Hence, there exists $M \in \N$ such that with $J_i,I_j$
  as defined in \Cref{eq:SpecializedIntersectionSets}, we have
  \[
    J_i \subseteq \Big\{ j \in \Z \colon \Big|j - \Big\lfloor
    \frac{\alpha}{\beta} \, c \, i \Big\rfloor\Big| \leq M \Big\},
    \quad \text{and} \quad
    I_j \subseteq \Big\{ i \in \Z \colon \Big|i - \Big\lfloor
    \frac{\beta}{\alpha} \, \frac{1}{c}
    \, j \Big\rfloor\Big| \leq M \Big\},
  \]
  where $c = c(A, B) := \ln |\det A| / \ln |\det B|$.
  In particular, this implies that
  \[
    \sup_{j \in \Z} |I_j| + \sup_{i \in \Z} |J_i| < \infty.
  \]
  Therefore, an application of \Cref{lem:hom_covers} implies that $A^\ast$ and $B^\ast$ are
  equivalent, and hence so are $A$ and $B$ by
  \Cref{cor:adjoint_equivalent}.

  It remains to show that $\alpha = \beta$.
  To see this, note that, for all $j \in \Z$, it holds that
  \begin{align*}
    |\det B|^j
    & \lesssim_{P} \Lebesgue{(B^\ast)^j P}
      \leq \Measure \Big(\bigcup_{i \in I_j} (A^\ast)^i Q \Big)
      \leq \sum_{i \in I_j} \Lebesgue{(A^\ast)^i Q} \\
    & \lesssim_{Q} \sum_{k = -M}^M |\det A|^{\lfloor \frac{\beta}{\alpha} \, \frac{1}{c}
      \, j \rfloor +k}
      \lesssim_{M,A}  |\det A|^{ \frac{\beta}{\alpha} \frac{\ln(|\det B|)}{\ln(|\det A|)} j}
      = |\det B|^{ \frac{\beta}{\alpha}j} .
  \end{align*}
  Since $|\det B| \neq 0$, this is only possible for
  $\frac{\beta}{\alpha} = 1$, and hence $\alpha = \beta$ as claimed.
\end{proof}

\subsection{The case \texorpdfstring{$\alpha = 0$}{α = 0} and \texorpdfstring{$p < \infty$}{p < ∞}}%
\label{sub:NecessityProofAlphaZero}

In this section, we prove the following theorem, showing that if two Triebel-Lizorkin
spaces coincide and the matrices are \emph{not} equivalent, then necessarily $q = 2$.
The only shortcoming of this theorem is that it only applies when $p < \infty$.
We will deal with the case $p = \infty$ in the following subsection.

\begin{theorem}\label{lem:KhintchineArgument}
  Let $A,B \in \GL(d,\R)$ be expansive, $p \in (0,\infty)$ and $q \in (0,\infty]$.
  Suppose that
  \begin{equation}\label{eq:TLone_coincidence}
   \| f \|_{\TLzero(A)} \asymp \| f \|_{\TLzero(B)},
   \quad \text{for all} \quad f \in \mathcal{F}^{-1} (C_c^{\infty} (\mathbb{R}^d \setminus \{0\})).
  \end{equation}
  If $A$ and $B$ are not equivalent, then $q = 2$.

  In particular, if $\TLzero(A) = \TLzero(B)$ and $A$ and $B$ are not equivalent, then $q=2$.
\end{theorem}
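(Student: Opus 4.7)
The plan is to exhibit, under the assumption that $A, B$ are not equivalent, a family of Schwartz test functions $f_{c,\delta}$ for which the $\TLzero(A)$-norm identifies with an $L^p$-norm while the $\TLzero(B)$-norm identifies with an $\ell^q$-norm of the coefficient vector $c$; applying the complex Khintchine inequality with Rademacher coefficients then forces $q = 2$. Concretely, since non-equivalence of $A$ and $B$ translates via \Cref{lem:hom_covers} into $\sup_i |J_i| + \sup_j |I_j| = \infty$ (and the hypothesis is symmetric in $A, B$), I would assume $\sup_i |J_i| = \infty$ and, for each $K \in \N$, first choose $i_K \in \Z$ with $|J_{i_K}|$ large enough to extract $K$ indices $j_{K,1} < \dots < j_{K,K} \in J_{i_K}$ pairwise separated by more than $2N$ (where $N$ is as in \eqref{eq:NeighborControl}). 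I would then pick $\eta_{K,k} \in (A^*)^{i_K}Q \cap (B^*)^{j_{K,k}}P$ and $\delta_0 = \delta_0(K) > 0$ so that $B_{\delta_0}(\eta_{K,k}) \subseteq (A^*)^{i_K}Q \cap (B^*)^{j_{K,k}}P$ for every $k$.

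With this data in place, I would set $f_{c,\delta} := \sum_{k=1}^K c_k M_{\eta_{K,k}} \phi_\delta$ for $c \in \CC^K$ and $\delta \in (0, \delta_0]$. Since all $\eta_{K,k}$ sit in the \emph{same} image set $(A^*)^{i_K}Q$, the Fourier support of $f_{c,\delta}$ lies in $(A^*)^{i_K}Q$, so \Cref{prop:TL-norm-estimate-simple-Fourier-support} (with $\alpha = 0$) delivers $\| f_{c,\delta} \|_{\TLzero(A)} \asymp \| f_{c,\delta} \|_{L^p}$. On the $B$-side, the spread condition on the $j_{K,k}$ together with $B_{\delta_0}(\eta_{K,k}) \subseteq (B^*)^{j_{K,k}}P$ are precisely the hypotheses of \Cref{prop:TL-norm-estimate-sequence} with $\beta = 0$ (the auxiliary assumption (b) there is unneeded since $p < \infty$), so that $\| f_{c,\delta} \|_{\TLzero(B)} \asymp \delta^{d(1-1/p)} \| c \|_{\ell^q}$. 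Since $\widehat{f_{c,\delta}} \in C_c^\infty(\R^d \setminus \{0\})$, the hypothesized norm equivalence then yields
\[
  \| f_{c,\delta} \|_{L^p} \asymp \delta^{d(1-1/p)} \| c \|_{\ell^q},
\]
with constants independent of $K$, $c$, and $\delta$.

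Finally, I would specialize to Rademacher coefficients $c = \epsilon$, so that $f_{\epsilon,\delta}(x) = \phi_\delta(x) \sum_k \epsilon_k e^{2\pi i \eta_{K,k} \cdot x}$. The complex Khintchine inequality gives $\EE \bigl| \sum_k \epsilon_k e^{2\pi i \eta_{K,k} \cdot x} \bigr|^p \asymp K^{p/2}$ pointwise in $x$; integrating and using $\| \phi_\delta \|_{L^p}^p \asymp \delta^{d(p-1)}$ yields $\EE\, \| f_{\epsilon,\delta} \|_{L^p}^p \asymp K^{p/2} \delta^{d(p-1)}$. On the other hand, the previous paragraph combined with $\| \epsilon \|_{\ell^q} = K^{1/q}$ (interpreting $K^0 = 1$ if $q = \infty$) forces $\| f_{\epsilon,\delta} \|_{L^p}^p \asymp K^{p/q} \delta^{d(p-1)}$ for every $\epsilon$, and hence also in expectation. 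Comparing the two asymptotics uniformly in $K$ then forces $K^{p/q} \asymp K^{p/2}$, which is only possible when $q = 2$. The ``In particular'' clause follows directly from \Cref{lem:CoincidenceImpliesNormEquivalence}. The main obstacle is the combinatorial/geometric bookkeeping of the first paragraph: extracting a $2N$-separated family inside $J_{i_K}$ and fixing a single $\delta_0$ so that both propositions fire on the same test function. Once this data is produced, the Khintchine step itself is essentially routine.
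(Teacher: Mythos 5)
Your proposal is correct and takes essentially the same route as the paper. The paper assumes WLOG that $\sup_j |I_j| = \infty$ and builds functions whose Fourier supports lie in a single $(B^*)^{j_0} P$ but spread across many $(A^*)^{i_k} Q$; you instead assume $\sup_i |J_i| = \infty$ and swap the roles of $A$ and $B$, which is immaterial because the hypothesis is symmetric. The combinatorial bookkeeping you flag as the main obstacle is exactly \Cref{lem:A-B-nequiv-construction} in the paper: pigeonhole on residue classes mod $2N+1$ produces the $2N$-separated subfamily of indices, and openness of the intersections $(A^*)^{i_K} Q \cap (B^*)^{j_{K,k}} P$ (finitely many of them) yields the common $\delta_0$. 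The only other difference is that the paper keeps a general coefficient vector $c$ and deduces the full equivalence $\|c\|_{\ell^q} \asymp \|c\|_{\ell^2}$ before concluding $q = 2$, whereas you specialize immediately to $c = \epsilon \in \{\pm 1\}^K$ and compare $K^{p/q}$ with $K^{p/2}$; both closures are valid and yield the same conclusion. Your observation that assumption (b) of \Cref{prop:TL-norm-estimate-sequence} is superfluous here since $p < \infty$ is correct (it matches the paper's own remark after that proposition).
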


The following observation will be key in proving \Cref{lem:KhintchineArgument}.
It provides a condition under which the hypotheses of \Cref{prop:TL-norm-estimate-sequence}
are satisfied for $\TL(A)$.

\begin{lemma}\label{lem:A-B-nequiv-construction}
  Let $A, B \in \GL(d, \mathbb{R})$ be expansive and suppose that $\sup_{j \in \Z} |I_j| = \infty$,
  with $I_j$ as defined in \Cref{eq:SpecializedIntersectionSets}.

  Then, for every $K \in \N$, there exist $\delta_0 > 0$, $j_0 \in \Z$, points
  $\eta_1, \dots \eta_K \in \R^d$, and a (strictly) increasing sequence
  $i_1, \dots, i_K \in \Z$ with $|i_k - i_{k'}| > 2N$ for $k \neq k'$,
  where $N \in \N$ as in \eqref{eq:NeighborControl}, such that the following assertions hold:
  \begin{enumerate}[(i)]
   \item $B_{\delta_0}(\eta_k) \subseteq (A^\ast)^{i_k}Q \cap (B^\ast)^{j_0} P$ for all $k = 1, \dots, K$;
   \item $|\phi(x)| \geq \frac{1}{2}|\phi(0)|$ for all  $x \in \delta_0 A^{-i_1}\Omega_A$.
  \end{enumerate}
  In particular, the assumptions (a) and (b) of
  \Cref{prop:TL-norm-estimate-sequence} are satisfied for $\TL(A)$.
\end{lemma}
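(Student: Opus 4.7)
The plan is to exploit the hypothesis $\sup_{j \in \Z} |I_j| = \infty$ to locate a single index $j_0$ whose associated set $I_{j_0}$ is so large that one can extract from it a strictly increasing subsequence $i_1 < \cdots < i_K$ with pairwise distance greater than $2N$, and then use openness of $Q$ and $P$ together with continuity of $\phi$ to produce a common radius $\delta_0$ working for every $k$.

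More precisely, first I would choose $j_0 \in \Z$ such that $|I_{j_0}| \geq (K-1)(2N+1) + 1$, which is possible by the hypothesis. Enumerating $I_{j_0}$ in increasing order as $m_1 < m_2 < \cdots < m_M$, the choice $i_k := m_{(k-1)(2N+1)+1}$ for $k = 1, \dots, K$ yields a strictly increasing subsequence, and since the integers $m_\ell$ are strictly increasing one has $i_{k+1} - i_k \geq 2N+1 > 2N$, so $|i_k - i_{k'}| > 2N$ whenever $k \neq k'$. For each $k$, the set $(A^\ast)^{i_k}Q \cap (B^\ast)^{j_0}P$ is open (since $Q, P$ are open) and nonempty (since $i_k \in I_{j_0}$), so I can pick $\eta_k$ in this intersection together with a radius $\delta^{(k)} > 0$ with $B_{\delta^{(k)}}(\eta_k) \subseteq (A^\ast)^{i_k}Q \cap (B^\ast)^{j_0}P$.

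Next I would handle condition (ii). Recall from the comment in the proof of Proposition~\ref{prop:TL-norm-estimate-sequence} that $\phi(0) = \int_{\R^d} \widehat{\phi}(\xi) \, d\xi > 0$ because $\widehat{\phi} \geq 0$ and $\widehat{\phi} \not\equiv 0$. By continuity of $\phi$, there is an open neighborhood $U$ of the origin on which $|\phi(x)| \geq \tfrac{1}{2} |\phi(0)|$. Since $A^{-i_1}\Omega_A$ is a fixed bounded set (the ellipsoid $\Omega_A$ is bounded and $i_1$ is now fixed), there exists $\delta^{(0)} > 0$ with $\delta^{(0)} A^{-i_1}\Omega_A \subseteq U$. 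Setting $\delta_0 := \min \{ \delta^{(0)}, \delta^{(1)}, \dots, \delta^{(K)} \} > 0$ yields both (i) and (ii) simultaneously, because $B_{\delta_0}(\eta_k) \subseteq B_{\delta^{(k)}}(\eta_k)$ and $\delta_0 A^{-i_1}\Omega_A \subseteq \delta^{(0)} A^{-i_1}\Omega_A \subseteq U$.

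The only genuine subtlety is the sparse selection of the $i_k$ from $I_{j_0}$; once this is in place, everything else is a matter of openness and continuity, and the fact that $i_1$ is already chosen before $\delta_0$ is selected, so that condition (ii) imposes no further constraint on $i_1$ itself. The final sentence of the lemma, concerning assumptions (a) and (b) of Proposition~\ref{prop:TL-norm-estimate-sequence}, is then immediate from (i) and (ii), since $B_{\delta_0}(\eta_k) \subseteq (A^\ast)^{i_k}Q$ is precisely assumption (a).
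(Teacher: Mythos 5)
Your proof is correct and follows essentially the same approach as the paper: locate $j_0$ with $I_{j_0}$ large, extract a sparse strictly increasing subsequence $i_1 < \cdots < i_K$ with gaps $> 2N$, and then use openness of $(A^\ast)^{i_k}Q \cap (B^\ast)^{j_0}P$ and continuity of $\phi$ at $0$ to find a common radius $\delta_0$. The only difference is cosmetic: you extract the sparse subsequence by taking every $(2N+1)$-th element of the sorted enumeration of $I_{j_0}$, whereas the paper pigeonholes $I_{j_0}$ into residue classes modulo $2N+1$ and picks $K$ elements from a single class; both yield the required separation and your variant is if anything slightly more direct.
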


\begin{proof}
  Let $K \in \N$ be arbitrary.
  Then, since $\sup_{j \in \Z} |I_j| = \infty$,
  there exists  $j_0 \in \Z$ such that $|I_{j_0}| \geq (2 N + 1) K$.
  Define $\Z_n := n + (2 N + 1) \, \Z$ for $n = 0,\dots,2N$.
  Since $I_{j_0} = \bigcup_{n = 0}^{2N} (I_{j_0} \cap \Z_n)$, there needs to
  be at least one $n_0 \in \{0, \dots, 2N\}$ such that
  $|I_{j_0} \cap \Z_{n_0}| \geq K$.
  Hence, we can choose a strictly increasing sequence $i_1, \dots, i_K \in I_{j_0} \cap \Z_{n_0}$,
  which in particular implies that $|i_k - i_{k'}| \geq 2N+1$ for $k \neq k'$.
  Since $(A^\ast)^{i_k}Q \cap (B^\ast)^{j_0}P \neq \emptyset$ is open for
  all $k=1, \dots, N$, there exist $\eta_1, \dots, \eta_K$ and a constant
  $\delta_1 > 0$ such that
  \begin{equation*}
    B_{\delta_1}(\eta_k) \subseteq (A^\ast)^{i_k}Q \cap (B^\ast)^{j_0} P
    \qquad \text{for all} \quad k=1, \dots, K.
  \end{equation*}
  Finally, continuity of $\phi \in \Schwartz(\R^d)$ implies
  (because of $|\phi(0)| = \phi(0) = \int_{\R^d} \widehat{\phi(\xi)} \, d \xi > 0$)
  the existence of $\delta_2 > 0$ such that
  $|\phi(x)| \geq \frac{1}{2}|\phi(0)|$ for all
  $x \in \delta_2 A^{-i_1}\Omega_A$, which completes the proof by
  setting $\delta_0 := \min\{\delta_1, \delta_2\}$.
\end{proof}

Another key ingredient used in the proof of \Cref{lem:KhintchineArgument} is  Khintchine's inequality,
see, e.g., \cite[Proposition~4.5]{WolffLecturesOnHarmonicAnalysis}.
We include its statement for the convenience of the reader.

\begin{lemma}[Khintchine]\label{lem:KhintchineInequality}
  Let $\theta = (\theta_1, \dots, \theta_K)$ be a random vector with $\theta \sim U(\{ \pm 1 \}^K)$
  (i.e., $\PP (\theta = \eta) = \frac{1}{2^K}$ for every $\eta \in \{ \pm 1 \}^K$).
  For any $p \in (0, \infty)$, denoting the expectation with respect to $\theta$ by $\EE_{\theta}$,
  it holds that
  \[
    \EE_{\theta} \bigg|\sum_{k=1}^K \theta_k \, a_k\bigg|^p \asymp
    \bigg(\sum_{k=1}^{K} |a_k|^2\bigg)^{p/2} \qquad \text{for all} \quad (a_k)_{k = 1}^K \in \CC^K,
  \]
  where the implied constant only depends on $p$.
\end{lemma}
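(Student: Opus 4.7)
The plan is to establish Khintchine's inequality by combining an exact identity at $p = 2$ with subgaussian moment estimates for $p \geq 2$ and an interpolation argument for $p < 2$. Throughout, set $X := \sum_{k=1}^K \theta_k \, a_k$, so that $(\EE_\theta |X|^p)^{1/p}$ is the quantity to be controlled. The case $p = 2$ is an exact identity: since the Rademacher variables $\theta_k$ are independent and mean-zero, one has $\EE_\theta [\theta_k \overline{\theta_{k'}}] = \delta_{k,k'}$, and expanding $|X|^2$ yields $\EE_\theta |X|^2 = \sum_{k=1}^{K} |a_k|^2 = \|a\|_{\ell^2}^2$. This anchors both sides of the claimed equivalence.

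For $p \geq 2$, I would first prove the upper bound via a subgaussian argument. After reducing to real coefficients by splitting $a_k = b_k + i c_k$ and estimating $\EE_\theta |X|^p \lesssim_p \EE_\theta |\sum \theta_k b_k|^p + \EE_\theta |\sum \theta_k c_k|^p$, the elementary inequality $\cosh(t) \leq e^{t^2/2}$ yields $\EE_\theta e^{t \sum \theta_k b_k} \leq e^{t^2 \|b\|_{\ell^2}^2 / 2}$ for all $t \in \R$. A Chernoff-type optimization in $t$ then produces the tail bound $\PP(|\sum \theta_k b_k| \geq s) \leq 2 \exp(-s^2 / (2 \|b\|_{\ell^2}^2))$, and integration via the layer-cake formula $\EE |Y|^p = p \int_0^\infty s^{p-1} \PP(|Y| \geq s) \, ds$ gives $\EE|\sum \theta_k b_k|^p \lesssim_p \|b\|_{\ell^2}^p$. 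Summing the real and imaginary contributions and using $\|b\|_{\ell^2}^2 + \|c\|_{\ell^2}^2 = \|a\|_{\ell^2}^2$ yields $\EE_\theta |X|^p \lesssim_p \|a\|_{\ell^2}^p$. The matching lower bound for $p \geq 2$ is immediate from Jensen's inequality applied to the convex function $t \mapsto t^{p/2}$: $\|a\|_{\ell^2} = (\EE_\theta |X|^2)^{1/2} \leq (\EE_\theta |X|^p)^{1/p}$.

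For $p < 2$ the upper bound follows again from Jensen's inequality, namely $\EE_\theta |X|^p \leq (\EE_\theta |X|^2)^{p/2} = \|a\|_{\ell^2}^p$. The only genuinely delicate step, and the one I expect to be the main obstacle, is the lower bound in this regime. Here I plan to exploit log-convexity of $L^s$-norms (the three-line lemma): fixing an auxiliary exponent $r > 2$ (for instance $r = 4$) and choosing $\vartheta \in (0,1)$ so that $1/2 = (1-\vartheta)/p + \vartheta/r$, the interpolation inequality $\|X\|_{L^2(\PP)} \leq \|X\|_{L^p(\PP)}^{1-\vartheta} \, \|X\|_{L^r(\PP)}^{\vartheta}$ combined with the upper bound $\|X\|_{L^r(\PP)} \lesssim_r \|a\|_{\ell^2}$ from the previous step yields
\[
  \|a\|_{\ell^2} = \|X\|_{L^2(\PP)} \lesssim_r \|X\|_{L^p(\PP)}^{1-\vartheta} \, \|a\|_{\ell^2}^{\vartheta}.
\]
Rearranging and using $1 - \vartheta > 0$ gives $\|X\|_{L^p(\PP)} \gtrsim_p \|a\|_{\ell^2}$, which is exactly $\EE_\theta |X|^p \gtrsim_p \|a\|_{\ell^2}^p$. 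All implicit constants depend only on $p$, as required by the statement.
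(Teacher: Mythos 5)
The paper does not prove this lemma; it is stated as a standard result with a citation to Wolff's lecture notes, so there is no proof in the paper to compare against. Your proof is a correct, self-contained derivation along the standard subgaussian-plus-extrapolation route. Every step checks out: the $p=2$ identity from orthogonality of the Rademacher system, the reduction to real coefficients via $|X|^p \lesssim_p |{\rm Re}\,X|^p + |{\rm Im}\,X|^p$, the bound $\EE e^{t\sum\theta_k b_k} = \prod_k \cosh(tb_k) \leq e^{t^2\|b\|_{\ell^2}^2/2}$, the Chernoff tail estimate, the layer-cake integration, the recombination $\|b\|_{\ell^2}^p + \|c\|_{\ell^2}^p \leq \|a\|_{\ell^2}^p$ for $p \geq 2$, the Jensen bounds in both regimes, and in particular the extrapolation step for $p<2$: with $r>2$ fixed and $\vartheta\in(0,1)$ solving $1/2 = (1-\vartheta)/p + \vartheta/r$, the log-convexity inequality $\|X\|_{L^2} \leq \|X\|_{L^p}^{1-\vartheta}\|X\|_{L^r}^{\vartheta}$ (valid also for quasi-norms $p<1$) together with the already-established $L^r$ upper bound does indeed yield $\|X\|_{L^p} \gtrsim_p \|a\|_{\ell^2}$ after cancelling the common factor $\|a\|_{\ell^2}^{\vartheta}$, with constants depending only on $p$ once $r$ is fixed. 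This is one of the two classical proofs of Khintchine's inequality (the other being the combinatorial expansion of even moments for the upper bound), and it matches what is found in standard references such as the one cited.
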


We will now provide the proof of \Cref{lem:KhintchineArgument}.

\begin{proof}[Proof of \Cref{lem:KhintchineArgument}]
  If $A$ and $B$ are not equivalent, then neither are $A^\ast$ and $B^\ast$
  (cf.\ \Cref{cor:adjoint_equivalent}).
  Hence, an application of \Cref{lem:hom_covers}
  implies for  $\big((A^\ast)^i Q\big)_{i \in \Z}$ and
  $\big((B^\ast)^j P\big)_{j \in \Z}$ that
  \begin{equation*}
    \sup_{i \in \Z} |J_i| + \sup_{j \in \Z} |I_j|  = \infty.
  \end{equation*}
  By exchanging the roles of $A$ and $B$ if necessary, it may be assumed that
  $\sup_{j \in \Z} |I_j| = \infty$, so that the assumption of Lemma~\ref{lem:A-B-nequiv-construction} is satisfied.
  Using Lemma \ref{lem:A-B-nequiv-construction}, it will be shown that
  \begin{equation}
    \label{eq:KhintchineProofTargetEstimate}
    \| c \|_{\ell^q}
    \asymp \| c \|_{\ell^2 }
    \qquad \text{for all} \quad \, K \in \N \text{ and } c \in \CC^K,
  \end{equation}
  where the implied constant is independent of $K$ and $c$.
  This easily implies $q = 2$.

  For showing \eqref{eq:KhintchineProofTargetEstimate}, let $K \in \N$ and $c \in \CC^K$ be arbitrary.
  Then an application of \Cref{lem:A-B-nequiv-construction} yields some $j_0 \in \Z$,
  points $\eta_1, \dots, \eta_K \in \R^d$, a strictly increasing sequence $i_1, \dots, i_K \in \Z$,
  and $\delta_0 > 0$ such that $B_{\delta_0}(\eta_k) \subseteq (B^\ast)^{j_0} P$
  for all $k \in \{ 1,\dots,K \}$ and such that the assumptions
  of \Cref{prop:TL-norm-estimate-sequence} are satisfied.
  \Cref{prop:TL-norm-estimate-sequence} thus implies
  for fixed but arbitrary $0 < \delta \leq \delta_0$,
  and any $\theta \in \{ \pm 1 \}^K$ that the function
  \(
    f_{\theta}: = \sum_{k = 1}^K \theta_k \, c_k M_{\eta_k}\phi_\delta
  \)
  satisfies $\| f_\theta \|_{\TLzero (A)} \asymp \delta^{d (1 - \frac{1}{p})} \| c \|_{\ell^q}$.
  On the other hand, it holds $\supp \widehat{f_{\theta}} \subseteq (B^\ast)^{j_0} P$
  for all $0 < \delta \leq \delta_0$, and thus \Cref{prop:TL-norm-estimate-simple-Fourier-support}
  is applicable for $\TLzero(B)$.
  Consequently, \Cref{eq:TLone_coincidence} implies that
  \begin{equation}
    \label{eq:KhintchineArgumentAlmostDone}
    \delta^{d  (1 -1/p)} \| c \|_{\ell^q }
    \asymp \| f_{\theta} \|_{\TLzero(A)}
    \asymp \| f_{\theta} \|_{\TLzero(B)}
    \asymp \| f_{\theta} \|_{L^p}
    \qquad \text{for all} \quad \theta \in \{ \pm 1 \}^K.
  \end{equation}
  Using Khintchine's inequality (\Cref{lem:KhintchineInequality}), we see that if we
  take $\theta \sim U(\{ \pm 1 \}^K)$ as a random vector, then
  \begin{align*}
    \EE_{\theta} \| f_{\theta} \|_{L^p}^p
    & = \EE_{\theta}
        \int_{\R^d}
          \bigg|
            \sum_{k=1}^{K}
              \theta_k \, c_k \, e^{2 \pi i \eta_k \cdot x }
          \bigg|^p
           |\phi_\delta(x)|^p
        \, d x \\
    & = \int_{\R^d}
          |\phi_\delta(x)|^p \,
           \EE_{\theta}
                \bigg|
                  \sum_{k=1}^{K}
                    \theta_k \, c_k \, e^{2 \pi i  \eta_k \cdot x }
                \bigg|^p
        \, d x \\
    & \asymp_p \int_{\R^d}
                 |\phi_\delta(x)|^p
                 \bigg(
                   \sum_{k=1}^{K}
                     |c_k \,e^{2 \pi i  \eta_k \cdot x }|^2
                 \bigg)^{p/2}
               \, d x \\
    & \asymp_{p,\phi} \delta^{d  (p-1)} \| c \|_{\ell^2}^p .
  \end{align*}
  In combination with \eqref{eq:KhintchineArgumentAlmostDone}, this easily implies that
  \Cref{eq:KhintchineProofTargetEstimate} holds.
\end{proof}

The finer analysis in the case where $\alpha = 0$ and $q = 2$ can be performed by
using that $\dot{\mathbf{F}}^{0}_{p,2}(A)$ coincides with the anisotropic Hardy space $H^p (A)$
and using the classification results of \cite[Section 10]{bownik2003anisotropic}.
The details are as follows:

\begin{theorem}\label{thm:PFiniteQEqualTwoCase}
  Let $A, B \in \mathrm{GL}(d, \mathbb{R})$ be expansive and $p \in (0,\infty)$.
  If $\dot{\mathbf{F}}^{0}_{p,2}(A) = \dot{\mathbf{F}}^{0}_{p,2}(B)$,
  then at least one of the following cases holds:
  \begin{enumerate}
   \item[(i)] $A$ and $B$ are equivalent, or
   \item[(ii)] $p \in (1, \infty)$.
  \end{enumerate}
\end{theorem}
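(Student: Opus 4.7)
The plan is to reduce immediately to the case $p \in (0,1]$, since for $p \in (1,\infty)$ the theorem is trivially true by falling into case (ii). So assume $p \in (0,1]$ and aim to show that $A$ and $B$ must be equivalent.

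The key ingredient is the identification $\dot{\mathbf{F}}^{0}_{p,2}(A) = H^p(A)$ for $p \in (0,1]$, where $H^p(A)$ is the anisotropic Hardy space introduced and studied in \cite{bownik2003anisotropic}. This identification is standard and is recorded, e.g., in \cite{bownik2006atomic, bownik2007anisotropic}. Applying it to both $A$ and $B$ (with the norm equivalence furnished by \Cref{lem:CoincidenceImpliesNormEquivalence}) converts the hypothesis $\dot{\mathbf{F}}^{0}_{p,2}(A) = \dot{\mathbf{F}}^{0}_{p,2}(B)$ into the coincidence $H^p(A) = H^p(B)$ with equivalent quasi-norms. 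At this point we invoke the classification result of \cite[Section~10]{bownik2003anisotropic} (see also \cite{bownik2020pde} for the corrected version), which asserts that $H^p(A) = H^p(B)$ for some (equivalently, all) $p \in (0,1]$ if and only if the step homogeneous quasi-norms $\rho_A$ and $\rho_B$ are equivalent, i.e., if and only if $A$ and $B$ are equivalent in the sense of \Cref{sec:QuasiNorms}. This yields case~(i) and completes the proof.

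There is essentially no technical obstacle here: both the identification $\dot{\mathbf{F}}^{0}_{p,2}(A) = H^p(A)$ for $p \leq 1$ and Bownik's classification theorem are ready-made results in the literature, and the combinatorial/analytic work of the preceding subsection is not needed. The only point requiring a little care is the passage from set-theoretic coincidence to quasi-norm equivalence, which is handled cleanly by \Cref{lem:CoincidenceImpliesNormEquivalence}. Consequently the proof should be short, amounting to little more than citing the two external results and splitting on the range of $p$.
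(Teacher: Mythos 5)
Your proposal is correct and follows essentially the same route as the paper: it identifies $\dot{\mathbf{F}}^{0}_{p,2}(A)$ with the anisotropic Hardy space $H^p(A)$ (the paper cites \cite[Theorem~7.1]{bownik2007anisotropic} for this) and then invokes Bownik's classification (\cite[Theorem~10.5]{bownik2003anisotropic}, corrected in \cite{bownik2020pde}) for $p \in (0,1]$, with the case $p \in (1,\infty)$ falling into alternative (ii) trivially. The only cosmetic difference is that the paper performs the Hardy-space identification uniformly for all $p \in (0,\infty)$ before splitting on the range of $p$, whereas you reduce to $p \le 1$ from the outset; this does not change the substance of the argument.
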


\begin{proof}
Let $p \in (0,\infty)$ and denote by $H^p(A)$ the anisotropic Hardy space
introduced in \cite{bownik2003anisotropic}.
By \cite[Theorem 7.1]{bownik2007anisotropic}, it follows that $\dot{\mathbf{F}}^{0}_{p,2}(A) = H^p (A)$.
Hence, if $\dot{\mathbf{F}}^{0}_{p,2}(A) = \dot{\mathbf{F}}^{0}_{p,2}(B)$, then $H^p (A) = H^p (B)$.

If $p \in (0, 1]$, then by \mbox{\cite[Theorem 10.5]{bownik2003anisotropic}}
(see also \cite[Theorem 2.3]{bownik2020pde} for a corrected statement),
the identity $H^p (A) = H^p (B)$ implies that $A$ and $B$ are equivalent.
Thus, (i) holds.
\end{proof}

A combination of \Cref{lem:KhintchineArgument} and \Cref{thm:PFiniteQEqualTwoCase} yields the following:

\begin{corollary}
  Let $A,B \in \GL(d,\R)$ be expansive, $p \in (0,\infty)$ and $q \in (0,\infty]$.
  Suppose that $\TLzero(A) = \TLzero(B)$.
  Then at least one of the following cases holds:
  \begin{enumerate}[(i)]
   \item $A$ and $B$ are equivalent;
   \item $q = 2$ and $p \in (1,\infty)$.
  \end{enumerate}
\end{corollary}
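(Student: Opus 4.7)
The plan is to argue by contrapositive, assuming that alternative (i) fails, and then to combine the two preceding results to obtain alternative (ii). Suppose therefore that $A$ and $B$ are not equivalent. Since $\TLzero(A) = \TLzero(B)$ holds in particular on the dense subspace $\mathcal{F}^{-1}(C_c^\infty(\R^d \setminus \{0\})) \subseteq \Schwartz_0(\R^d)$, an application of \Cref{lem:CoincidenceImpliesNormEquivalence} ensures that the norm equivalence hypothesis of \Cref{lem:KhintchineArgument} is met. That lemma then immediately yields $q = 2$.

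With $q = 2$ in hand, the coincidence $\TLzero(A) = \TLzero(B)$ reads $\dot{\mathbf{F}}^{0}_{p,2}(A) = \dot{\mathbf{F}}^{0}_{p,2}(B)$, which puts us exactly in the setting of \Cref{thm:PFiniteQEqualTwoCase}. Since we have ruled out that $A$ and $B$ are equivalent, case (i) of that theorem cannot occur, so its case (ii) must hold, i.e.\ $p \in (1, \infty)$. Combining the two conclusions gives precisely alternative (ii) of the corollary.

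The argument is a direct two-step combination of the already established results, so there is no substantial technical obstacle: the Khintchine-type argument in \Cref{lem:KhintchineArgument} has done the work of forcing $q = 2$, and the Hardy-space classification result of \cite{bownik2003anisotropic} (as invoked in \Cref{thm:PFiniteQEqualTwoCase}) handles the residual case $q = 2$ by restricting the range of $p$. The only care needed is to ensure the hypotheses of \Cref{lem:KhintchineArgument} are verified from the spatial coincidence of the spaces, which follows from \Cref{lem:CoincidenceImpliesNormEquivalence}.
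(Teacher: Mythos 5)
Your proposal is correct and matches the paper's approach exactly: the paper states the corollary with the one-line remark that it follows by combining \Cref{lem:KhintchineArgument} and \Cref{thm:PFiniteQEqualTwoCase}, which is precisely the two-step argument you give. The only minor redundancy is that \Cref{lem:KhintchineArgument} already contains the ``in particular, if $\TLzero(A)=\TLzero(B)$ and $A,B$ are not equivalent, then $q=2$'' conclusion, so the explicit appeal to \Cref{lem:CoincidenceImpliesNormEquivalence} is not needed (it is already built into that theorem), but this does not affect correctness.
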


\subsection{The case \texorpdfstring{$\alpha = 0$}{α = 0} and \texorpdfstring{$p = \infty$}{p = ∞}}%
\label{sub:PInftyAlmostProof}

This section provides the following theorem, which finishes the
necessary conditions of \Cref{thm:NecessaryCondition}.

\begin{theorem}\label{lem:PInftyMainArgument}
  Let $A,B \in \GL(d, \R)$ be expansive and $q \in (0, \infty]$.
  If $\TLnaked^{0}_{\infty,q} (A) = \TLnaked^0_{\infty,q}(B)$, then $A$ and $B$ are equivalent.
\end{theorem}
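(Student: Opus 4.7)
The plan is to reduce the case $p = \infty$ to previously handled cases via dual-norm characterizations, since the Khintchine argument of \Cref{lem:KhintchineArgument} is unavailable in $L^\infty$; I split into three subcases in $q \in (0,\infty]$. For $q = \infty$, \Cref{thm:maximal_characterizations}(iii) identifies $\dot{\mathbf{F}}^0_{\infty,\infty}(A)$ with the anisotropic Besov space $\dot{\mathbf{B}}^0_{\infty,\infty}(A)$, so the hypothesis reduces to coincidence of Besov spaces, and the classification in \cite{cheshmavar2020classification} directly yields $A \sim B$.

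For $q \in (1,\infty)$, I would invoke the duality $\dot{\mathbf{F}}^0_{\infty,q}(A) = (\dot{\mathbf{F}}^0_{1,q'}(A))^{\ast}$ with $q' := q/(q-1) \in (1,\infty)$ under the canonical pairing of $\SP$ (cf.\ \cite{bownik2008duality}). Combined with \Cref{lem:CoincidenceImpliesNormEquivalence} and a standard polar-set/Hahn--Banach argument, equivalence of the $\dot{\mathbf{F}}^0_{\infty,q}$-norms on the two sides transfers to equivalence of the predual norms, yielding $\dot{\mathbf{F}}^0_{1,q'}(A) = \dot{\mathbf{F}}^0_{1,q'}(B)$ with equivalent norms. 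Applying \Cref{lem:KhintchineArgument} with $p = 1 \notin (1,\infty)$ rules out the alternative in its dichotomy and gives $A \sim B$; in the borderline subcase $q = 2 = q'$, the identification $\dot{\mathbf{F}}^0_{1,2} = H^1$ together with the Hardy-space classification \cite[Section 10]{bownik2003anisotropic} again yields $A \sim B$.

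For $q \in (0,1)$, I would argue directly via test functions as in \Cref{sub:NecessityProofAlphaZero}. Assuming $A \not\sim B$, \Cref{cor:adjoint_equivalent} and \Cref{lem:hom_covers} (after swapping $A,B$ if necessary) give $\sup_{j \in \Z} |I_j| = \infty$. For arbitrary $K \in \N$, \Cref{lem:A-B-nequiv-construction} supplies $j_0, \eta_1, \dots, \eta_K, i_1, \dots, i_K, \delta_0$ satisfying the hypotheses of \Cref{prop:TL-norm-estimate-sequence} with additionally $B_{\delta_0}(\eta_k) \subseteq (B^\ast)^{j_0} P$ for every $k$. Setting $c_k \equiv 1$ in $f := \sum_{k=1}^K M_{\eta_k} \phi_\delta$, \Cref{prop:TL-norm-estimate-sequence} gives $\|f\|_{\dot{\mathbf{F}}^0_{\infty,q}(A)} \asymp \delta^d K^{1/q}$, while \Cref{prop:TL-norm-estimate-simple-Fourier-support} and the crude bound $\|f\|_{L^\infty} \leq K \delta^d \|\phi\|_{L^\infty}$ give $\|f\|_{\dot{\mathbf{F}}^0_{\infty,q}(B)} \lesssim K \delta^d$. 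Coincidence then forces $K^{1/q-1} \lesssim 1$ uniformly in $K$, which is impossible for $q < 1$.

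The main obstacle I anticipate is the boundary case $q = 1$, where the direct argument degenerates (since $1/q - 1 = 0$) and the duality $(\dot{\mathbf{F}}^0_{1,\infty}(A))^{\ast} = \dot{\mathbf{F}}^0_{\infty,1}(A)$ is more delicate than in the reflexive range $q \in (1,\infty)$. I expect to close this gap either by pushing the duality argument through at $q = 1$ via an atomic/molecular description of $\dot{\mathbf{F}}^0_{1,\infty}(A)$, or by randomizing the coefficients $c_k \leftrightarrow \theta_k \in \{\pm 1\}$ in the above test function and using Salem--Zygmund-type bounds on $\|f_\theta\|_{L^\infty}$ to refine the direct argument into a contradiction at $q = 1$ as well.
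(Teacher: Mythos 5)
Your overall strategy coincides with the paper's: rule out $q < 1$ by a direct test-function argument (your $q \in (0,1)$ subcase is precisely the content of \Cref{lem:PInftyBanachReduction}, just specialized to $c_k \equiv 1$), then for $q > 1$ pass through the duality $\dot{\mathbf{F}}^0_{\infty,q}(A) = (\dot{\mathbf{F}}^0_{1,q'}(A))^\ast$ of \cite{bownik2008duality} to obtain norm equivalence at $p = 1$, apply \Cref{lem:KhintchineArgument} to force $q' = 2$, and then close with the Hardy-space classification via \Cref{thm:PFiniteQEqualTwoCase}. Your treatment of $q = \infty$ via the identification $\dot{\mathbf{F}}^0_{\infty,\infty}(A) = \dot{\mathbf{B}}^0_{\infty,\infty}(A)$ and the Besov classification of \cite{cheshmavar2020classification} is a valid shortcut for that endpoint; the paper instead absorbs $q = \infty$ into the duality argument (with $q' = 1$), but both routes are sound.

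The genuine gap is exactly where you flag it: $q = 1$. You observe correctly that the $\ell^1$ test-function estimate degenerates ($1/q - 1 = 0$) and that $\dot{\mathbf{F}}^0_{1,\infty}(A)$ is not the dual of $\dot{\mathbf{F}}^0_{\infty,1}(A)$ in the straightforward Banach-space sense, so the Hahn--Banach transfer you use for $q \in (1,\infty]$ is not available. But you do not close this case; you only propose two candidate routes (an atomic/molecular description of $\dot{\mathbf{F}}^0_{1,\infty}$, or a Salem--Zygmund refinement of the randomized test functions). The first of these is in fact the paper's route, and it is not a small step: it requires \Cref{prop:dualnormF1inf}, whose proof occupies all of Appendix B — an anisotropic tent/Carleson description of the sequence space $\dot{\mathbf{f}}^0_{\infty,1}(A)$ (\Cref{lem:tent-superset}, \Cref{lem:fi1_char_tent}, \Cref{lem:carleson_fi1}), Hänninen's equivalence \cite{hanninen2018equivalence} (\Cref{lem:carleson_equiv}), and Köthe-space duality in the sense of Zaanen (\Cref{lem:norming_sequence}). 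Since $q = 1$ is precisely the value where the real technical work in this theorem lies, the proposal as written does not constitute a proof; it reduces to the hard case and then stops.
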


The following lemma will reduce the proof of \Cref{lem:PInftyMainArgument} to the case $q \geq 1$.

\begin{lemma}\label{lem:PInftyBanachReduction}
  Let $A,B \in \GL(d, \R)$ be expansive and $q \in (0, \infty]$.
  If $\TLnaked^{0}_{\infty,q} (A) = \TLnaked^0_{\infty,q}(B)$ and
  the matrices $A$ and $B$ are not equivalent, then $q \geq 1$.
\end{lemma}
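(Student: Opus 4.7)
The plan is to adapt the proof strategy of \Cref{lem:KhintchineArgument} to the case $p = \infty$, but to extract a weaker conclusion about $q$: rather than invoking Khintchine's inequality to pinpoint $q = 2$, I would use only the trivial triangle inequality $\|f\|_{L^\infty} \leq \sum_k |c_k|\,\|\phi_\delta\|_{L^\infty}$, which turns out to be enough to force $\ell^q \hookrightarrow \ell^1$ uniformly in dimension, and hence $q \geq 1$.

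Suppose $A$ and $B$ are not equivalent, and hence neither are $A^\ast$ and $B^\ast$ by \Cref{cor:adjoint_equivalent}. By \Cref{lem:hom_covers} applied to the covers $\bigl((A^\ast)^i Q\bigr)_{i \in \Z}$ and $\bigl((B^\ast)^j P\bigr)_{j \in \Z}$, we have $\sup_{i \in \Z} |J_i| + \sup_{j \in \Z} |I_j| = \infty$. After exchanging the roles of $A$ and $B$ if necessary, it may be assumed that $\sup_{j \in \Z} |I_j| = \infty$. Given arbitrary $K \in \N$, \Cref{lem:A-B-nequiv-construction} then produces some $j_0 \in \Z$, points $\eta_1, \dots, \eta_K \in \R^d$, a strictly increasing sequence $i_1, \dots, i_K \in \Z$ with $|i_k - i_{k'}| > 2N$ for $k \neq k'$, and $\delta_0 > 0$ such that $B_{\delta_0}(\eta_k) \subseteq (A^\ast)^{i_k} Q \cap (B^\ast)^{j_0} P$ for all $k = 1, \dots, K$, and such that the hypotheses (a) and (b) of \Cref{prop:TL-norm-estimate-sequence} are met for $\TLnaked^{0}_{\infty,q}(A)$.

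For $c \in \CC^K$ and $0 < \delta \leq \delta_0$, I would consider the test function $f := \sum_{k=1}^K c_k\,M_{\eta_k} \phi_\delta \in \Schwartz(\R^d)$. Since $\supp \widehat{f} \subseteq (B^\ast)^{j_0} P$ and $\alpha = 0$, \Cref{prop:TL-norm-estimate-simple-Fourier-support} applied to $B$ gives $\|f\|_{\TLnaked^{0}_{\infty,q}(B)} \asymp \|f\|_{L^\infty}$. Simultaneously, \Cref{prop:TL-norm-estimate-sequence} applied to $A$ (with $\alpha = 0$, $p = \infty$) yields $\|f\|_{\TLnaked^{0}_{\infty,q}(A)} \asymp \delta^d \|c\|_{\ell^q}$. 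Combining these equivalences with the norm equivalence $\|\cdot\|_{\TLnaked^{0}_{\infty,q}(A)} \asymp \|\cdot\|_{\TLnaked^{0}_{\infty,q}(B)}$ supplied by \Cref{lem:CoincidenceImpliesNormEquivalence}, together with the crude bound $\|f\|_{L^\infty} \leq \|c\|_{\ell^1}\,\|\phi_\delta\|_{L^\infty} \lesssim_\phi \delta^d \|c\|_{\ell^1}$, I obtain
\[
  \delta^d \|c\|_{\ell^q}
  \asymp \|f\|_{\TLnaked^{0}_{\infty,q}(A)}
  \asymp \|f\|_{\TLnaked^{0}_{\infty,q}(B)}
  \asymp \|f\|_{L^\infty}
  \lesssim_\phi \delta^d \|c\|_{\ell^1},
\]
so that $\|c\|_{\ell^q} \lesssim \|c\|_{\ell^1}$ with an implicit constant independent of $K$ and $c$. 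Testing against $c = (1, \dots, 1) \in \CC^K$ gives $K^{1/q} \lesssim K$ for all $K \in \N$, which is only possible if $q \geq 1$.

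The main obstacle is essentially combinatorial bookkeeping: one has to arrange that the geometric data required by \Cref{prop:TL-norm-estimate-simple-Fourier-support} applied to $B$ (all Fourier mass concentrated in a single cell $(B^\ast)^{j_0} P$) and by \Cref{prop:TL-norm-estimate-sequence} applied to $A$ (Fourier supports spread over many well-separated cells $(A^\ast)^{i_k} Q$) are realized simultaneously by the same auxiliary function $f$. This compatibility is precisely what \Cref{lem:A-B-nequiv-construction} was engineered to deliver, using the hypothesis $\sup_j |I_j| = \infty$; once the construction is in place, the rest is a two-line comparison of sequence-norm estimates, in which the Khintchine step of \Cref{lem:KhintchineArgument} is replaced by the much weaker $L^\infty$-triangle inequality.
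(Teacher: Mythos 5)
Your proposal is correct and takes essentially the same approach as the paper: invoke \Cref{lem:A-B-nequiv-construction} to build $f = \sum_k c_k M_{\eta_k}\phi_\delta$ with Fourier support spread over $K$ well-separated $A$-cells but confined to a single $B$-cell, then compare $\|f\|_{\TLnaked^{0}_{\infty,q}(A)} \asymp \delta^d\|c\|_{\ell^q}$ against $\|f\|_{\TLnaked^{0}_{\infty,q}(B)} \asymp \|f\|_{L^\infty} \lesssim \delta^d\|c\|_{\ell^1}$ to force $\ell^q \hookrightarrow \ell^1$ uniformly. The only cosmetic difference is that the paper dispatches $q = \infty$ as trivial up front, whereas you make the final step explicit by testing $c = (1,\dots,1)$; both are fine.
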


\begin{proof}
  The claim is trivial for $q = \infty$; therefore, we can assume that $q < \infty$.
  Since $A$ and $B$ are not equivalent,
  \Cref{cor:adjoint_equivalent} and \Cref{lem:hom_covers} again
  imply for the covers $\big((A^\ast)^i Q\big)_{i \in \Z}$ and
  $\big((B^\ast)^j P\big)_{j \in \Z}$ that
  \begin{equation*}
    \sup_{i \in \Z} |J_i| + \sup_{j \in \Z} |I_j|  = \infty,
  \end{equation*}
  where we may assume $\sup_{j \in \Z} |I_j| = \infty$ by interchanging $A$
  and $B$ if necessary.

  For $K \in \N$ arbitrary, we now invoke
  \Cref{lem:A-B-nequiv-construction} to obtain $j_0 \in \Z$,
  $\eta_1, \dots, \eta_K \in \R^d$, a strictly increasing sequence $i_1, \dots, i_K \in \Z$,
  and some $\delta_0 > 0$ such that the assumptions of \Cref{prop:TL-norm-estimate-sequence}
  are satisfied and such that $B_{\delta_0}(\eta_k) \subseteq (B^\ast)^{j_0} P$
  for all $k \in \{ 1,\dots,K \}$.
  \Cref{prop:TL-norm-estimate-sequence} thus implies for any $0 < \delta \leq \delta_0$
  that each of the functions
  \begin{equation*}
    f_c := \sum_{k=1}^{K} c_k M_{\eta_k} \phi_\delta,
    \qquad c \in \CC^K,
  \end{equation*}
  satisfies $\| f_c \|_{\TLnaked^{0}_{\infty,q} (A)} \asymp \delta^d \, \| c \|_{\ell^q}$.
  Since $\supp \widehat{f_c} \subseteq (B^\ast)^{j_0}P$,
  \Cref{prop:TL-norm-estimate-simple-Fourier-support} is applicable for
  $\TLnaked^{0}_{\infty,q} (B)$.
  Consequently, and recalling \eqref{eq:CoincidenceImpliesNormEquivalence}, we see that
  \begin{equation*}
    \delta^d \, \|c\|_{\ell^q}
    \asymp \| f_c \|_{\TLnaked^0_{\infty,q} (A)}
    \asymp \| f_c \|_{\TLnaked^0_{\infty,q} (B)}
    \asymp \| f_c \|_{L^\infty}
    \leq \| c \|_{\ell^1} \, \| \phi_\delta \|_{L^\infty}
    \lesssim  \delta^d \, \| c \|_{\ell^1},
  \end{equation*}
  which can only hold for $q \geq 1$.
\end{proof}

By duality, we now provide a proof of Theorem~\ref{lem:PInftyMainArgument}.

\begin{proof}[Proof of \Cref{lem:PInftyMainArgument}]
  Arguing by contradiction, we assume that $A$ and $B$ are not equivalent.
  Then \Cref{lem:PInftyBanachReduction} implies that $q \geq 1$.

  First, suppose that $q \in (1, \infty]$, so that its conjugate
  exponent $q'$ satisfies $q' \in [1,\infty)$.
  Then \mbox{\cite[Theorem~4.8]{bownik2008duality}} shows that
  $\TLnaked^0_{\infty,q} (A)$ is the dual space of
  $\TLnaked^0_{1,q'}(A)$ (with equivalent norms).
  Likewise, it follows that $\TLnaked^0_{\infty,q}(B)$ is the dual space of
  $\TLnaked^0_{1,q'}(B)$ (with equivalent norms).
  By the first part of \Cref{lem:CoincidenceImpliesNormEquivalence}, we have for
  \(
    f \in [\TLnaked^0_{1,q'}(A)]'
        = \TLnaked^0_{\infty,q}(A)
        = \TLnaked^0_{\infty,q}(B)
        = [\TLnaked^0_{1,q'}(B)]'
  \)
  that
  \[
    \| f \|_{[\TLnaked^0_{1,q'} (A)]'}
    \asymp \| f \|_{\TLnaked^0_{\infty,q} (A)}
    \asymp \| f \|_{\TLnaked^0_{\infty,q} (B)}
    \asymp \| f \|_{[\TLnaked^0_{1,q'} (B)]'}
    .
  \]
  Therefore, by the usual dual characterization of the norm, it holds that
  \[
    \| g \|_{\TLnaked^0_{1,q'}(A)}
    = \sup_{\substack{f \in [\TLnaked^0_{1,q'}(A)]'\\ \| f \|_{[\TLnaked^0_{1,q'}(A)]'} \leq 1}}
        |\langle f, g \rangle|
    \asymp \sup_{\substack{f \in [\TLnaked^0_{1,q'}(B)]'\\ \| f \|_{[\TLnaked^0_{1,q'}(B)]'} \leq 1}}
           |\langle f, g \rangle|
    = \| g \|_{\TLnaked^0_{1,q'}(B)}, \quad g \in \mathcal{S}_{0} (\mathbb{R}^d).
  \]
  Second, if $q = 1$, then it follows directly from
  \Cref{prop:dualnormF1inf} that
  \[
    \| g \|_{\dot{\mathbf{F}}^{0}_{1,\infty}(A)}
    \asymp \sup_{\substack{ f \in \dot{\mathbf{F}}^0_{\infty, 1}(A) \\
                 \| f \|_{\dot{\mathbf{F}}^0_{\infty, 1}(A) } \leq 1} }
             |\langle f, g \rangle|
    \asymp \sup_{\substack{ f \in \dot{\mathbf{F}}^0_{\infty, 1}(B) \\
                 \| f \|_{\dot{\mathbf{F}}^0_{\infty, 1}(B) } \leq 1} }
             |\langle f, g \rangle|
    \asymp \| g \|_{\dot{\mathbf{F}}^{0}_{1,\infty}(B)},
    \quad g \in \mathcal{S}_{0} (\mathbb{R}^d).
  \]

  In combination, for any $q \in [1, \infty]$, this yields
  $ \| g \|_{\dot{\mathbf{F}}^{0}_{1,q'}(A)} \asymp \| g \|_{\dot{\mathbf{F}}^{0}_{1,q'}(B)}$ for all
  $g \in \mathcal{S}_{0} (\mathbb{R}^d)$.
  Since $A$ and $B$ are not equivalent, an application of \Cref{lem:KhintchineArgument}
  shows that $q' = 2$ and hence $q = 2$.
  But for $p = 1$, $q = 2$, the above norm equivalence holds on a common dense subset,
  hence $\dot{\mathbf{F}}^{0}_{1,2}(A) = \dot{\mathbf{F}}^{0}_{1,2}(B)$ by
  the second part of \Cref{lem:CoincidenceImpliesNormEquivalence}.
  Now \Cref{thm:PFiniteQEqualTwoCase} implies that $A$ and $B$ need
  to be equivalent, a contradiction.
\end{proof}

\section{Sufficient conditions}
\label{sec:sufficient}

This section is devoted to the sufficient conditions of \Cref{thm:intro2}
and consists of the proof of the following theorem.
A key ingredient used in the proof is the maximal characterization of Triebel-Lizorkin spaces
(see \Cref{thm:maximal_characterizations}).

\begin{theorem}
  Let $A, B \in \GL(d,\R)$ be two expansive matrices.
  If $A$ and $B$ are equivalent, then $\TL(A) = \TL(B)$ for all $p,q \in (0,\infty]$
  and $\alpha \in \R$.
\end{theorem}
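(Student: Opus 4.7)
The plan is to prove both inclusions $\TL(A) \subseteq \TL(B)$ and $\TL(B) \subseteq \TL(A)$ by symmetry, leveraging the Peetre-type maximal characterization (Theorem~\ref{thm:maximal_characterizations}). First, I would fix analyzing vectors $\varphi$ for $A$ and $\psi$ for $B$, each satisfying the partition of unity property \eqref{eq:analyzing3}, with Fourier supports contained in relatively compact sets $Q, P \subseteq \R^d \setminus \{0\}$. The central idea is to insert the $B$-partition of unity into the $A$-window: since $\sum_{j \in \Z} \widehat{\psi_j} = 1$ on $\R^d \setminus \{0\}$, one has
\[
  f \ast \varphi_i = \sum_{j \in \Z} (f \ast \psi_j) \ast \varphi_i = \sum_{j \in J_i} (f \ast \psi_j) \ast \varphi_i,
\]
where $J_i = \{ j \in \Z : (A^\ast)^i Q \cap (B^\ast)^j P \neq \emptyset \}$ is finite with $|J_i| \leq 2N+1$ uniformly in $i$ by Corollary~\ref{lem:IjJiInclusions}, because $A, B$ are equivalent.

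The key pointwise estimate proceeds by bounding each summand using the Peetre-type maximal function of $f$ with respect to $\psi$ and $B$. From the defining property of $\psi_{j,\beta}^{\ast\ast} f$, one gets
\[
  |(f \ast \psi_j \ast \varphi_i)(x)| \leq \psi^{\ast\ast}_{j,\beta} f(x) \int_{\R^d} (1 + \rho_B(B^j y))^\beta \, |\varphi_i(y)| \, dy,
\]
and after the substitution $z = A^i y$ the integral becomes $\int (1 + \rho_B(B^j A^{-i} z))^\beta |\varphi(z)| \, dz$. For $j \in J_i$, Corollary~\ref{lem:IjJiInclusions} forces $|j - \lfloor c i\rfloor| \leq N$, and Lemma~10.2 together with the equivalence of $A$ and $B$ implies $\|B^j A^{-i}\|$ is uniformly bounded. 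Combined with the polynomial control \eqref{eq:ExpansiveConsequence} on $\rho_B$ and the Schwartz decay of $\varphi$, this integral is uniformly bounded in $i, j$. Using Lemma~\ref{lem:detQuotient} to replace $|\det A|^{\alpha i}$ by $|\det B|^{\alpha j}$ (up to a universal constant) for $j \in J_i$, I obtain the master pointwise inequality
\[
  |\det A|^{\alpha i} \, |(f \ast \varphi_i)(x)| \lesssim \sum_{j \in J_i} |\det B|^{\alpha j} \, \psi^{\ast\ast}_{j,\beta} f(x),
\]
valid for all $i \in \Z$, $x \in \R^d$, with implicit constant independent of $i, f$.

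From here, the proof splits by cases of $(p, q)$. For $p \in (0, \infty)$, I would take the $\ell^q$ norm in $i$, swap to summation in $j$ using the uniform bounds $|J_i|, |I_j| \leq 2N+1$, then take the $L^p$ norm and invoke Theorem~\ref{thm:maximal_characterizations}(i) for a suitable $\beta > \max\{1/p, 1/q\}$ to conclude $\|f\|_{\TL(A)} \lesssim \|f\|_{\TL(B)}$. The case $p = q = \infty$ is handled via equivalence \eqref{eq:Bii}, which reduces the norm to $\sup_i |\det A|^{\alpha i} \|f \ast \varphi_i\|_{L^\infty}$, where the bound is immediate from the master inequality. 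The case $p = \infty$, $q < \infty$ uses Theorem~\ref{thm:maximal_characterizations}(ii); this is the main technical obstacle, because the averaged norm is defined in terms of integrals over $A^\ell \Omega_A + w$ and a one-sided sum $i \geq -\ell$, both intrinsically tied to $A$. To convert to the $B$-side, I would use the equivalence $\rho_A \asymp \rho_B$ to show that $A^\ell \Omega_A$ is contained in (a bounded number of translates of) $B^{\lfloor c \ell \rfloor + O(1)} \Omega_B$, together with $|\det A|^\ell \asymp |\det B|^{\lfloor c \ell \rfloor}$ from $c = \ln|\det A|/\ln|\det B|$, and check that the truncation $i \geq -\ell$ maps under $I_j$ into a truncation $j \geq -\lfloor c\ell\rfloor - O(1)$. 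Once this geometric comparison is in place, taking the supremum on the $A$-side is dominated by the supremum on the $B$-side, completing the estimate. Finally, swapping $A$ and $B$ yields the reverse inclusion, hence $\TL(A) = \TL(B)$.
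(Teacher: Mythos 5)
Your proposal is correct and follows essentially the same route as the paper: insert the $B$-partition of unity into $f \ast \varphi_i$, bound each term by the Peetre maximal function $\psi^{**}_{j,\beta}f$ after uniformly controlling $\int (1 + \rho_B(B^j A^{-i}z))^\beta |\varphi(z)|\,dz$, swap $|\det A|^{\alpha i}$ for $|\det B|^{\alpha j}$ via Lemma~\ref{lem:detQuotient}, then case-split on $(p,q)$ using Theorem~\ref{thm:maximal_characterizations} (and, for $p = \infty$, $q < \infty$, the geometric comparison of $A^\ell\Omega_A$ with $B^{\lfloor c\ell\rfloor+O(1)}\Omega_B$). The only cosmetic difference is that the paper bounds the integral by passing directly through $\rho_B \asymp \rho_A$ and anisotropic Schwartz decay rather than via operator-norm bounds on $B^jA^{-i}$, but this does not change the substance of the argument.
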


\begin{proof}
  Let $A, B \in \GL(d,\R)$ be two equivalent expansive matrices.
  Suppose $\varphi, \psi \in \Schwartz(\R^d)$ are analyzing vectors
  for $A$ respectively $B$ satisfying additionally \Cref{eq:analyzing3}, i.e., so that
  $Q:= \bigl\{ \xi \in \R^d \colon \widehat{\varphi} (\xi) \neq 0 \bigr\}$ and
  $P := \bigl\{ \xi \in \R^d \colon \widehat{\psi}(\xi) \neq 0 \bigr\}$ are relatively compact in
  $\mathbb{R}^d \setminus \{0\}$, and
  \[
    \sum_{i \in \Z} \widehat{\varphi} \bigl( (A^*)^{-i} \xi \bigr) = 1
    =  \sum_{j \in \Z} \widehat{\psi} \bigl( (B^*)^{-j} \xi \bigr)
    \quad \text{for all} \quad \xi \in \mathbb{R}^d \setminus \{0\}.
  \]
  Then $\bigl( (A^*)^i Q \bigr)_{i \in \Z}$ and $\bigl( (B^*)^j P \bigr)_{j \in \Z}$ are covers
  of $\mathbb{R}^d \setminus \{0\}$.
  Furthermore, a straightforward calculation
  yields $\widehat{\varphi_i} = \widehat{\varphi} ((A^*)^{-i} \cdot)$ and
  $\widehat{\psi_j} = \widehat{\psi} ((B^*)^{-j} \cdot)$, and
  hence $\widehat{\varphi_i} \equiv 0$ outside of $(A^*)^i Q$ and
  $\widehat{\psi_j} \equiv 0$ outside of $(B^*)^j P$.
  Since $A$ and $B$ are equivalent, so are $A^*$ and $B^*$
  (cf.\ Corollary~\ref{cor:adjoint_equivalent}.)

  For fixed $i \in \Z$, define $\Psi_i \in \Schwartz(\R^d)$ as
  \begin{equation*}
    \Psi_i := \sum_{j \in J_i} \psi_j,
  \end{equation*}
  where $J_i := \{ j \in \Z: (A^*)^i Q \cap (B^*)^j P \neq \emptyset \}$
  is finite by \Cref{lem:hom_covers}.
  Clearly, $\widehat{\Psi_i} = \sum_{j \in J_i} \widehat{\psi_j} \equiv 1$ on
  $(A^*)^i Q \supseteq \bigl\{ \xi \in \R^d \colon \widehat{\varphi_i} (\xi) \neq 0 \bigr\}$
  by construction.
  Therefore,
  \begin{equation}
    \label{eq:ConvolutionIdentity}
    \varphi_i \ast \Psi_i = \varphi_i \quad \text{for all} \quad i \in \Z.
  \end{equation}
  We will use \eqref{eq:ConvolutionIdentity} to obtain a pointwise estimate
  of the convolution products $f \ast \varphi_i$, $i \in \mathbb{Z}$, in terms of the
  Peetre-type maximal function $\DoubleStar[\psi]{j}f$ for a fixed $\beta > \max \{ 1/p, 1/q \}$, defined by
  \begin{equation*}
    \DoubleStar[\psi]{j}f(x) = \sup_{z \in \R^d} \frac{|(f \ast
      \psi_j)(x+z)|}{(1+ \rho_B(B^jz))^\beta} \quad \text{for all} \quad x \in \R^d;
  \end{equation*}
  see Section \ref{sec:maximal}.
  For fixed $x \in \mathbb{R}^d$, a direct calculation gives
  \begin{align*}
    |(f \ast \varphi_i)(x)|
    &  \leq \sum_{j \in J_i} |(f \ast \psi_j\ast \varphi_i)(x)| \\
    &\leq \sum_{j \in J_i} \int_{\R^d}\frac{|(f \ast \psi_j)(x+y)|}{(1+ \rho_B(B^jy))^\beta}
      \cdot (1+ \rho_B(B^jy))^\beta  \, |\varphi_i(-y)| \, dy \\
    &\leq \sum_{j \in J_i} \DoubleStar[\psi]{j}f(x)
      \int_{\R^d} (1+ \rho_B(B^jy))^\beta  \, |\varphi_i(-y)| \, dy \\
    &= \sum_{j \in J_i} \DoubleStar[\psi]{j}f(x)
      \int_{\R^d} (1+ \rho_B(B^jy))^\beta  \, |\det A|^i \, |\varphi(-A^iy)| \, dy \\
    &= \sum_{j \in J_i} \DoubleStar[\psi]{j}f(x)
      \int_{\R^d} (1+ \rho_B(B^jA^{-i}z))^\beta \, |\varphi(-z)| \, dz.
          \numberthis \label{eq:ConvDecomp}
  \end{align*}
  To bound the integral in \eqref{eq:ConvDecomp}, we note that, since
  $\rho_A,\rho_B$ are equivalent, we have
  \begin{equation*}
    \rho_B(B^j A^{-i} z)
    = |\det B|^j \rho_B(A^{-i}z)
    \leq C |\det B|^j \rho_A(A^{-i}z)
    = C |\det B|^j |\det A|^{-i} \rho_A(z).
  \end{equation*}
  Lemma~\ref{lem:detQuotient} implies that
  $ |\det A|^i \asymp |\det B|^j$ for $j \in J_i$ with implicit
  constants independent of $i \in \Z, j \in J_i$.
  Consequently, $(1+ \rho_B(B^jA^{-i}z))^\beta \lesssim (1+ \rho_A(z))^\beta$ for
  all $z \in \mathbb{R}^d$.
  Since $\varphi \in \mathcal{S}(\mathbb{R}^d)$, it follows that for every
  $N \in \mathbb{N}$, there exists $C_N > 0$ such that
  $|\varphi(z)| \leq C_N (1+\rho_A (z))^{-N}$, see, e.g., \cite[Section 3]{bownik2003anisotropic}.
  Combining these observations with \eqref{eq:ExpansiveConsequence} easily yields
  \[
   \int_{\R^d} (1+ \rho_B(B^jA^{-i}z))^\beta \, |\varphi(-z)| \, dz \lesssim 1
  \]
  with implicit constant independent of $i \in \Z$ and  $j \in J_i$.
  Using $|\det A|^i \asymp |\det B|^j$ for $j \in J_i$ once again, it follows thus that
  \begin{equation}
    \label{eq:A-B-swap}
    |\det A|^{\alpha i} |(f \ast \varphi_i)(x)|
    \lesssim  \sum_{j \in J_i} |\det B|^{\alpha j} \DoubleStar[\psi]{j}f(x)
    \quad \text{for all} \quad x \in \R^d,
  \end{equation}
  for all $i \in \Z$.

  The remainder of the proof is split into three cases dealing with
  $p < \infty$, $p = \infty$ and $q < \infty$, and $p = q = \infty$ separately.
\\~\\
  \textbf{Case 1:} $p \in (0, \infty)$.
  We only prove this case for $q \in (0, \infty)$, since analogous arguments using suprema yield
  the case for $q = \infty$.
  Hence, for $q < \infty$, raising \eqref{eq:A-B-swap} to the $q$-th power
  and summing over $i \in \Z$ results in
  \begin{align*}
    \sum_{i \in \Z} ( |\det A|^{\alpha i} |(f \ast \varphi_i)(x)|)^q
    &\lesssim \sum_{i \in \Z}  \Big(\sum_{j \in J_i} |\det B|^{\alpha j}
    \DoubleStar[\psi]{j}f(x)\Big)^q \\
    &\lesssim \sum_{i \in \Z} \sum_{j \in J_i} (|\det B|^{\alpha j}
    \DoubleStar[\psi]{j}f(x))^q,
  \end{align*}
  where we used in the last step that $\sup_{i \in \Z}|J_i| < \infty$ by \Cref{lem:hom_covers}.
  Since \Cref{lem:hom_covers} also implies $\sup_{j \in \Z}|I_j| < \infty$ for
  $I_j:= \{ i \in \Z: (A^*)^iQ \cap (B^*)^jP \neq \emptyset \}$, it follows that
  \begin{align*}
    \sum_{i \in \Z} \sum_{j \in J_i} (|\det B|^{\alpha j}
    \DoubleStar[\psi]{j}f(x))^q
    &= \sum_{j \in \Z} \sum_{i \in I_j} (|\det B|^{\alpha j}
    \DoubleStar[\psi]{j}f(x))^q \\
    &\lesssim \sum_{j \in \Z} (|\det B|^{\alpha j}
    \DoubleStar[\psi]{j}f(x))^q.
  \end{align*}
  Consequently, we have
  \begin{equation*}
    \|f\|_{\TL(A)}
    = \bigg\|
        \Big(
          \sum_{i \in \Z}
            ( |\det A|^{\alpha i} \, |f \ast \varphi_i|)^q
        \Big)^{1/q}
      \bigg\|_{L^p}
    \lesssim \bigg\|
               \Big(
                 \sum_{j \in \Z}
                   (|\det B|^{\alpha j} \, \DoubleStar[\psi]{j} f)^q
                \Big)^{1/q}
             \bigg\|_{L^p}
    \asymp \|f\|_{\TL(B)},
  \end{equation*}
  where the last equivalence follows from Theorem~\ref{thm:maximal_characterizations}.
  Exchanging the roles of $A$ and $B$ yields the converse inequality and therefore
  $\TL(A) = \TL(B)$ in this case.
\\~\\
  \textbf{Case 2:} $p = \infty$, $q \in (0, \infty)$.
  Let $\ell \in \Z$ be arbitrary.
  Again, we raise \eqref{eq:A-B-swap} to the $q$-th power, sum over $i \geq -\ell$,
  and use the fact that $\sup_{i \in \Z}|J_i| < \infty$.
  This gives
  \begin{align*}
    \sum_{i = - \ell}^{\infty}
      ( |\det A|^{\alpha i} |(f \ast \varphi_i)(x)|)^q
    & \lesssim \sum_{i = - \ell}^{\infty}
               \bigg(
                 \sum_{j \in J_i}
                  |\det B|^{\alpha j} \, \DoubleStar[\psi]{j}f(x)
               \bigg)^q \\
    &\lesssim \sum_{i = - \ell}^{\infty} \,\,
                \sum_{j \in J_i}
                  (|\det B|^{\alpha j} \, \DoubleStar[\psi]{j}f(x))^q.
  \end{align*}
  \Cref{lem:IjJiInclusions} yields the existence of $N_1 \in \N$ such that
  $J_i \subseteq \{j \in \Z: |j - \lfloor c i \rfloor| \leq N_1  \}$ for all $i \in \Z$, where
  $c = c(A,B) := \ln |\det A| / \ln |\det B|$.
  Hence, $j \geq \lfloor - c \ell \rfloor - N_1$ for all $j \in \bigcup_{i = - \ell}^\infty J_i$.
  By setting $\ell_1:= \lfloor c \ell \rfloor + N_1 + 1 \geq -(\lfloor - c \ell \rfloor - N_1)$,
  we thus obtain
  \begin{align*}
    \sum_{i = - \ell}^{\infty} \,\,
      \sum_{j \in J_i}
        (|\det B|^{\alpha j} \, \DoubleStar[\psi]{j}f(x))^q
    & \leq \sum_{j = \lfloor- c \ell \rfloor -N_1}^{\infty} \,\,
             \sum_{i \in I_j}
               (|\det B|^{\alpha j} \, \DoubleStar[\psi]{j}f(x))^q \\
    &\lesssim \sum_{j = - \ell_1}^{\infty} (|\det B|^{\alpha j}
    \DoubleStar[\psi]{j}f(x))^q,
  \end{align*}
  where in the last step we used that $\sup_{j \in \Z}|I_j| < \infty$ for
  $I_j:= \{ i \in \Z: (A^*)^iQ \cap (B^*)^jP \neq \emptyset \}$.
  In combination, the above two estimates show that, for any $\ell \in \Z$,
  \begin{equation}
    \label{eq:case2-part1}
     \sum_{i = - \ell}^{\infty} ( |\det A|^{\alpha i} |(f \ast \varphi_i)(x)|)^q
    \lesssim \sum_{j = - \ell_1}^{\infty} (|\det B|^{\alpha j}
    \DoubleStar[\psi]{j}f(x))^q \quad \text{for all} \quad x \in \R^d.
  \end{equation}

  Let $\Omega_A, \Omega_B \subseteq \R^d$ be the fixed ellipsoids used
  in the definition of $\rho_A$ resp.\ $\rho_B$ (cf.\ Section~\ref{sec:expansive}).
  Then $A^\ell \Omega_A = \{ x \in \R^d : \rho_A(x) < |\det A|^\ell\}$,
  and thus any $x \in A^\ell \Omega_A$ satisfies
  \begin{equation*}
    \rho_B(x) \leq C \rho_A(x) < C |\det A|^\ell = C |\det B|^{c \ell}
    \leq |\det B|^{\lfloor c \ell \rfloor +N_2}
  \end{equation*}
  with
  \(
    N_2
    := \max \big\{ 1, \, \lceil \ln C / \ln |\det B| \rceil \big\} + N_1
    \geq \lceil \ln C / \ln |\det B| \rceil + 1
    .
  \)
  Consequently, we have for all $\ell \in \Z$ the inclusion
  \begin{equation}\label{eq:case2-part2}
    A^\ell \Omega_A \subseteq B^{\lfloor c \ell \rfloor + N_2} \Omega_B = B^{\ell_2} \Omega_B,
    \qquad \text{where} \qquad
    \ell_2 := \lfloor c \ell \rfloor + N_2
    .
  \end{equation}
  Now let $w \in \R^d$ also be arbitrary.
  Then \eqref{eq:case2-part1} and \eqref{eq:case2-part2} yield
  \begin{align*}
    &\frac{1}{|\det A|^\ell}\int_{A^\ell \Omega_A +w}
    \sum_{i = - \ell}^{\infty} ( |\det A|^{\alpha i} |(f \ast \varphi_i)(x)|)^q \; dx \\
    & \quad \quad \quad \quad
      \lesssim
      \frac{1}{|\det A|^\ell}
      \int_{B^{\ell_2} \Omega_B + w}
        \sum_{j = - \ell_1}^{\infty}
          (|\det B|^{\alpha j} \DoubleStar[\psi]{j}f(x))^q
    \;  dx.
  \end{align*}
  Note that $N_1 + 1 \leq N_2$ and hence $\ell_1 \leq \ell_2$.
  Therefore, we obtain
  \begin{align*}
    & \frac{1}{|\det A|^\ell}
      \int_{A^\ell \Omega_A +w}
        \sum_{i = - \ell}^{\infty}
          ( |\det A|^{\alpha i} \, |(f \ast \varphi_i)(x)|)^q
     \; dx \\
    & \quad \quad \quad \quad
      \lesssim \frac{1}{|\det A|^\ell}
               \int_{B^{\ell_2} \Omega_B + w}
                 \sum_{j = - \ell_2}^{\infty}
                   (|\det B|^{\alpha j} \, \DoubleStar[\psi]{j}f(x))^q
              \; dx \\
    &  \quad \quad \quad \quad
       \lesssim \frac{1}{|\det B|^{\ell_2}}
                \int_{B^{\ell_2} \Omega_B + w}
                  \sum_{j = - \ell_2}^{\infty}
                    (|\det B|^{\alpha j} \, \DoubleStar[\psi]{j}f(x))^q
               \; d x,
      \numberthis \label{eq:case2-part3}
  \end{align*}
  where we used in the last step that
  \(
    |\det A|^{\ell}
    = |\det B|^{c\ell}
    \geq |\det B|^{\lfloor c \ell \rfloor}
    \gtrsim |\det B|^{\ell_2}.
  \)
  Taking the $q$-th root and the supremum over $\ell_2, \ell \in \Z$ and $w \in \R^d$ yields
  \begin{equation*}
    \|f\|_{\TLi(A)}
    \lesssim \sup_{\ell_2 \in \Z, w \in \R^d}
             \bigg(
               \frac{1}{|\det B|^{\ell_2}}
               \int_{B^{\ell_2} \Omega_B + w}
                 \sum_{j = - \ell_2}^{\infty}
                   (|\det B|^{\alpha j} \, \DoubleStar[\psi]{j}f(x))^q
              \; d x
             \bigg)^{1/q}
    \asymp \|f\|_{\TLi(B)},
  \end{equation*}
  where the last equivalence follows again from the maximal
  characterizations of Theorem~\ref{thm:maximal_characterizations}.
  Exchanging the roles of $A$ and $B$ yield the converse norm
  estimate, and therefore it yields that $\TLi(A) = \TLi(B)$.
\\~\\
  \textbf{Case 3:} $p = q= \infty$.  By \Cref{eq:A-B-swap}, it follows that
  \begin{align*}
   \| | \det A|^{\alpha i} (f \ast \varphi_i) \|_{L^{\infty}}
   \lesssim \sum_{j \in J_i} \| |\det B|^{\alpha j} \psi^{**}_{j, \beta} f \|_{L^{\infty}}
   \leq \sum_{j \in J_i} \| |\det B|^{\alpha j} (f \ast \psi_j) \|_{L^{\infty}}
  \end{align*}
for $i \in \Z$. Combining this with \Cref{eq:Bii} yields
\begin{align*}
 \| f \|_{\TLii(A; \varphi)} &\asymp \sup_{i \in \Z} |\det A|^{\alpha i} \| f \ast \varphi_i \|_{L^{\infty}} \\
 &\lesssim \sup_{i \in \Z} \sum_{j \in J_i} |\det B|^{\alpha j} \|  f \ast \psi_j \|_{L^{\infty}} \\
 &\lesssim \sup_{i \in \Z} \sup_{j \in J_i} |\det B|^{\alpha j} \|  f \ast \psi_j \|_{L^{\infty}} \\
 &\leq \sup_{j \in \Z}  |\det B|^{\alpha j} \|  f \ast \psi_j \|_{L^{\infty}} \\
 &\asymp \| f \|_{\TLii(B; \psi)},
\end{align*}
where it is used that $\sup_{i \in \Z} |J_i| + \sup_{j \in \Z} |I_j| < \infty$ by \Cref{lem:hom_covers}.
Exchanging the role of $A$ and $B$ yields $\| \cdot \|_{\TLii(A)} \asymp \| \cdot \|_{\TLii(B)}$, and completes the proof.
\end{proof}

\appendix

\section{Miscellaneous results}

This section contains two results used in the proofs of the main theorems.
The most important such result
is the following convolution relation, which is \cite[Proposition in Section~1.5.1]{triebel2010theory}
with the implied constant written out explicitly.
A proof can also be found in \cite[Theorem~3.4]{VoigtlaenderEmbeddingsOfDecompositionSpaces}.

\begin{proposition}[\cite{triebel2010theory}]\label{prop:GeneralConvolutionRelation}
Let $K_1, K_2 \subseteq \mathbb{R}^d$ be compact and $p \in (0,1]$.
If $f, \psi \in \Schwartz(\R^d)$ satisfy $\supp \widehat{\psi} \subseteq K_1$
and $\supp \widehat{f} \subseteq K_2$, then the following quasi-norm estimate holds:
\[
  \| f \ast \psi \|_{L^p}
  \leq [\Lebesgue{K_1 - K_2}]^{\frac{1}{p} -1}  \| f \|_{L^p}  \| \psi \|_{L^p}
  ,
\]
where $K_1 - K_2 := \{u - v : u \in K_1, v \in K_2 \}$.
\end{proposition}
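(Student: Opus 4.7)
The plan is to reduce the bound to the classical Young's inequality in $L^1$ via a pointwise Plancherel--Polya estimate that exploits the compact Fourier support of $f$ and $\psi$. The key observation is that, for fixed $x \in \R^d$, the auxiliary function $h_x(y) := f(x-y) \, \psi(y)$ satisfies $(f \ast \psi)(x) = \int_{\R^d} h_x(y) \, dy$ and has Fourier transform supported in the compact set $K_1 - K_2$. Indeed, $h_x$ is the pointwise product of $y \mapsto f(x-y)$ and $\psi(y)$, and a direct computation gives $\Fourier[y \mapsto f(x-y)](\xi) = e^{-2\pi i \, x \cdot \xi} \, \widehat{f}(-\xi)$, whose support lies in $-K_2$. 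Since $\widehat{\psi}$ is supported in $K_1$, the convolution formula for Fourier transforms of products yields $\supp \widehat{h_x} \subseteq (-K_2) + K_1 = K_1 - K_2$.

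Next, I would apply a Plancherel--Polya--Nikol'skii-type inequality: for every $g \in \Schwartz(\R^d)$ with $\supp \widehat{g}$ contained in a compact set $K \subseteq \R^d$ and every $p \in (0, 1]$,
\[
  \|g\|_{L^1} \leq [\Lebesgue{K}]^{1/p - 1} \, \|g\|_{L^p}.
\]
Applied to $g = h_x$ with $K = K_1 - K_2$, and using $|(f \ast \psi)(x)| = |\int h_x(y)\,dy| \leq \|h_x\|_{L^1}$, this gives the pointwise bound
\[
  |(f \ast \psi)(x)| \leq [\Lebesgue{K_1 - K_2}]^{1/p - 1} \bigl( (|f|^p \ast |\psi|^p)(x) \bigr)^{1/p},
\]
since $\|h_x\|_{L^p}^p = \int_{\R^d} |f(x-y)|^p \, |\psi(y)|^p \, dy = (|f|^p \ast |\psi|^p)(x)$. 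Raising to the $p$-th power, integrating in $x$, and applying the $L^1$ Young inequality $\bigl\| |f|^p \ast |\psi|^p \bigr\|_{L^1} \leq \|f\|_{L^p}^p \, \|\psi\|_{L^p}^p$ yields
\[
  \|f \ast \psi\|_{L^p}^p \leq [\Lebesgue{K_1 - K_2}]^{1-p} \, \|f\|_{L^p}^p \, \|\psi\|_{L^p}^p,
\]
and taking $p$-th roots gives the claim.

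The main obstacle is establishing the Plancherel--Polya--Nikol'skii inequality above with the sharp constant $[\Lebesgue{K}]^{1/p - 1}$, i.e., with no additional dimension- or $p$-dependent prefactor. This is the core technical step, since the remainder is a straightforward integration followed by Young's inequality. A standard approach is to construct a band-limited reproducing kernel for functions with Fourier support in $K$ whose $L^1$-norm can be explicitly controlled in terms of $\Lebesgue{K}$, along the lines carried out in the detailed proof of \cite[Theorem~3.4]{VoigtlaenderEmbeddingsOfDecompositionSpaces}.
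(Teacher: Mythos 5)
Your structural reduction is correct and is essentially the standard route (the one in Triebel and in the cited proof of Voigtlaender): for fixed $x$, set $h_x(y) = f(x-y)\psi(y)$, observe that $\widehat{h_x}$ is supported in $K_1 - K_2$, bound $|(f\ast\psi)(x)| \le \|h_x\|_{L^1}$ by a Nikol'skii-type inequality in terms of $\|h_x\|_{L^p}$, raise to the $p$-th power and integrate via Fubini. The Fourier support computation and the identification $\|h_x\|_{L^p}^p = (|f|^p \ast |\psi|^p)(x)$ are both right, and the final integration step is an equality by Fubini (you do not even need to invoke Young's inequality).

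The one place where you flag a potential obstacle --- obtaining the Nikol'skii inequality $\|g\|_{L^1} \le [\Lebesgue{K}]^{1/p-1}\|g\|_{L^p}$ with \emph{exactly} that constant --- does not in fact require a reproducing-kernel construction, and a reproducing-kernel argument would typically \emph{not} give a constant depending only on $\Lebesgue{K}$ (the $L^1$-norm of the kernel introduces extra factors). Instead, the sharp constant follows in three elementary steps for band-limited $g$ with $\supp\widehat{g} \subseteq K$:
\begin{align*}
  \|g\|_{L^\infty}
  &\le \|\widehat{g}\,\|_{L^1}
   \le [\Lebesgue{K}]^{1/2} \, \|\widehat{g}\,\|_{L^2}
   = [\Lebesgue{K}]^{1/2} \, \|g\|_{L^2}
   \qquad \text{(Cauchy--Schwarz and Plancherel),}\\
  \|g\|_{L^2}^2
  &\le \|g\|_{L^\infty}^{2-p} \, \|g\|_{L^p}^{p}
   \qquad \text{(since } |g|^2 = |g|^{2-p}|g|^p\text{),}
\end{align*}
which combined give $\|g\|_{L^\infty} \le [\Lebesgue{K}]^{1/p}\|g\|_{L^p}$, and then $\|g\|_{L^1} \le \|g\|_{L^\infty}^{1-p}\|g\|_{L^p}^p \le [\Lebesgue{K}]^{1/p-1}\|g\|_{L^p}$. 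Substituting this into your argument closes the gap and yields the proposition with precisely the stated constant.
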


\begin{corollary}\label{cor:ConvenientConvolutionRelation}
  Let $A \in \GL(d, \R)$ be expansive, let $K \subseteq \R^d$ be compact,
  and let $N \in \N$ and $p \in (0,1)$.
  Then there exists a constant $C = C(A,K,N,p) > 0$ with the following property:

  If $f,g \in \Schwartz(\R^d)$
  satisfy $\supp \widehat{f}, \supp \widehat{g} \subseteq \bigcup_{\ell=-N}^N (A^\ast)^{i + \ell} K$
  for some $i \in \mathbb{Z}$,
  then
  \[
    \| f \ast g \|_{L^p}
    \leq C  |\det A|^{i  (\frac{1}{p} - 1)}  \| f \|_{L^p}  \| g \|_{L^p}
    .
  \]
\end{corollary}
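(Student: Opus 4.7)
The strategy is a direct reduction to Proposition \ref{prop:GeneralConvolutionRelation}. Under the hypotheses of the corollary, both $\widehat{f}$ and $\widehat{g}$ are supported in
\[
  K_i := \bigcup_{\ell=-N}^{N} (A^\ast)^{i+\ell} K,
\]
so I plan to apply Proposition \ref{prop:GeneralConvolutionRelation} with $K_1 = K_2 = K_i$. The entire task then reduces to controlling $\Lebesgue{K_i - K_i}$ by a constant multiple of $|\det A|^i$, where the constant depends only on $A$, $K$, $N$, $p$.

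The key observation is that $(A^\ast)^{i+\ell} K = (A^\ast)^i \bigl[ (A^\ast)^\ell K \bigr]$, so we can factor out $(A^\ast)^i$ from the union. Setting
\[
  L := \bigcup_{\ell = -N}^{N} (A^\ast)^\ell K,
\]
which is a compact subset of $\R^d$ depending only on $A$, $K$, $N$, we obtain $K_i \subseteq (A^\ast)^i L$. Since the difference set of a union is contained in the union of differences, and by linearity of $(A^\ast)^i$,
\[
  K_i - K_i \subseteq (A^\ast)^i L - (A^\ast)^i L = (A^\ast)^i (L - L).
\]
Taking Lebesgue measure and using $|\det (A^\ast)^i| = |\det A|^i$ gives
\[
  \Lebesgue{K_i - K_i} \leq |\det A|^i \, \Lebesgue{L - L}.
\]
Note that $L - L$ is compact (being the continuous image of the compact set $L \times L$ under subtraction), hence has finite Lebesgue measure, and this measure depends only on $A$, $K$, $N$.

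Plugging this bound into Proposition \ref{prop:GeneralConvolutionRelation} with $1/p - 1 > 0$ yields
\[
  \| f \ast g \|_{L^p}
  \leq \bigl[\Lebesgue{K_i - K_i}\bigr]^{1/p - 1} \| f \|_{L^p} \| g \|_{L^p}
  \leq \bigl[\Lebesgue{L - L}\bigr]^{1/p - 1} \, |\det A|^{i (1/p - 1)} \, \| f \|_{L^p} \| g \|_{L^p},
\]
which is the desired estimate with $C := [\Lebesgue{L - L}]^{1/p - 1}$. There is no genuine obstacle here; the only conceptual point is recognizing that factoring out $(A^\ast)^i$ from the union of dilated copies of $K$ separates the $i$-dependence cleanly, so that the constant indeed depends only on $A$, $K$, $N$, $p$ and not on $i$.
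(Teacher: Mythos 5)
Your proof is correct and takes essentially the same approach as the paper: both factor out $(A^\ast)^i$ to isolate the $i$-dependence and then bound the Lebesgue measure of the resulting difference set by a constant depending only on $A$, $K$, $N$. The only cosmetic difference is that the paper first enlarges $\bigcup_{\ell=-N}^N (A^\ast)^\ell K$ to a closed ball $\overline{B}_R(0)$ before applying \Cref{prop:GeneralConvolutionRelation}, whereas you work directly with $L = \bigcup_{\ell=-N}^N (A^\ast)^\ell K$; both routes yield the same estimate.
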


\begin{proof}
  By compactness of $K \subseteq \R^d$, there exists $R = R(A,K,N) > 0$ such that
  \[ \bigcup_{\ell=-N}^N (A^\ast)^{\ell} K \subseteq \overline{B}_R(0).\]
  Setting
  $K_1 := K_2 := (A^\ast)^i \overline{B}_R (0)$, it follows that
  $\supp \widehat{f} \subseteq K_1$, $\supp \widehat{g} \subseteq K_2$, and
  \[
    \Lebesgue{K_1 - K_2}
    \leq \Lebesgue{(A^\ast)^i \overline{B}_{2R}(0)}
    =    |\det A|^i \cdot \Lebesgue{\overline{B}_{2R}(0)}
    .
  \]
  Hence, an application of \Cref{prop:GeneralConvolutionRelation} easily
  yields the claim.
\end{proof}

The second result is the following technical estimate.

\begin{lemma}\label{lem:ImprovedConvolutionBound}
  Let $A \in \GL(d,\R)$ be expansive, let $M > 0$, $N \in \N$, and
  $Q \subseteq \R^d$ be bounded.
  Further, let $\varphi,\phi$ as in \Cref{sub:NecessityProofNotation}.
  Then there exists a constant $C = C(d,M,N,Q,\phi,\varphi,A) > 0$ with the following property:

  If $i,\ell \in \Z$ and $\delta > 0$ are such that $|i - \ell| \leq N$
  and $B_\delta (\eta) \subseteq (A^\ast)^\ell Q$ for some
  $\eta \in \R^d$, then
  \[
    \big( |\phi_\delta| \ast |\varphi_i| \big) (x)
    \leq C  \delta^d  (1 + |\delta x|)^{-M}
  \]
  holds for all $x \in \R^d$.
\end{lemma}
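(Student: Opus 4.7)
My plan is to reduce the claim to a routine Schwartz-class estimate, once one has established the key scale relation
\[
  \delta \, \|A^{-i}\| \leq C_0
\]
for some constant $C_0 = C_0(A, N, Q)$. From this scale bound, the decay $(1+|\delta x|)^{-M}$ will follow by combining the Schwartz decay of $\phi$ with a Peetre-type inequality, while the factor $\delta^d$ will appear automatically from the $L^1$-normalization of $\phi_\delta$.

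To prove the scale relation, I would exploit the hypothesis $B_\delta(\eta) \subseteq (A^\ast)^\ell Q$. Applying $(A^\ast)^{-\ell}$ yields $(A^\ast)^{-\ell} B_\delta(\eta) \subseteq Q$, and the set $(A^\ast)^{-\ell} B_\delta(\eta)$ is an affine ellipsoid whose longest semi-axis has length $\delta \, \|(A^\ast)^{-\ell}\| = \delta \, \|A^{-\ell}\|$. Since this ellipsoid lies inside the bounded set $Q$, its diameter is at most $\diam(Q)$, which gives $\delta \, \|A^{-\ell}\| \leq \diam(Q)$. Writing $A^{-i} = A^{-\ell} A^{\ell-i}$ and using $|i-\ell| \leq N$ to absorb $\|A^{\ell-i}\|$ into the constant then yields $\delta \, \|A^{-i}\| \leq C_0$.

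With the scale estimate in hand, the remainder is direct. Since $\phi \in \Schwartz(\R^d)$, for the given $M$ one has $|\phi(w)| \leq C_M (1+|w|)^{-M}$, so
\[
  |\phi_\delta(x-y)| \leq C_M \, \delta^d \, (1+\delta|x-y|)^{-M}.
\]
Combining this with the elementary inequality $1+\delta|x| \leq (1+\delta|x-y|)(1+\delta|y|)$ gives
\[
  |\phi_\delta(x-y)| \leq C_M \, \delta^d \, (1+\delta|x|)^{-M} \, (1+\delta|y|)^M.
\]
Substituting into the convolution, pulling the factor $(1+\delta|x|)^{-M}$ outside, and making the change of variables $z = A^i y$ in the remaining integral reduces the problem to controlling
\[
  \int_{\R^d} (1+\delta \, |A^{-i} z|)^M \, |\varphi(z)| \, dz
  \leq \int_{\R^d} (1+C_0 |z|)^M \, |\varphi(z)| \, dz,
\]
which is finite since $\varphi$ is Schwartz, with bound depending only on $M$, $C_0$, and $\varphi$. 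Multiplying everything together yields the desired estimate.

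The main obstacle is the scale estimate $\delta \, \|A^{-i}\| \lesssim 1$. A naive diameter bound would only give $\delta \lesssim \|(A^\ast)^\ell\| = \|A^\ell\|$, which is insufficient, since the product $\|A^\ell\| \, \|A^{-i}\|$ can grow with $\ell$ when $A$ is strongly anisotropic. The sharper bound relies on observing that a \emph{round} ball of radius $\delta$ fits inside the ellipsoid $(A^\ast)^\ell Q$ only when $\delta$ is constrained by the smallest axis of that ellipsoid, which upon pulling back to $Q$ becomes the crucial inequality $\delta \, \|A^{-\ell}\| \lesssim 1$.
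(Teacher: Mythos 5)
Your argument is correct and follows essentially the same route as the paper's proof: establish the scale bound $\delta\,\|A^{-i}\| \lesssim 1$ from the hypothesis $B_\delta(\eta) \subseteq (A^\ast)^\ell Q$ and the constraint $|i-\ell|\leq N$, then combine the Schwartz decay of $\phi$ with a Peetre-type inequality to extract the factor $\delta^d (1+|\delta x|)^{-M}$ from the convolution. The only cosmetic differences are that you phrase the scale bound via the largest semi-axis of the ellipsoid $(A^\ast)^{-\ell}B_\delta(\eta)$ rather than via difference sets, and you bound the remaining integral by a change of variables $z = A^i y$ instead of invoking an explicit polynomial decay rate $(1+|z|)^{-(M+d+1)}$ for $\varphi$; both are equivalent and standard.
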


\begin{proof}
  Let $R = R(Q) > 0$ be such that $Q \subseteq B_R (0)$.
  Then
  \[
    B_\delta (0)
    \subseteq B_\delta (\eta) - B_\delta (\eta)
    \subseteq (A^\ast)^\ell Q - (A^\ast)^\ell Q
    \subseteq (A^\ast)^\ell B_{2 R} (0),
  \]
  and thus $(A^\ast)^{-\ell} B_1(0) \subseteq B_{2 R / \delta} (0)$,
  so that $\| (A^\ast)^{-\ell} \| \leq 2 R / \delta$.
  Therefore,
  \[
    \| A^{-i} \|
    = \| (A^\ast)^{-i} \|
    = \| (A^\ast)^{-\ell} (A^\ast)^{\ell - i} \|
    \lesssim_{A,N} \| (A^\ast)^{-\ell} \|
    \leq 2 R / \delta
    ,
  \]
  where it is used that $|i - \ell| \leq N$.
  Thus, given any $y \in \R^d$, it follows that $|\delta y| \leq C_1 \, |A^i y|$
  for a certain constant $C_1 =  C_1(A,N,Q) \geq 1$.
  This implies, for arbitrary $x,y \in \R^d$, that
  \begin{align*}
    1 + |\delta x|
    \leq (1 + |\delta  (x-y)|)  (C_1 + |\delta y|)
    \leq C_1  (1 + |\delta  (x-y)|)  (1 + |A^i y|).
  \end{align*}
  By rearranging, this shows
  $(1 + |\delta  (x-y)|)^{-M} \leq C_1^M  (1 + |\delta x|)^{-M}  (1 + |A^i y|)^M$
  for all $ x, y \in \R^d$.

  Next, since $\phi,\varphi \in \Schwartz(\R^d)$, there exists
  $C_2 = C_2(\phi,\varphi,M,d) > 0$ such that
  \[
    |\phi(x)| \leq C_2  (1 + |x|)^{-M}
    \qquad \text{and} \qquad
    |\varphi(x)| \leq C_2  (1 + |x|)^{-(M + d + 1)}
  \]
  for all $x \in \mathbb{R}^d$.
  Hence,
  \begin{align*}
    \big( |\phi_\delta| \ast |\varphi_i| \big) (x)
    & \leq \delta^d \, |\det A|^i
           \int_{\R^d}
             |\phi(\delta  (x-y))|
              |\varphi(A^i y)|
           \, d y \\
    & \leq C_2^2 \, \delta^d \, |\det A^i|
           \int_{\R^d}
             (1 + |\delta  (x-y)|)^{-M}
              (1 + |A^i y|)^{-(M + d + 1)}
           \, d y \\
    & \leq C_1^M C_2^2 \,
           \delta^d \,
           (1 + |\delta x|)^{-M}
           \int_{\R^d}
             |\det A^i|
              (1 + |A^i y|)^{-(d+1)}
           \, d y \\
    & =    C_1^M \, C_2^2 \, \delta^d
            (1 + |\delta x|)^{-M} \,
           \int_{\R^d} (1 + |z|)^{-(d+1)} \, d z
    .
  \end{align*}
  This easily implies the claim of the lemma.
\end{proof}

\section{Equivalent norm for \texorpdfstring{$\dot{\mathbf{F}}^{0}_{1,\infty}(A)$}{the Triebel-Lizorkin space with p = 1, q = ∞, and α = 0}.}

This section provides a dual characterization for the norm of
$\dot{\mathbf{F}}^{0}_{1,\infty}(A)$, which is used in the proof of
\Cref{lem:PInftyMainArgument}.
Its proof hinges on associated Triebel-Lizorkin sequence spaces
for which we recall the basic objects first.

Let $A \in \mathrm{GL}(d, \mathbb{R})$ be an expansive matrix and let
$\mathcal{D}_A$ be the collection of all \emph{dilated cubes}
\[
 \mathcal{D}_A
 = \big\{ D = A^i ([0,1]^d + k) : i \in \mathbb{Z}, k \in \mathbb{Z}^d \big\}
\]
associated to $A$.
The \emph{scale} of a dilated cube $D = A^i ([0,1]^d + k) \in \mathcal{D}_A$
is defined as $\scale (D) = i$; alternatively, $\scale(D) = \log_{|\det A|} \Measure(D)$.
The \emph{tent} over $D \in \mathcal{D}_A$ is defined as
\[
  \mathcal{T}(D)
  := \big\{
       D' \in \mathcal{D}_A
       :
       \Measure (D' \cap D) > 0
       \quad \text{and} \quad
       \scale(D') \leq \scale(D)
    \big\}.
\]
The following lemma provides a convenient cover for the union of elements of a tent
and will be used in two proofs below.

\begin{lemma}\label{lem:tent-superset}
  There exists $N = N(A,d) \in \N$ such that for all $D \in \mathcal{D}_A$, we have
  \[
    \bigcup_{D' \in \mathcal{T}(D)} D' \subseteq
    \bigcup_{\substack{n \in \Z^d \\ |n| \leq N}} (D + A^{\scale(D)}n).
  \]
\end{lemma}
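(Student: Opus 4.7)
The plan is to exploit the fact that, because $A$ is expansive with respect to the fixed ellipsoid $\Omega_A \subseteq A\Omega_A$, the tent cubes $D' \in \mathcal{T}(D)$ cannot be much larger than $D$ itself in any intrinsic sense, and hence cannot extend far from $D$ relative to the translate lattice $A^{\scale(D)} \Z^d$.

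First, I would fix a constant $R_0 = R_0(A,d) > 0$ such that $[0,1]^d \subseteq R_0 \, \Omega_A$; this is possible because $\Omega_A$ is an ellipsoid and hence contains a neighborhood of the origin. The key geometric input is the iterated chain $\Omega_A \subseteq A \Omega_A \subseteq A^2 \Omega_A \subseteq \cdots$ coming from \eqref{eq:expansive_ellipsoid}, which gives $A^j \Omega_A \subseteq A^i \Omega_A$ whenever $j \leq i$. Writing $D = A^i([0,1]^d + k)$ and $D' = A^j([0,1]^d + k') \in \mathcal{T}(D)$ with $j \leq i$, and picking any point $x \in D' \cap D$, every $z \in D'$ satisfies $z - x \in A^j([0,1]^d - [0,1]^d) = A^j [-1,1]^d$. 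Since $[-1,1]^d \subseteq 2R_0 \, \Omega_A$ by symmetry of $\Omega_A$, this yields
\[
  D' \subseteq x + A^j [-1,1]^d \subseteq x + 2R_0 \, A^j \Omega_A \subseteq x + 2R_0 \, A^i \Omega_A .
\]

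Using $x \in D = A^i([0,1]^d + k)$, one then obtains $D' \subseteq A^i \bigl( [0,1]^d + k + 2R_0 \, \Omega_A \bigr)$. The bounded set $2R_0 \, \Omega_A$ is contained in $[-M,M]^d$ for some $M = M(A,d)$, so $[0,1]^d + 2R_0 \, \Omega_A$ is covered by finitely many unit-cube translates $[0,1]^d + n$ with $n \in \Z^d$ and $|n| \leq N$ for a suitable $N = N(A,d)$. Translating by $k$ and applying $A^i$ then gives
\[
  D' \subseteq \bigcup_{|n| \leq N} A^i \bigl( [0,1]^d + k + n \bigr)
  = \bigcup_{|n| \leq N} \bigl( D + A^{\scale(D)} n \bigr),
\]
which is the desired inclusion.

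The only point requiring mild care is making the constant $N$ independent of $D$ (i.e., of $i, j, k, k'$): this follows because $R_0$ and $M$ depend only on $A$ and $d$, and the passage from $A^j \Omega_A$ to $A^i \Omega_A$ uses only $j \leq i$ and the nested chain from \eqref{eq:expansive_ellipsoid}. I do not anticipate a serious obstacle; the argument is essentially a diameter estimate in the anisotropic geometry of $A$, and the expansive ellipsoid does all the work.
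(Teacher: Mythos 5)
Your proof is correct and rests on the same geometric idea as the paper's, namely that any $D' \in \mathcal{T}(D)$ cannot protrude far from $D$ in the anisotropic geometry, so a finite lattice of translates of $D$ suffices. The technical execution differs mildly: the paper first proves the statement for the normalized cube $D = [0,1]^d$ by bounding $\diam(D')$ via the eigenvalue estimate $|A^j x| \leq C\lambda_-^j|x|$ for $j \leq 0$, then transfers to general $D = A^i([0,1]^d + \ell)$ by a scaling/translation argument showing $D' \in \mathcal{T}(D)$ iff $A^{-i}D' \in \mathcal{T}([0,1]^d+\ell)$; you work directly with a general $D$ and replace the eigenvalue estimate by the nested ellipsoid chain $A^j\Omega_A \subseteq A^i\Omega_A$ for $j \leq i$, derived from \eqref{eq:expansive_ellipsoid}, to contain $D'$ in $x + 2R_0 A^{\scale(D)}\Omega_A$ with $x \in D' \cap D$. (The containment $[-1,1]^d \subseteq 2R_0\Omega_A$ is justified by convexity and central symmetry of the ellipsoid.) Your route is slightly more self-contained and avoids the explicit normalization step; otherwise the two arguments are equivalent.
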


\begin{proof}
First, let $D' = A^i([0,1]^d+k) \in \mathcal{D}_A$ with $i \leq 0$.
Then
\begin{align}\label{eq:diameterD'}
  \diam (D')
  := \max_{z_1, z_2 \in D'}
       |z_1 - z_2|
   = \max_{x_1, x_2 \in [0,1]^d}
       |A^i(x_1 - x_2)|
  \leq C \lambda_-^i \sqrt{d},
\end{align}
where the inequality used that $| A^i x | \leq C \lambda_-^i | x|$ for
all $x \in \mathbb{R}^d$, see, e.g., \cite[Equations (2.1) and (2.2)]{bownik2003anisotropic}.
Since $\lambda_- > 1$, we can choose $R > 0$ such that $R > C \lambda_-^i \sqrt{d}$
for all $i \leq 0$.
Then, for arbitrary $D' \in \mathcal{T}([0,1]^d)$, it follows that
$D' \cap [0,1]^d \neq \emptyset$, and hence $\dist (x, [0,1]^d) < R$
for all $x \in D'$, so that $D' \subseteq [0,1]^d + B_R (0)$.
Therefore,
\begin{equation}\label{eq:tent-superset-cube}
  \bigcup_{D' \in \mathcal{T}([0,1]^d)} D'
  \subseteq [0,1]^d + B_R (0)
  \subseteq \bigcup_{\substack{n \in \Z^d \\ |n| \leq N}} ([0,1]^d + n)
\end{equation}
for some $N = N(A,d) > 0$.

Second, if $D' = A^i ([0,1]^d + k) \in \mathcal{T}([0,1]^d + \ell)$
for some $\ell \in \mathbb{Z}^d$, then
$D' \cap ([0,1]^d + \ell) \neq \emptyset$ implies that
$\dist(x, [0,1]^d + \ell) \leq \diam(D') < R$ for all $x \in D'$ by
the arguments following \eqref{eq:diameterD'}.
Therefore, by \Cref{eq:tent-superset-cube},
\begin{align}\label{eq:tent-superset-cube2}
  \bigcup_{D' \in \mathcal{T}([0,1]^d + \ell)} D'
  \subseteq [0,1]^d + B_R (0) + \ell
  \subseteq \bigcup_{\substack{n \in \Z^d \\ |n| \leq N}} ([0,1]^d + \ell + n).
\end{align}

At last, let $D = A^j ([0,1]^d + \ell) \in \mathcal{D}_A$ be arbitrary.
Then $D' = A^i([0,1]^d + k) \in \mathcal{T}(D)$ means
$\Measure (D' \cap D) > 0$ and $i \leq j$ by definition of $\mathcal{T}(D)$.
This is clearly equivalent to
\[
  |\det A|^j \Measure \bigl( A^{i - j} ([0,1]^d + k) \cap [0,1^d] + \ell \bigr)
  = \Measure\bigl( A^j (A^{i-j} [0,1]^d + k) \cap A^j([0,1]^d + \ell)\bigr)
  > 0
\]
and $i - j \leq 0$.
Thus, $D' = A^i ([0,1]^d + k) \in \mathcal{T}(D)$ if and only if
\[
  A^{-j} D'
  = A^{i-j} ([0,1]^d + k)
  \in \mathcal{T}([0,1]^d + \ell)
  .
\]
Using \Cref{eq:tent-superset-cube2}, it follows therefore that
\[
  \bigcup_{D' \in \mathcal{T}(D)} D'
  \subseteq A^j
            \bigg(
              \bigcup_{\substack{n \in \Z^d \\ |n| \leq N}}
                ([0,1]^d + \ell + n)
            \bigg)
  = \bigcup_{\substack{n \in \Z^d \\ |n| \leq N}}
      (D + A^j n),
\]
as required.
\end{proof}

The Triebel-Lizorkin sequence spaces $\dot{\mathbf{f}}^0_{1, \infty} (A)$ and
$\dot{\mathbf{f}}^0_{\infty, 1}(A)$ are defined as the collections of
all complex-valued sequences $c = (c_D)_{D \in \mathcal{D}_A}$ satisfying
\[
  \| c \|_{\dot{\mathbf{f}}^0_{1, \infty} (A)}
  := \int_{\mathbb{R}^d}
       \sup_{D \in \mathcal{D}_A}
         \Measure(D)^{-1/2} |c_D| \mathds{1}_D (x)
     \; dx
   < \infty
\]
and
\begin{align} \label{eq:def_fi1}
  \| c \|_{\dot{\mathbf{f}}^0_{\infty, 1}(A)}
  := \sup_{D' \in \mathcal{D}_A}
       \frac{1}{\Measure(D')}
       \int_{D'}
         \sum_{\substack { D \in \mathcal{D}_A \\ \scale(D) \leq \scale(D')}}
           \Measure(D)^{-1/2} |c_D| \mathds{1}_D (x)
       \; dx
   < \infty,
\end{align}
respectively.

The following simple characterization of $\mathbf{f}^0_{\infty, 1} (A)$ will be used below.
This equivalence is already claimed in \cite[Remark 3.5]{bownik2007anisotropic}, but a
short proof is included for the sake of completeness.

\begin{lemma}\label{lem:fi1_char_tent}
  For all complex-valued sequences $c = (c_D)_{D \in \mathcal{D}_A}$,
  \begin{align} \label{eq:fi1_char_tent}
   \| c \|_{\dot{\mathbf{f}}^0_{\infty, 1}(A)} \asymp
   \sup_{D' \in \mathcal{D}_A}
     \frac{1}{\Measure(D')}
     \sum_{D \in \mathcal{T}(D')}
       \Measure(D)^{1/2} |c_D|,
  \end{align}
  where $\mathcal{T}(D')$ denotes the tent over $D' \in \mathcal{D}_A$.
\end{lemma}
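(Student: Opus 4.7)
The plan is to prove the two inequalities of \eqref{eq:fi1_char_tent} separately, both starting from the observation that for any $D' \in \CalD_A$, Fubini gives
\[
 \int_{D'} \sum_{\substack{D \in \CalD_A \\ \scale(D) \leq \scale(D')}} \Measure(D)^{-1/2}\,|c_D|\,\Indicator_D(x)\,dx
 = \sum_{D \in \mathcal{T}(D')} \Measure(D)^{-1/2}\,|c_D|\,\Measure(D \cap D'),
\]
where the restriction to $\mathcal{T}(D')$ is justified because the terms with $\Measure(D\cap D')=0$ vanish. The upper bound $\| c \|_{\dot{\mathbf{f}}^0_{\infty, 1}(A)} \leq \text{RHS}$ then follows immediately from the trivial estimate $\Measure(D \cap D') \leq \Measure(D)$, which turns $\Measure(D)^{-1/2}\,|c_D|\,\Measure(D\cap D')$ into $\Measure(D)^{1/2}\,|c_D|$.

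For the reverse inequality, the obstruction is that a cube $D \in \mathcal{T}(D')$ need not be contained in $D'$ (the cubes $A^i([0,1]^d+k)$ do not in general form a nested family), so $\Measure(D\cap D')$ can be much smaller than $\Measure(D)$. The key tool to overcome this is \Cref{lem:tent-superset}, which supplies an integer $N=N(A,d)\in\N$ such that
\[
 \bigcup_{D \in \mathcal{T}(D')} D
 \;\subseteq\; \bigcup_{\substack{n \in \Z^d \\ |n| \leq N}} D'_n,
 \qquad \text{where } D'_n := D' + A^{\scale(D')}n \in \CalD_A.
\]
Each $D'_n$ is a cube of $\CalD_A$ at the same scale as $D'$, so $\Measure(D'_n)=\Measure(D')$.

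Using this covering, for any $D\in\mathcal{T}(D')$ one has $\Measure(D)\leq\sum_{|n|\leq N}\Measure(D\cap D'_n)$. Multiplying by $\Measure(D)^{-1/2}|c_D|$, summing over $D\in\mathcal{T}(D')$, and swapping the two finite/countable sums yields
\[
 \sum_{D \in \mathcal{T}(D')} \Measure(D)^{1/2}\,|c_D|
 \;\leq\; \sum_{|n|\leq N} \sum_{D \in \mathcal{T}(D')} \Measure(D)^{-1/2}\,|c_D|\,\Measure(D \cap D'_n).
\]
For each fixed $n$, if $D\in\mathcal{T}(D')$ satisfies $\Measure(D\cap D'_n)>0$, then $\scale(D)\leq\scale(D')=\scale(D'_n)$, so $D\in\mathcal{T}(D'_n)$. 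Enlarging the inner sum to all $D\in\CalD_A$ with $\scale(D)\leq\scale(D'_n)$ and reversing the Fubini step shows that each inner sum equals $\int_{D'_n}\sum_{D}\Measure(D)^{-1/2}|c_D|\Indicator_D(x)\,dx$, which by the definition \eqref{eq:def_fi1} is bounded by $\Measure(D'_n)\,\| c\|_{\dot{\mathbf{f}}^0_{\infty,1}(A)} = \Measure(D')\,\| c\|_{\dot{\mathbf{f}}^0_{\infty,1}(A)}$.

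Combining these estimates gives $\sum_{D\in\mathcal{T}(D')}\Measure(D)^{1/2}|c_D| \leq (2N+1)^d\,\Measure(D')\,\|c\|_{\dot{\mathbf{f}}^0_{\infty,1}(A)}$; dividing by $\Measure(D')$ and taking the supremum over $D'\in\CalD_A$ completes the equivalence. The main technical obstacle is exactly the non-containment of cubes in $\mathcal{T}(D')$ within $D'$, resolved by \Cref{lem:tent-superset}; once that covering is in hand, the rest is bookkeeping with Fubini.
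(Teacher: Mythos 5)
Your proof is correct and follows essentially the same strategy as the paper: Fubini to rewrite the defining integral as $\sum_{D \in \mathcal{T}(D')} \Measure(D)^{-1/2}\,|c_D|\,\Measure(D\cap D')$, the trivial bound $\Measure(D\cap D')\le\Measure(D)$ for the easy inequality, and \Cref{lem:tent-superset} combined with the decomposition $\Measure(D)=\sum_{|n|\le N}\Measure(D\cap D'_n)$ (the $D'_n$ being pairwise essentially disjoint, your $\leq$ is in fact an equality) to reduce the reverse inequality back to the definition applied to the finitely many translates $D'_n$, exactly as in the paper.
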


\begin{proof}
First, note that interchanging the sum and integral in \Cref{eq:def_fi1} yields that
\begin{equation}
 \| c \|_{\dot{\mathbf{f}}^0_{\infty, 1}(A)}
 = \sup_{D' \in \mathcal{D}_A}
     \frac{1}{\Measure(D')}
     \sum_{\substack { D \in \mathcal{D}_A \\ \scale(D) \leq \scale(D')}}
       \Measure(D)^{-1/2} |c_D| \, \Measure(D \cap D'),
  \label{eq:fi1_char_tent_step1}
\end{equation}
which easily implies the claimed inequality $\lesssim$ in \Cref{eq:fi1_char_tent}.

For the reverse inequality, let $D' = A^j ([0,1]^d + \ell) \in \mathcal{D}_A$ be arbitrary.
Then an application of \Cref{lem:tent-superset} yields $N = N(A,d) \in \mathbb{N}$ such that
\begin{align*}
  T_{D'}
  &:= \frac{1}{\Measure(D')}
      \sum_{D \in \mathcal{T}(D')}
         |c_D| \Measure(D)^{-1/2} \Measure(D) \\
  &= \frac{1}{\Measure(D')}
     \sum_{D \in \mathcal{T}(D')}
       |c_D| \Measure(D)^{-1/2}
       \sum_{\substack{n \in \Z^d \\ |n| \leq N}}
         \Measure\bigl(D \cap A^j ([0,1]^d + \ell + n)\bigr) \\
  &\leq \frac{1}{\Measure(D')}
        \sum_{\substack { D \in \mathcal{D}_A \\ \scale(D) \leq j}} \,\,
          \sum_{\substack{n \in \Z^d \\ |n| \leq N}}
            |c_D| \Measure(D)^{-1/2}  \Measure\bigl(D \cap A^j ([0,1]^d + \ell + n)\bigr) \\
  &= \sum_{\substack{n \in \Z^d \\ |n| \leq N}}
       \frac{1}{|\det A|^{j}}
       \sum_{\substack { D \in \mathcal{D}_A \\ \scale(D) \leq j}}
         |c_D| \Measure(D)^{-1/2} \Measure\bigl(D \cap A^j ([0,1]^d + \ell + n)\bigr).
\end{align*}
Note that $j = \scale(A^j([0,1]^d + \ell + n)) = \scale (D')$.
Therefore, taking the supremum over all $D' = A^j ([0,1]^d + \ell)$ for $j \in \mathbb{Z}$ and
$\ell \in \mathbb{Z}^d$ gives that
\begin{align*}
  \sup_{D' \in \mathcal{D}_A}
    T_{D'}
  & \leq \sum_{\substack{n \in \Z^d \\ |n| \leq N}}
           \sup_{j \in \mathbb{Z}, \ell \in \mathbb{Z}^d}
             \frac{1}{|\det A|^j}
             \sum_{\substack { D \in \mathcal{D}_A \\ \scale(D) \leq j}}
               |c_D| \Measure(D)^{-1/2} \Measure\bigl(D \cap A^j ([0,1]^d + \ell + n)\bigr) \\
  &\lesssim_{d,N} \sup_{D' \in \mathcal{D}_A}
                    \frac{1}{\Measure(D')}
                    \sum_{\substack { D \in \mathcal{D}_A \\ \scale(D) \leq \scale(D')}}
                      |c_D| m(D)^{-1/2} \Measure(D \cap D'),
\end{align*}
which completes the proof.
\end{proof}

For obtaining the actual dual characterization of the spaces $\dot{\mathbf{f}}^0_{1, \infty} (A)$
and $\dot{\mathbf{f}}^0_{\infty, 1}(A)$, the following lemma will be used.
It is \cite[Proposition 1.4]{hanninen2018equivalence} applied to
the special case of dilated cubes; see also \cite[Theorem 4]{verbitsky1996imbedding}
for the case of isotropic dilations.

\begin{lemma}[\cite{hanninen2018equivalence}]\label{lem:carleson_equiv}
  Let $a = (a_D)_{D \in \mathcal{D}_A}$ be a fixed but arbitrary
  sequence of non-negative reals.
  Then for every $C > 0$, the following assertions are equivalent:
  \begin{enumerate}[(i)]
  \item The sequence $a = (a_D)_{D \in \mathcal{D}_A}$ is a $C$-Carleson
        sequence, i.e.,
        \begin{align}
          \label{eq:carleson}
          \sum_{D \in \mathcal{D}'_A} a_D
          \leq C \Measure \bigg( \bigcup_{D \in \mathcal{D}'_A} D \bigg)
        \end{align}
        for every subcollection $\mathcal{D}'_A$ of the dilated cubes $\mathcal{D}_A$.

  \item For every sequence $b = (b_D)_{D \in \mathcal{D}_A}$ of
        non-negative reals, the estimate
        \[
          \sum_{D \in \mathcal{D}_A} a_D b_D \leq C \int_{\mathbb{R}^d}
          \sup_{D \in \mathcal{D}_A} b_D \mathds{1}_D (x) \; dx
        \]
        holds.
  \end{enumerate}
\end{lemma}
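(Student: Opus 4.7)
The plan is to prove the two implications separately, using the classical layer-cake / distribution-function decomposition to pass between the discrete Carleson estimate in (i) and the integral estimate in (ii). Since the expressions in (i) and (ii) are both $\R_{\geq 0}$-valued and linear in $a$, the substantive content of the equivalence lives in the ``testing'' step that links binary subcollections to arbitrary non-negative sequences.

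First I would handle the easy direction (ii) $\Rightarrow$ (i) by a choice of test sequence. Given a subcollection $\mathcal{D}'_A \subseteq \mathcal{D}_A$, apply (ii) to $b_D := 1$ if $D \in \mathcal{D}'_A$ and $b_D := 0$ otherwise. For such a binary $b$, one has the pointwise identity $\sup_{D \in \mathcal{D}_A} b_D \, \mathds{1}_D(x) = \mathds{1}_{\bigcup_{D \in \mathcal{D}'_A} D}(x)$, so that the right-hand side of (ii) collapses to $C \, \Measure\bigl(\bigcup_{D \in \mathcal{D}'_A} D\bigr)$ while the left-hand side is exactly $\sum_{D \in \mathcal{D}'_A} a_D$. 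This yields (i).

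For the converse (i) $\Rightarrow$ (ii), I would slice by superlevel sets. Fix a non-negative sequence $b = (b_D)_{D \in \mathcal{D}_A}$ and, starting from $b_D = \int_0^\infty \mathds{1}_{\{b_D > t\}} \, dt$, apply Fubini--Tonelli (all terms are non-negative, and the index set $\mathcal{D}_A$ is countable) to get
\[
  \sum_{D \in \mathcal{D}_A} a_D b_D
  = \int_0^\infty \Big( \sum_{D \in \mathcal{D}_A : b_D > t} a_D \Big) \, dt.
\]
For each fixed $t > 0$, apply hypothesis (i) to the subcollection $\mathcal{D}_A(t) := \{ D \in \mathcal{D}_A : b_D > t \}$ to bound the inner sum by $C \, \Measure\bigl( \bigcup_{D \in \mathcal{D}_A(t)} D \bigr)$. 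The crucial observation is that $\bigcup_{D \in \mathcal{D}_A(t)} D = \{ x \in \R^d : \sup_{D \in \mathcal{D}_A} b_D \, \mathds{1}_D(x) > t \}$, so setting $M(x) := \sup_{D \in \mathcal{D}_A} b_D \, \mathds{1}_D(x)$ and invoking the layer-cake formula in the reverse direction gives
\[
  \sum_{D \in \mathcal{D}_A} a_D b_D
  \leq C \int_0^\infty \Measure(\{ M > t \}) \, dt
  = C \int_{\R^d} M(x) \, dx,
\]
which is exactly (ii).

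There is no real obstacle in this argument; the remaining checks are all routine. Measurability of $M$ is immediate since it is a countable supremum of simple functions; the distribution function $t \mapsto \Measure(\{M > t\})$ is automatically Lebesgue-measurable by monotonicity; and the two appeals to Fubini--Tonelli are justified by the non-negativity of all summands and integrands. The mildly subtle point, if any, is recognizing that the union $\bigcup_{D \in \mathcal{D}_A(t)} D$ coincides with the superlevel set $\{M > t\}$, which is the geometric content that makes the two layer-cake applications match up.
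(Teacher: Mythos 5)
Your proof is correct: the layer-cake decomposition, combined with the key pointwise identity $\bigcup_{D : b_D > t} D = \{x : \sup_{D} b_D \, \mathds{1}_D(x) > t\}$, is exactly the right mechanism for the implication (i) $\Rightarrow$ (ii), and testing (ii) against binary sequences gives (i) at once. Note that the paper does not supply a proof of this lemma at all --- it is quoted from \cite{hanninen2018equivalence} --- and the argument there is essentially the one you give, so there is no divergence to report.
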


The significance of a Carleson sequence \eqref{eq:carleson} for the
purpose of the present paper is that it characterizes membership of
$\dot{\mathbf{f}}_{\infty, 1}^0 (A)$.
Although this fact is well-known for isotropic dilations
(cf.\ \cite{hanninen2018equivalence, verbitsky1996imbedding}),
the anisotropic version requires some additional arguments due to the fact
that dilated cubes are not necessarily nested.
The details are provided in the next lemma.

\begin{lemma}\label{lem:carleson_fi1}
  Let $A \in \mathrm{GL}(d, \mathbb{R})$ be expansive
  and let $(c_D)_{D \in \mathcal{D}_A}$ be a complex-valued sequence.
  Then $c \in \dot{\mathbf{f}}_{\infty, 1}^0(A)$ if, and only if, there exists $C > 0$ such that
  \begin{align} \label{eq:carleson_fi1}
   \sum_{D \in \mathcal{D}_A'} |c_D| \Measure(D)^{1/2}
   \leq C \Measure \bigg( \bigcup_{D \in \mathcal{D}'_A} D \bigg)
  \end{align}
  for every subcollection $\mathcal{D}'_A \subseteq \mathcal{D}_A$.
  Moreover,
  \begin{equation}\label{eq:fi1-equiv-pre}
    \| c \|_{\dot{\mathbf{f}}^0_{\infty, 1}(A)}
    \asymp \inf
           \bigg\{
             C >0
             :
             \sum_{D \in \mathcal{D}_A'}
               |c_D| \Measure(D)^{1/2}
             \leq C \Measure \bigg( \bigcup_{D \in \mathcal{D}'_A} D \bigg)
             \text{ for all } \mathcal{D}'_A \subseteq \mathcal{D}_A
           \bigg\},
  \end{equation}
  with implicit constant independent of $c$.
\end{lemma}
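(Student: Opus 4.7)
The plan is to pass through the tent characterization of \Cref{lem:fi1_char_tent}. Set
\[
  a_D := |c_D| \Measure(D)^{1/2}
  \quad \text{and} \quad
  T := \sup_{D' \in \mathcal{D}_A}
         \frac{1}{\Measure(D')}
         \sum_{D \in \mathcal{T}(D')} a_D ,
\]
so that $\|c\|_{\dot{\mathbf{f}}^0_{\infty,1}(A)} \asymp T$ by \Cref{lem:fi1_char_tent}. The task then reduces to showing that $T$ is comparable to the minimal constant $C$ for which $a$ is $C$-Carleson. The direction ``Carleson dominates $T$'' will be immediate from \Cref{lem:tent-superset}: testing the Carleson inequality on the subfamily $\mathcal{D}' = \mathcal{T}(D_0)$ and bounding $\Measure\bigl(\bigcup_{D \in \mathcal{T}(D_0)} D\bigr) \lesssim_{A,d} \Measure(D_0)$ yields $T \lesssim_{A,d} C$.

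The reverse direction, that $a$ is itself $T$-Carleson, is the heart of the proof, and I plan to carry it out by a greedy stopping-time construction directly on $\mathcal{D}_A$. Fix $\mathcal{D}' \subseteq \mathcal{D}_A$ and let $U := \bigcup_{D \in \mathcal{D}'} D$. The Carleson inequality is vacuous when $\Measure(U) = \infty$, so I may assume $\Measure(U) < \infty$; this forces cubes in $\mathcal{D}'$ to have scale at most $\log_{|\det A|} \Measure(U)$ and leaves only finitely many cubes of $\mathcal{D}'$ at each fixed scale, since same-scale dilated cubes in $\mathcal{D}_A$ are measure-disjoint. I then set $\mathcal{D}'_0 := \mathcal{D}'$ and, while $\mathcal{D}'_k$ is non-empty, choose $V_k \in \mathcal{D}'_k$ of maximal scale and put $\mathcal{D}'_{k+1} := \mathcal{D}'_k \setminus \mathcal{T}(V_k)$. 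Two observations should close the argument: every $D \in \mathcal{D}'$ is eventually assigned to some $V_k$ with $D \in \mathcal{T}(V_k)$ (a scale-by-scale exhaustion), and the selected $V_k$'s are pairwise disjoint, because for $i < j$ the cube $V_j$ lies in $\mathcal{D}'_i$, so $\scale(V_j) \leq \scale(V_i)$ by the maximality step and $V_j \notin \mathcal{T}(V_i)$ by construction of $\mathcal{D}'_{i+1}$, which together force $V_i \cap V_j = \emptyset$.

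Once these are in place, the disjoint $V_k$'s are all contained in $U$, so $\sum_k \Measure(V_k) \leq \Measure(U)$, and chaining the tent bound at each $V_k$ gives
\[
  \sum_{D \in \mathcal{D}'} a_D
    \leq \sum_k \sum_{D \in \mathcal{T}(V_k)} a_D
    \leq T \sum_k \Measure(V_k)
    \leq T \Measure(U),
\]
which proves that $a$ is $T$-Carleson and, combined with the easy direction, establishes \eqref{eq:fi1-equiv-pre}. The main delicacy is the exhaustion claim; although intuitively clear from the scale-bound and scale-finiteness above, it does require verifying that after processing the finitely many cubes of $\mathcal{D}'$ at a given maximal scale (each of which is eventually picked as some $V_k$, since distinct same-scale cubes neither intersect nor remove one another via their tents), the algorithm descends to the next lower scale and repeats.
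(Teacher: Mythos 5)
Your proposal is correct and follows essentially the same strategy as the paper: the easy direction via \Cref{lem:tent-superset} is identical, and your greedy one-cube-at-a-time stopping construction (pick a maximal-scale survivor $V_k$, delete its tent, repeat) is a minor cosmetic variant of the paper's scale-by-scale version, which instead removes \emph{all} surviving cubes of maximal scale together with their tents at each step; both produce a pairwise measure-disjoint selection whose tents cover $\mathcal{D}'_A$, after which the chaining estimate is the same. The exhaustion and disjointness checks you flag are handled correctly: since every cube of $\mathcal{D}'$ has scale at most $\log_{|\det A|}\Measure(U)$ and only finitely many cubes of $\mathcal{D}'$ exist at each scale, only finitely many $V_k$ with $\scale(V_k)\ge\scale(D)$ can be picked before $D$ itself is removed.
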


\begin{proof}
  First, it will be shown that if $(c_D)_{D \in \mathcal{D}_A}$ satisfies \eqref{eq:carleson_fi1},
  then $(c_D)_{D \in \mathcal{D}_A} \in \dot{\mathbf{f}}_{\infty, 1}^0(A)$.
  For this, let $D' \in \mathcal{D}_A$ be arbitrary.
  Then for any $C > 0$ satisfying \eqref{eq:carleson_fi1}, we have, by \Cref{lem:tent-superset},
  \begin{align*}
    \sum_{D \in \mathcal{T} (D')} |c_D| \Measure(D)^{1/2}
    \leq C \Measure \bigg(\bigcup_{D \in \mathcal{T}(D')} D \bigg)
    \leq C \Measure \bigg( \bigcup_{\substack{n \in \Z^d \\ |n| \leq N}} D' + A^{\scale(D')}n \bigg)
    \lesssim C \Measure(D'),
  \end{align*}
  with implicit constant independent of $D'$.
  Hence,
  \[
    \frac{1}{\Measure(D')}
    \sum_{D \in \mathcal{T}(D')} |c_D| \Measure(D)^{1/2}
    \lesssim C,
  \]
  which yields
  $\| c \|_{\dot{\mathbf{f}}_{\infty, 1}^0(A)} \lesssim C$ by \Cref{lem:fi1_char_tent}.
  This also implies $\lesssim$ in \Cref{eq:fi1-equiv-pre}.

  Conversely, let $\mathcal{D}_A' \subseteq \mathcal{D}_A$ be any subcollection.
  Note first that if, for all $N \in \mathbb{N}$, there exists
  some $D'\in \mathcal{D}_A'$ with $\scale(D') > N$, then
  \[
    \Measure \bigg( \bigcup_{D \in \mathcal{D}_A'} D \bigg)
    \geq \Measure(D')
    = |\det A|^{\scale(D')}
    > |\det A|^N
    .
  \]
  Hence, $\Measure \big( \bigcup_{D \in \mathcal{D}_A'} D \big) = \infty$ and
  \eqref{eq:carleson_fi1} is trivially satisfied.
  Therefore, suppose throughout the remainder of the proof that there exists
  $N \in \mathbb{N}$ such that $\scale(D) \leq N$ for all $D \in \mathcal{D}_A'$.
  Set $j_1 := \max \{ \scale(D) : D \in \mathcal{D}_A' \} \leq N$, and define
  \[
    \mathcal{D}_1''
    := \{ D \in \mathcal{D}_A' : \scale(D) = j_1 \}.
  \]
  Furthermore, set
  \(
    (\mathcal{D}_{1}'')^c
    := \{
         D \in \mathcal{D}_A'
         :
         D \notin \mathcal{T}(D') \; \text{for any} \; D' \in \mathcal{D}_1''
       \}
    .
  \)
  Observe that the elements of $\mathcal{D}_1''$ are pairwise disjoint up to measure zero.
  Moreover, by construction, the unions $\bigcup_{D' \in \mathcal{D}_1''} D'$ and
  $\bigcup_{D \in (\mathcal{D}_1'')^c} D$ are disjoint up measure zero and
  $ \mathcal{D}_A' \subseteq \bigcup_{D \in \mathcal{D}''_1} \mathcal{T} (D) \cup (\mathcal{D}_{1}'')^c$.

  For $\ell \geq 2$, we define $\mathcal{D}''_\ell$ inductively as follows:
  Set $j_\ell := \max \{ \scale(D) : D \in (\mathcal{D}_{\ell-1}'')^c \}$,
  \[
    \mathcal{D}_{\ell} ''
    := \{ D \in (\mathcal{D}_{\ell-1}'')^c : \scale(D) = j_\ell \},
  \]
  and
  \(
    (\mathcal{D}_{\ell}'')^c
    := \{
         D \in (\mathcal{D}_{\ell-1}'')^c
         :
         D \notin \mathcal{T}(D') \; \text{for any} \; D' \in \mathcal{D}_{\ell}''
       \}.
  \)
  Then, by construction, the dilated cubes in
  $\mathcal{D}_A'': = \bigcup_{\ell = 1}^{\infty} \mathcal{D}''_\ell$ are
  pairwise disjoint up to measure zero and
  \[
    \mathcal{D}_A'
    \subseteq \bigcup_{\ell = 1}^{\infty}
                \bigcup_{D \in \mathcal{D}''_\ell}
                  \mathcal{T} (D)
    = \bigcup_{D \in \mathcal{D}_A''} \mathcal{T} (D).
  \]

  Based on this construction, a direct calculation using \Cref{lem:fi1_char_tent} yields
  \begin{align*}
    \sum_{D \in \mathcal{D}_A'} |c_D| \Measure(D)^{1/2}
    &\leq \sum_{D' \in \mathcal{D}_A''}
      \sum_{D \in \mathcal{T}(D')} |c_D| \Measure(D)^{1/2}
    \lesssim \| c \|_{\dot{\mathbf{f}}^0_{\infty, 1}(A)}
      \sum_{D' \in \mathcal{D}_A''} \Measure(D') \\
    &= \| c \|_{\dot{\mathbf{f}}^0_{\infty, 1}(A)} \,
       \Measure \bigg( \bigcup_{D' \in \mathcal{D}_A''} D'\bigg)
    \leq \| c \|_{\dot{\mathbf{f}}^0_{\infty, 1}(A)} \,
         \Measure \bigg(\bigcup_{D \in \mathcal{D}_A'} D \bigg),
  \end{align*}
  where the last inequality used that $\mathcal{D}_A'' \subseteq \mathcal{D}_A'$.
  Hence $\| c \|_{\dot{\mathbf{f}}^0_{\infty, 1}(A)}$ satisfies
  \Cref{eq:carleson_fi1}, which also implies the inequality $\gtrsim$ in \eqref{eq:fi1-equiv-pre}.
\end{proof}

A combination of Lemmata \ref{lem:carleson_equiv} and \ref{lem:carleson_fi1}
yields the following dual characterization.

\begin{corollary}\label{lem:norming_sequence}
  Let $A \in \mathrm{GL}(d, \mathbb{R})$ be expansive.
  Then, for all $c \in \dot{\mathbf{f}}^0_{1, \infty} (A)$
  and $c' \in \dot{\mathbf{f}}^0_{\infty, 1} (A)$,
  \begin{align}\label{eq:pairing_seq_infty}
    | \langle c, c' \rangle |
    := \bigg| \sum_{D \in \mathcal{D}_A} c_{D} \overline {c'_{D}} \bigg|
    \lesssim \| c \|_{\dot{\mathbf{f}}^0_{1, \infty} (A)}  \| c' \|_{\dot{\mathbf{f}}^0_{\infty, 1} (A)} .
\end{align}
Moreover, it holds that
\begin{align} \label{eq:dual_sequence}
  \| c \|_{\dot{\mathbf{f}}^0_{1, \infty} (A)}
  \asymp \sup
         \big\{
           |\langle c, c' \rangle |
           \; :  \;
           c' \in \dot{\mathbf{f}}^0_{\infty, 1}(A), \;\;
           \| c' \|_{\dot{\mathbf{f}}^0_{ \infty,1}(A)} \leq 1
         \big\}.
\end{align}
\end{corollary}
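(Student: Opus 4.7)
The plan is to derive both \eqref{eq:pairing_seq_infty} and \eqref{eq:dual_sequence} from the two equivalences of the previous page, namely the Carleson-sequence criterion for $\dot{\mathbf{f}}^0_{\infty,1}(A)$ in Lemma \ref{lem:carleson_fi1} and the abstract Carleson-testing lemma (Lemma \ref{lem:carleson_equiv}).

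For the pairing bound \eqref{eq:pairing_seq_infty}, I would apply Lemma \ref{lem:carleson_equiv} with the non-negative sequences $a_D := |c'_D| \, \Measure(D)^{1/2}$ and $b_D := \Measure(D)^{-1/2} |c_D|$. By Lemma \ref{lem:carleson_fi1}, the hypothesis $c' \in \dot{\mathbf{f}}^0_{\infty,1}(A)$ means precisely that $(a_D)_{D \in \CalD_A}$ is a $C$-Carleson sequence with $C \asymp \| c' \|_{\dot{\mathbf{f}}^0_{\infty,1}(A)}$. Then the implication $(i)\Rightarrow(ii)$ of Lemma \ref{lem:carleson_equiv} together with $|\langle c, c'\rangle| \leq \sum_D a_D b_D$ and the definition of the $\dot{\mathbf{f}}^0_{1,\infty}$-norm yields \eqref{eq:pairing_seq_infty} directly.

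For \eqref{eq:dual_sequence}, the inequality $\sup |\langle c, c' \rangle| \lesssim \| c \|_{\dot{\mathbf{f}}^0_{1,\infty}(A)}$ is immediate from \eqref{eq:pairing_seq_infty}. For the reverse, I would construct a near-extremal $c'$ as follows. Enumerate $\CalD_A = \{D_n\}_{n\in\N}$, put $f_n(x) := \Measure(D_n)^{-1/2} |c_{D_n}| \mathds{1}_{D_n}(x)$ and $F_N(x) := \max_{n \leq N} f_n(x)$, and define the disjoint measurable sets
\[
  E_n^N := \bigl\{ x \in \R^d : F_N(x) > 0,\ n = \min\{k \leq N : f_k(x) = F_N(x)\} \bigr\}
  \subseteq D_n.
\]
Set $c'_{D_n} := \overline{\operatorname{sgn}(c_{D_n})} \, \Measure(D_n)^{-1/2} \, \Measure(E_n^N)$ for $n \leq N$ and $c'_D := 0$ otherwise. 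By construction, $\langle c, c' \rangle = \sum_{n \leq N} |c_{D_n}| \Measure(D_n)^{-1/2} \Measure(E_n^N) = \int_{\R^d} F_N(x) \, dx$, while for any subcollection $\CalD_A' \subseteq \CalD_A$, disjointness of the $E_n^N$ and the inclusion $E_n^N \subseteq D_n$ give
\[
  \sum_{D \in \CalD_A'} |c'_D| \Measure(D)^{1/2}
  = \sum_{\substack{n \leq N \\ D_n \in \CalD_A'}} \Measure(E_n^N)
  \leq \Measure\Big( \bigcup_{D \in \CalD_A'} D \Big),
\]
so Lemma \ref{lem:carleson_fi1} yields $\| c' \|_{\dot{\mathbf{f}}^0_{\infty,1}(A)} \lesssim 1$ uniformly in $N$.

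Combining, $\int F_N = |\langle c, c' \rangle| \lesssim \sup \{ |\langle c, \tilde c \rangle| : \| \tilde c \|_{\dot{\mathbf{f}}^0_{\infty,1}(A)} \leq 1 \}$, and letting $N \to \infty$ via the monotone convergence theorem (since $F_N \uparrow \sup_n f_n$ pointwise) gives $\| c \|_{\dot{\mathbf{f}}^0_{1,\infty}(A)} \lesssim \sup |\langle c, c'\rangle|$. The only real obstacle is that the pointwise supremum defining the $\dot{\mathbf{f}}^0_{1,\infty}$-norm need not be attained at each $x$, which is precisely why the finite truncation $F_N$ and monotone convergence are used; measurability of the $E_n^N$ is automatic since they are defined by countably many measurable comparisons among the $f_k$.
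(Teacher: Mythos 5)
Your proof of the pairing bound \eqref{eq:pairing_seq_infty} is the same as the paper's: both set $a_D := |c'_D| \, \Measure(D)^{1/2}$, $b_D := \Measure(D)^{-1/2}|c_D|$, and combine the implication $(i)\Rightarrow(ii)$ of \Cref{lem:carleson_equiv} with the Carleson characterization of $\dot{\mathbf{f}}^0_{\infty,1}(A)$ in \Cref{lem:carleson_fi1}.

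Your argument for the reverse inequality in \eqref{eq:dual_sequence}, however, is genuinely different from the paper's. The paper invokes the Lorentz--Luxemburg duality theorem for normed Köthe spaces (Zaanen): it defines the iterated associate norms $\| \cdot \|^{(n)}$, identifies $\| \cdot \|^{(1)} \asymp \| \cdot \|_{\dot{\mathbf{f}}^0_{\infty,1}(A)}$ via the Carleson lemmas, checks the Fatou property, and concludes $\| \cdot \|^{(2)} = \| \cdot \|_{\dot{\mathbf{f}}^0_{1,\infty}(A)}$. You instead construct explicit near-extremal test sequences: enumerating $\mathcal{D}_A$, forming the truncated supremum $F_N = \max_{n\leq N} f_n$, disjointifying $\{F_N > 0\}$ into $E_n^N \subseteq D_n$ via a minimum-index selector, and choosing $c'$ supported on $\{D_1,\dots,D_N\}$ with $|c'_{D_n}| = \Measure(D_n)^{-1/2}\Measure(E_n^N)$. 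You then verify the Carleson condition with constant $1$ by disjointness, obtaining $\|c'\|_{\dot{\mathbf{f}}^0_{\infty,1}(A)} \lesssim 1$ uniformly in $N$, and pass to the limit with monotone convergence. This is correct and entirely self-contained, replacing the abstract Köthe-duality black box with a direct construction that also explains \emph{why} the duality holds. One small slip: with the paper's convention $\langle c,c'\rangle := \sum_D c_D \overline{c'_D}$, you should set $c'_{D_n} := \operatorname{sgn}(c_{D_n}) \Measure(D_n)^{-1/2}\Measure(E_n^N)$ (no extra conjugation), so that $c_{D_n}\overline{c'_{D_n}} = |c_{D_n}| \Measure(D_n)^{-1/2}\Measure(E_n^N)$; with the conjugate bar as you wrote it, $c_D \overline{c'_D}$ acquires phase $[\operatorname{sgn}(c_D)]^2 \neq 1$. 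This is a cosmetic fix that does not affect the substance of the argument.
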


\begin{proof}
  For $c' \in \dot{\mathbf{f}}^0_{ \infty, 1} (A)$ and
  $c \in \dot{\mathbf{f}}^0_{1, \infty} (A)$, define sequences by
  $a_D := |c'_D| \Measure(D)^{1/2}$ and $b_D := |c_D| \Measure(D)^{-1/2}$ for $D \in \mathcal{D}_A$.
  Then, by Lemma~\ref{lem:carleson_fi1}, we see that $(a_D)_{D \in \CalD_A}$ is a $C$-Carleson
  sequence, where $C \lesssim \| c' \|_{\dot{\mathbf{f}}^0_{\infty, 1} (A)}$.
  By Lemma~\ref{lem:carleson_equiv}, this implies
  \[
    \sum_{D \in \mathcal{D}_A} |c_D c'_D|
    = \sum_{D \in \mathcal{D}_A} a_D b_D \leq C \int_{\mathbb{R}^d}
    \sup_{D \in \mathcal{D}_A} b_D \mathds{1}_D (x) \; dx
    \lesssim \| c \|_{\dot{\mathbf{f}}^0_{1, \infty} (A)}
             \| c' \|_{\dot{\mathbf{f}}^0_{\infty, 1} (A)},
  \]
  showing \Cref{eq:pairing_seq_infty}.

  To obtain the dual characterization \eqref{eq:dual_sequence}, we follow
  \cite[Section~69]{zaanen1967integration} and define the associate
  norms of $\| \cdot \|_{\dot{\mathbf{f}}^0_{1, \infty}}$ by
  $\| \cdot \|^{(0)} := \| \cdot \|_{\dot{\mathbf{f}}^0_{1, \infty}}$ and
  \[
    \| c \|^{(n)}
    := \sup \bigg\{ \sum_{D \in \mathcal{D}'_A} |c_D c'_D| \; : \; \| c' \|^{(n-1)} \leq 1 \bigg\}
    = \sup \{ | \langle c, c' \rangle |  \; : \; \| c' \|^{(n-1)} \leq 1 \},
    \quad n \geq 1,
  \]
  where the equality can be shown using the solidity of the
  associate norms and choosing sequences $c'$ with appropriate (complex) signs;
  see also \cite[Section~69, Theorem~1]{zaanen1967integration} for details.
  In the following, we consider $\| \cdot \|^{(1)}$ and $\| \cdot \|^{(2)}$ in more detail.
  Starting with $\| \cdot \|^{(1)}$, we interpret the supremum as an infimum over
  all upper bounds.
  Then the characterizations of Lemma~\ref{lem:carleson_equiv} and \ref{lem:carleson_fi1} give
  \begin{align*}
    \| c \|^{(1)}
    &= \sup
       \bigg\{
         \sum_{D \in \mathcal{D}'_A} |c_D c'_D|
         :
         \| c' \|_{\dot{\mathbf{f}}^0_{1,\infty}}  \leq 1
       \bigg\} \\
    &= \inf
       \bigg\{
         C > 0
         :
         \sum_{D \in \mathcal{D}'_A} |c_D c'_D| \leq C \| c' \|_{\dot{\mathbf{f}}^0_{1,\infty}}
         \text{ for all } c' \in \dot{\mathbf{f}}^0_{1,\infty}(A)
       \bigg\} \\
    &= \inf
        \bigg\{
          C > 0
          :
          \sum_{D \in \mathcal{D}_A'}
            |c_D| \Measure(D)^{1/2}
          \leq C
               \Measure \bigg( \bigcup_{D \in \mathcal{D}'_A} D \bigg)
          \text{ for all } \mathcal{D}'_A \subseteq \mathcal{D}_A
        \bigg\} \\
    & \asymp \| c \|_{\dot{\mathbf{f}}^0_{\infty, 1}(A)}.
  \end{align*}
  The Lorentz-Luxemburg duality theorem for
  normed Köthe spaces (see, e.g., \cite[Section~71, Theorem~1]{zaanen1967integration}) states that
  $\| \cdot \|_{\dot{\mathbf{f}}^0_{1, \infty}} = \| \cdot \|^{(2)}$
  provided $ \| \cdot \|_{\dot{\mathbf{f}}^0_{1, \infty}}$ satisfies the Fatou property.
  Since the latter is a straightforward consequence of Fatou's lemma
  and \cite[Section~65, Theorem~3]{zaanen1967integration}), we obtain
  \[
    \| c \|_{\dot{\mathbf{f}}^0_{1, \infty} (A)}
    = \| c \|^{(2)}
    = \sup \big\{ |\langle c, c' \rangle | \; :  \; \| c' \|^{(1)} \leq 1 \big\}
    \asymp \sup \big\{ |\langle c, c' \rangle | \; :  \; \| c' \|_{\dot{\mathbf{f}}^0_{\infty, 1}(A)}
    \leq 1 \big\}
  \]
  for arbitrary $c \in \dot{\mathbf{f}}^0_{1, \infty} (A)$.
  This completes the proof.
\end{proof}

The final result of this section is the desired dual norm characterization
of $\dot{\mathbf{F}}^{0}_{1,\infty}(A)$.

\begin{proposition}\label{prop:dualnormF1inf}
  Let $A \in \mathrm{GL}(d, \mathbb{R})$ be expansive.
  Then, for all $g \in \Schwartz_0(\R^d)$,
  \[
    \| g \|_{\dot{\mathbf{F}}^{0}_{1,\infty}(A)}
    \asymp \sup_{\substack{ f \in \dot{\mathbf{F}}^0_{\infty, 1}  \\
    \| f \|_{\dot{\mathbf{F}}^0_{\infty, 1}} \leq 1} } |\langle f,
    g\rangle|.
  \]
\end{proposition}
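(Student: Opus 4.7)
The plan is to reduce the claimed function-space duality to the sequence-space duality in Corollary~\ref{lem:norming_sequence}, via the anisotropic $\varphi$-transform of Bownik developed in \cite{bownik2006atomic, bownik2007anisotropic, bownik2008duality}. Concretely, I would fix compatible analyzing vectors $\varphi, \psi \in \Schwartz(\R^d)$ and consider the analysis operator $S_\psi : f \mapsto (s_D(f))_{D \in \CalD_A}$, where $s_D(f)$ is a suitably normalized sample of $f \ast \psi_{-\scale(D)}$, together with the corresponding synthesis operator $T_\varphi : (c_D)_{D} \mapsto \sum_D c_D \varphi_D^{\mathrm{mol}}$ acting on smooth molecules $\varphi_D^{\mathrm{mol}}$. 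For the indices at stake, the ingredients one needs are: (i) $S_\psi : \TLgeneral{p}{q}{0}(A) \to \dot{\mathbf{f}}^0_{p,q}(A)$ and $T_\varphi : \dot{\mathbf{f}}^0_{p,q}(A) \to \TLgeneral{p}{q}{0}(A)$ are bounded with $T_\varphi S_\psi = \mathrm{id}$ for $(p,q) \in \{(1,\infty),(\infty,1)\}$; (ii) the distributional pairing decomposes as $\langle f, g\rangle = \sum_{D \in \CalD_A} s_D(f)\,\overline{s_D(g)}$, which follows by inserting the reproducing formula $g = T_\varphi S_\psi g$ into the pairing.

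Granting (i)--(ii), the upper bound is immediate from the sequence-level H\"older-type estimate \eqref{eq:pairing_seq_infty}: for any $f \in \TLgeneral{\infty}{1}{0}(A)$ with norm at most $1$, one has $\|S_\psi f\|_{\dot{\mathbf{f}}^0_{\infty,1}(A)} \lesssim 1$, and hence
\[
  |\langle f, g\rangle|
  = \Big|\sum_{D \in \CalD_A} s_D(f) \overline{s_D(g)}\Big|
  \lesssim \|S_\psi f\|_{\dot{\mathbf{f}}^0_{\infty,1}(A)} \,
           \|S_\psi g\|_{\dot{\mathbf{f}}^0_{1,\infty}(A)}
  \lesssim \|g\|_{\TLgeneral{1}{\infty}{0}(A)}.
\]
For the matching lower bound, I would apply \eqref{eq:dual_sequence} to the sequence $c := S_\psi g$ to obtain $c' \in \dot{\mathbf{f}}^0_{\infty,1}(A)$ with $\|c'\|_{\dot{\mathbf{f}}^0_{\infty,1}(A)} \leq 1$ and $|\langle c', c\rangle| \gtrsim \|c\|_{\dot{\mathbf{f}}^0_{1,\infty}(A)} \asymp \|g\|_{\TLgeneral{1}{\infty}{0}(A)}$. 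Synthesizing $f := T_\varphi c' \in \TLgeneral{\infty}{1}{0}(A)$ then gives $\|f\|_{\TLgeneral{\infty}{1}{0}(A)} \lesssim 1$; combining $T_\varphi S_\psi = \mathrm{id}$ with the pairing identity (ii) yields $\langle f, g\rangle = \langle c', S_\psi g\rangle = \langle c', c\rangle$, which, after an innocuous rescaling, is the sought lower bound.

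The main obstacle is the rigorous justification of the pairing identity in the non-reflexive endpoint regime: one needs absolute convergence of the double series $\sum_D s_D(f)\overline{s_D(g)}$ for $g \in \Schwartz_0(\R^d)$ and $f \in \TLgeneral{\infty}{1}{0}(A)$, together with compatibility of the $\varphi$-transform with the distributional pairing $\TLgeneral{\infty}{1}{0}(A) \times \Schwartz_0(\R^d) \to \CC$. Both points are addressed in the atomic/molecular theory of \cite{bownik2007anisotropic, bownik2008duality}, which explicitly covers $p = \infty$ and thus the two endpoints $(1,\infty)$ and $(\infty,1)$ used here; from this machinery the identity and the norm equivalences (i) transfer at once to the level of sequence spaces, after which Corollary~\ref{lem:norming_sequence} finishes the proof.
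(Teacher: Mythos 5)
Your proposal is correct and follows essentially the same route as the paper: discretize via the $\varphi$-transform (the paper uses the self-dual version with a single $\psi$ as in \cite[Theorem~3.12]{bownik2007anisotropic} rather than a pair $\varphi,\psi$, which is only a cosmetic difference), invoke the Parseval-type pairing identity (the paper cites \cite[Lemma~2.8]{bownik2006atomic} for precisely the absolute-convergence and compatibility issue you flag), combine with the sequence-space duality of \Cref{lem:norming_sequence}, and push the norming elements back through the synthesis operator. The one small inaccuracy is that \Cref{lem:norming_sequence} only asserts an equivalence for the supremum, so a single norming $c'$ need not exist; the paper works instead with a sequence $(c^{(n)})_n$ and passes to a $\limsup$, but this does not affect the substance of the argument.
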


\begin{proof}
  By \cite[Theorem 3.12]{bownik2007anisotropic}, there exists a function
  $\psi \in \mathcal{S} (\mathbb{R}^d)$ with compact Fourier support
  such that the operator
  $\mathscr{C}_{\psi} f = (\langle f, \psi_D \rangle)_{D \in \mathcal{D}}$
  is bounded from $\TLzero(A)$ into $\dot{\mathbf{f}}^{0}_{p,q}(A)$ and furthermore
  the operator $\mathscr{D}_{\psi} c = \sum_{D \in \mathcal{D}} c_D \psi_D$ is
  bounded from $\dot{\mathbf{f}}^{0}_{p,q}(A)$ into $\TLzero(A)$ for all $p,q \in (0, \infty]$.
  Moreover, their composition $\mathscr{D}_{\psi} \circ \mathscr{C}_{\psi}$ is the identity on
  $\TLzero(A)$.
  Here, for $D = A^{j}([0,1]^d + k)$, the function $\psi_D$ is defined as
  $\psi_D (x) = |\det A|^{-j/2} \psi(A^{-j} x - k)$;
  see \cite[Equation~(2.9)]{bownik2007anisotropic}.

  Next, \cite[Lemma 2.8]{bownik2006atomic} implies for all
  $f \!\in\! {\dot{\mathbf{F}}^0_{\infty, 1} (A)} \!\subseteq \mathcal{S}' / \mathcal{P} \cong \Schwartz_0'(\R^d)$
  and $g \!\in\! \mathcal{S}_0 (\mathbb{R}^d) \subseteq {\dot{\mathbf{F}}^0_{1, \infty} (A)}$ that
  \[
    \langle f, g \rangle
    = \sum_{D \in \mathcal{D}_A}
        \langle f, \psi_{D} \rangle
        \langle \psi_{D} , g \rangle
    = \sum_{D \in \mathcal{D}_A}
        \langle f, \psi_{D} \rangle
        \overline{\langle g, \psi_{D} \rangle}
    = \big \langle \mathscr{C}_{\psi} f , \mathscr{C}_{\psi} g \rangle .
  \]
  Combining both facts with the estimate \eqref{eq:pairing_seq_infty}, it follows that
  \[
    |\langle f, g \rangle | = | \langle \mathscr{C}_{\psi} f, \mathscr{C}_{\psi} g \rangle |
    \lesssim \| \mathscr{C}_{\psi} f \|_{\dot{\mathbf{f}}^0_{\infty, 1} (A)}
    \| \mathscr{C}_{\psi} g \|_{\dot{\mathbf{f}}^0_{1, \infty} (A)}
    \lesssim \| f \|_{\dot{\mathbf{F}}^0_{\infty, 1} (A)} \| g \|_{\dot{\mathbf{F}}^0_{1, \infty} (A)} .
  \]

  For the reverse inequality, first note that since $\mathscr{D}_{\psi} \circ \mathscr{C}_{\psi}$
  is the identity on $\dot{\mathbf{F}}^0_{1, \infty} (A)$, and since these operators are bounded,
  we have
  \[
    \| g \|_{\dot{\mathbf{F}}^0_{1, \infty} (A)}
    = \| \mathscr{D}_{\psi} \mathscr{C}_{\psi} g \|_{\dot{\mathbf{F}}^0_{1, \infty} (A)}
    \lesssim \| \mathscr{C}_{\psi} g \|_{\dot{\mathbf{f}}^0_{1, \infty} (A)}
    \lesssim \| g \|_{\dot{\mathbf{F}}^0_{1, \infty} (A)}
  \]
  and thus
  \(
    \| \mathscr{C}_{\psi} g \|_{\dot{\mathbf{f}}^0_{1, \infty} (A)}
    \asymp \| g \|_{\dot{\mathbf{F}}^0_{1, \infty} (A)}.
  \)
  Next, note that by \Cref{lem:norming_sequence}, there exists a sequence
  $(c^{(n)})_{n \in \N}$ in $\dot{\mathbf{f}}^0_{ \infty, 1} (A)$ with
  $\| c^{(n)} \|_{\dot{\mathbf{f}}^0_{ \infty, 1} (A)} \leq 1$ such that
  \[
    \limsup_{n \to \infty}| \langle \mathscr{C}_{\psi} g,  c^{(n)} \rangle | \asymp \|
    \mathscr{C}_{\psi} g \|_{\dot{\mathbf{f}}^0_{1, \infty} (A)}.
  \]

  Now, setting $f_n := \mathscr{D}_{\psi} c^{(n)} \in \dot{\mathbf{F}}^0_{\infty, 1} (A)$, note that
  \[
    \langle f_n, g \rangle
    = \langle \mathscr{D}_{\psi} c^{(n)}, g \rangle
    = \sum_{D \in \CalD_A}
        c_D^{(n)} \, \langle \psi_D, g \rangle
    = \sum_{D \in \CalD_A}
        c_D^{(n)} \, \overline{(\mathscr{C}_{\psi} g)_D}
    = \langle c^{(n)}, \mathscr{C}_{\psi} g \rangle
    .
  \]
  It follows that
  \[
    \limsup_{n \to \infty}
      | \langle f_n, g \rangle |
    = \limsup_{n \to \infty}
        |\langle c^{(n)}, \mathscr{C}_{\psi} g \rangle |
    = \limsup_{n \to \infty}
        | \langle \mathscr{C}_{\psi} g,  c^{(n)} \rangle |
    \asymp \| \mathscr{C}_{\psi} g \|_{\dot{\mathbf{f}}^0_{1, \infty} (A)}
    \asymp \| g \|_{\dot{\mathbf{F}}^0_{1, \infty} (A)}
    .
  \]
  Since $\mathscr{D}_{\psi} \colon \dot{\mathbf{f}}^0_{\infty, 1} (A) \to \dot{\mathbf{F}}^0_{\infty,1} (A)$
  is bounded, we have
  \(
    \|f_n\|_{\dot{\mathbf{F}}^0_{\infty,1} (A)}
    \leq C \|c^{(n)}\|_{\dot{\mathbf{f}}^0_{\infty,1} (A)}
    \leq C
  \)
  for all $n \in \N$.
  Hence, normalizing the $f_n$ by $C > 0$ if necessary yields that
  \[
    \| g \|_{\dot{\mathbf{F}}^0_{1, \infty} (A)}
    \lesssim
    \sup_{\substack{ f \in \dot{\mathbf{F}}^0_{\infty, 1} \\
        \| f \|_{\dot{\mathbf{F}}^0_{\infty, 1}} \leq 1 }} |\langle f,
    g\rangle|,
  \]
  which finishes the proof.
\end{proof}

\section*{Acknowledgements}
J.v.V. gratefully acknowledges support from the Austrian Science Fund (FWF) project J-4445 and is grateful for the hospitality and
support of the Katholische Universität Eichstätt-Ingolstadt during his visit. J.v.V. also thanks Emiel Lorist for helpful discussions on discrete Triebel-Lizorkin spaces.

\end{document}